\newtheorem{theorem}{Theorem}[section]
\newtheorem{proposition}[theorem]{Proposition}
\newtheorem{lemma}[theorem]{Lemma}
\newtheorem{remark}[theorem]{Remark}
\newtheorem{definition}[theorem]{Definition}
\newtheorem{assumption}[theorem]{Assumption}
\title{The Global Well-posedness for Master Equations of Mean Field Games of Controls}
\author{Shuhui Liu\thanks{Department of Applied Mathematics, The Hong Kong Polytechnic University, Hong Kong SAR, China 
  (\href{mail to:shuhui.liu@polyu.edu.hk}{shuhui.liu@polyu.edu.hk}).}
\and Xintian Liu\thanks{Department of Mathematics, City University of Hong Kong, Hong Kong SAR, China
  (\href{mail to:xintialiu2-c@my.cityu.edu.hk}{xintialiu2-c@my.cityu.edu.hk}).}
\and Chenchen Mou\thanks{Department of Mathematics, City University of Hong Kong, Hong Kong SAR, China 
  (\href{mail to:chencmou@cityu.edu.hk}{chencmou@cityu.edu.hk}).}
\and Defeng Sun\thanks{Department of Applied Mathematics, The Hong Kong Polytechnic University, Hong Kong SAR, China  
  (\href{mail to:defeng.sun@polyu.edu.hk}{defeng.sun@polyu.edu.hk}).}}
\date{}
\DeclareMathOperator{\E}{\mathbb{E}}
\DeclareMathOperator{\R}{\mathbb{R}}
\DeclareMathOperator{\tr}{tr}
\DeclareMathOperator{\divergence}{div}
\begin{document}
\maketitle

\begin{abstract}
In this manuscript, we establish the global well-posedness for master equations of mean field games of controls, where the interaction is through the joint law of the state and control. Our results are proved under two different conditions: the Lasry-Lions monotonicity and the displacement $\lambda$-monotonicity, both considered in their integral forms. We provide a detailed analysis of both the differential and integral versions of these monotonicity conditions for the corresponding nonseparable Hamiltonian and examine their relation. The proof of global well-posedness relies on the propagation of these monotonicity conditions in their integral forms and a priori uniform Lipschitz continuity of the solution with respect to the measure variable.
\end{abstract}

\noindent\textbf{Keywords.} Global well-posedness, mean field games of controls, master equation, Lasry-Lions monotonicity, displacement $\lambda$-monotonicity, common noise, nonseparable Hamiltonian

\noindent\textbf{MSCcodes.} 35R15, 49N80, 49Q22, 60H30, 91A16, 93E20

\section{Introduction}
The mean field games (MFG) theory was developed by Lasry-Lions \cite{lasry2006jeux-i,lasry2006jeux-ii,lasry2007mean} and Huang-Caines-Malham\'{e} \cite{huang2007large, huang2006large} independently at about the same time. The problems consider the limit behavior of differential games, with a large number of players that have little influence on the whole system symmetrically but the collective behaviors are significant. We refer to \cite{bensoussan2013mean,cardaliaguet2010notes,achdou2020introduction,carmona2018probabilistic} for materials on MFG theory. The master equation, firstly introduced in \cite{lions2007theorie}, is an infinite-dimensional hyperbolic equation stated on the space of probability measures. Provided there is a unique mean field equilibrium (MFE), the master equation can be used to characterize the minimal cost of MFG under a given background. The master equation is quite useful in studying the limit theory of MFG. The interested readers could see \cite{cardaliaguet2019master} for the convergence problem, \cite{delarue2019central} for central limit theorem and \cite{delarue2020deviation} for large deviations and concentration of measure.
\par In a classical MFG model, the interaction is only through the law of state. As a generalization, the mean field games of controls (MFGC), also referred to as extended mean field games in the earlier literature, allow the mean field interaction to include the law of control. As a consequence, the joint law of state and control may appear in the state equation and the cost functional in MFGC. Some recent studies focus on the MFGC system, which is a system of forward-backward stochastic partial differential equation (FBSPDE) related to the MFGC problem as follows:
\begin{equation}
\label{MFGC intro}
\left\{\begin{aligned}
    &d\mu_t(x)=\Big[\frac{\widehat{\beta}^2}{2}\mathrm{tr}(\partial_{xx}\mu_t(x))-\divergence\big(\mu_t(x)\partial_p H(x,\partial_x u(t,x),\rho_t)\big)\Big]\,dt-\beta\partial_x\mu_t(x)\cdot dB_t^0,\\
    &du(t,x)=-\Big[\frac{\widehat{\beta}^2}{2}\tr(\partial_{xx}u(t,x))+\beta\tr(\partial_x v(t,x))+H(x,\partial_x u(t,x),\rho_t)\Big]dt\\
    &\qquad\qquad\quad+v(t,x)\cdot dB_t^0,\\
    &\rho_t=\big(id,\partial_p H(\cdot,\partial_x u(t,\cdot),\rho_t)\big)\#\mu_t,\quad\mu_{t_0}=\mathcal{L}_{\xi},\quad u(T,x)=G(x,\mu_T).
\end{aligned}\right.
\end{equation}
Here the time horizon is $[t_0,T]$; the Hamiltonian $H:\R^d\times\R^d\times\mathcal{P}_2(\R^{2d})\rightarrow\R$ and the terminal cost $G:\R^d\times\mathcal{P}_2(\R^d)\rightarrow\R$ serve as the data of the MFGC; $\beta$ stands for the intensity of the common noise, while the intensity of the idiosyncratic noise is assumed to be $1$ (nondegenerate), and we denote $\widehat{\beta}^2:=1+\beta^2$. We refer to \cite{cardaliaguet2018mean,gomes2014existence,gomes2016extended,kobe2022} for studies on such MFGC system without common noise under Lasry-Lions monotonicity condition. Very recently, \cite{jackson2025quantitative} studies the convergence of Nash equilibria from $N$-player games to MFGC under displacement monotonicity condition. 
\par However, there are few studies on the master equation of MFGC. In this manuscript, our aim is to study the global well-posedness for the following master equation of MFGC:
\begin{equation}
\left\{\begin{aligned}
    &\mathcal{L}V(t,x,\mu):=\partial_t V+\frac{\widehat{\beta}^2}{2}\tr(\partial_{xx}V)+\widehat{H}(x,\partial_x V,\mathcal{L}_{(\xi,\partial_x V(t,\xi,\mu))})+\mathcal{M}V=0,\\
    &V(T,x,\mu)=G(x,\mu),
\end{aligned}\right.
\end{equation}
where $\xi$ is a random variable with law $\mu$, and $\mathcal{M}$ is the nonlocal operator    
\begin{equation}
\begin{aligned}
    &\mathcal{M}V(t,x,\mu)\\
    :=&\tr\int_{\R^d}\int_{\R^d}\Big[\frac{\widehat{\beta}^2}{2}\partial_{\tilde{x}\mu}V(t,x,\mu,\tilde{x})+\beta^2\partial_{x\mu}V(t,x,\mu,\tilde{x})+\frac{\beta^2}{2}\partial_{\mu\mu}V(t,x,\mu,\bar{x},\tilde{x})\\
    &+\partial_{\mu}V(t,x,\mu,\tilde{x})\cdot\partial_p\widehat{H}\big(\tilde{x},\partial_x V(t,\tilde{x},\mu),\mathcal{L}_{(\xi,\partial_{x}V(t,\xi,\mu))}\big)\Big]\,\mu(d\tilde{x})\mu(d\bar{x}).
\end{aligned}
\end{equation}
We emphasize that the $\widehat{H}$ here is not the standard Hamiltonian $H$ as in (\ref{MFGC intro}). In fact, note that the $\rho$ in (\ref{MFGC intro}) is given via $\rho=\mathcal{L}_{(\xi,\partial_p H(\xi,\eta,\rho))}$ for random variables $\xi,\eta$, which can induce a fixed-point mapping $\rho=\Phi(\mathcal{L}_{(\xi,\eta)})$ under appropriate conditions. Here, we denote $\widehat{H}(\cdot,\cdot,\rho):=H(\cdot,\cdot,\Phi(\rho))$. The derivative with respect to the measure variable is understood in the sense of the Lions derivative.
\par It is known that, in standard MFG, the monotonicity condition is crucial to guarantee both the uniqueness of the MFE and the global well-posedness of the master equation. In the literature, two main types of monotonicity conditions for the master equation have been provided: Lasry-Lions monotonicity and displacement monotonicity. The Lasry-Lions monotonicity, introduced in \cite{lions2007theorie}, was firstly used to study the global well-posedness of the master equation (see e.g. \cite{cardaliaguet2019master, chassagneux2014probabilistic}). We mention that in these results, the Hamiltonian is assumed to be separable in the momentum and measure variables. The displacement monotonicity, introduced in \cite{ahuja2016wellposedness}, is also an appropriate condition. With this monotonicity condition, the global well-posedness of master equation was shown with nonseparable Hamiltonian in \cite{mou2022displacement}. Moreover, \cite{bansil2025degenerate} presented a more general result in the lack of idiosyncratic noise. The displacement monotonicity can be further weakened to the displacement $\lambda$-monotonicity (see e.g. \cite{mou2022propagation}). In contrast to the previous two, the anti-monotonicity is imposed in the opposite direction, under which the global well-posedness of the master equation was shown in \cite{mou2022anti}. Without monotonicity conditions, multiple MFEs may exist. The interested reader can see e.g. \cite{bayraktar2020non, cecchin2022selection, cecchin2022weak, mou2024minimal} for details.
\par In \cite{mou2022displacement}, a strategy was developed for establishing the global well-posedness of the master equation in its differential form. This approach consists of three main steps:
\begin{itemize}
    \item [{Step 1:}] Assume appropriate monotonicity conditions on the data and show that the monotonicity can be propagated along any classical solution $V$ to the master equation;
    \item[Step 2:] Show that the monotonicity of the solution $V$ implies an a priori uniform Lipschitz continuity in the measure variable;
    \item[Step 3:] Conclude the global well-posedness from the local well-posedness result and the uniform Lipschitz continuity.
\end{itemize}
\par In this manuscript, we focus on the Lasry-Lions monotonicity and displacement 
$\lambda$-monotonicity in their integral forms. We mention that the propagation of monotonicity (Step 1) has been proved in \cite{mou2022propagation}, where the monotonicities were provided in their differential forms. The proof there was based on differentiating the master equation, which required higher regularity assumptions on data. Moreover, the a priori $W_1$-Lipschitz estimate of $V$ (Step 2) and the global well-posedness of master equation (Step 3) were absent there. In contrast, we first show the propagation of Lasry-Lions monotonicity and displacement 
$\lambda$-monotonicity in their integral forms via the MFGC system instead. We then prove the a priori $W_1$-Lipschitz estimate of $V$ by its obtained monotonicity in respective integral forms. As a result, we establish the global well-posedness of the master equation with less regular data. 
\par It is known that the $W_1$-Lipschitz continuity is crucial to the global well-posedness for master equation. When $V$ is assumed to be displacement $\lambda$-monotone, we first establish the $W_2$-Lipschitz continuity of $\partial_x V$ in the measure variable. Furthermore, by a pointwise representation formula of $\partial_{x\mu}V$, we show that the $W_2$-Lipschitz continuity implies the $W_1$-Lipschitz continuity of $\partial_x V$ in the measure variable, which is equivalent to the boundedness of $\partial_{x\mu}V$. In the case where $V$ is Lasry-Lions monotone, the $W_1$-Lipschitz continuity can be obtained directly. The local well-posedness result for the master equation then follows from the local well-posedness theory for the Mckean-Vlasov forward-backward stochastic differential equation (FBSDE) and its stability. Once we obtain the local well-posedness of the master equation and the uniform bound for $\partial_{x\mu}V$, we can construct the global solution by extending the local solution backward in time.
\par We summarize the main contributions of this manuscript as follows:
\begin{itemize}
    \item Using interpolation with appropriate geodesic lines, we demonstrate the relation for the nonseparable Hamiltonian between the differential and integral forms of both Lasry-Lions and displacement $\lambda$-monotonicity conditions. In particular, the differential form of Lasry-Lions monotonicity implies our new integral form so the integral form is raised as a weaker assumption. The integral and differential forms of displacement $\lambda$-monotonicity are shown to be equivalent. Furthermore, we show the propagation of these monotonicity conditions in their integral forms through the MFGC system.
    \item We prove the a priori uniform  $W_1$-Lipschitz estimate for the solution under both Lasry-Lions monotonicity and displacement $\lambda$-monotonicity conditions through the propagation of monotonicity conditions and the pointwise representation formula of the solution. This $W_1$-Lipschitz estimate serves as a crucial step towards the global well-posedness of the  master equation.
    \item We establish the global well-posedness of the master equation for MFGC with nonseparable Hamiltonian under Lasry-Lions monotonicity and displacement $\lambda$-monotonicity conditions, respectively. To our best knowledge, this is the first general global well-posedness result for MFGC master equation. 
\end{itemize}
\par The remainder of this manuscript is organized as follows. In Section 2, we present the formulation of the MFGC, the corresponding master equation, and the crucial monotonicity conditions. In section 3, we first introduce the Lasry-Lions monotonicity and displacement $\lambda$-monotonicity conditions for nonseparable Hamiltonian in their integral forms. Moreover, we examine the relation between the integral and differential forms of these monotonicity conditions. In Section 4, we investigate the propagation of monotonicity conditions through the MFGC system. Section 5 establishes the key a priori uniform Lipschitz continuity in the measure variable. Finally, in Section 6, we prove the global well-posedness of the master equation for MFGC under the monotonicity conditions.

\section{Preliminaries}
\subsection{The product probability spaces and function spaces}
We follow the setting for mean field games of controls (MFGC) in \cite{mou2022propagation}. Throughout the paper let $T$ be a fixed finite time horizon. Let $(\Omega_0,\mathbb{F}^0,\mathbb{P}_0)$ and $(\Omega_1,\mathbb{F}^1,\mathbb{P}_1)$ be two filtered probability spaces on which there are defined $d$-dimensional Brownian motions $B^0$ and $B$, respectively. For $\mathbb{F}^i=\{\mathcal{F}_t^i\}_{0\leq t\leq T}$, $i=0,1$, we assume $\mathcal{F}_t^0=\mathcal{F}_t^{B^0}$, $\mathcal{F}_t^1=\mathcal{F}_0^1\vee\mathcal{F}_t^B$, and $\mathbb{P}_1$ has no atom in $\mathcal{F}_0^1$ so it can support any measure on $\R^d$ with finite second order moment. Consider the product space
\begin{equation}
    \Omega:=\Omega_0\times\Omega_1,\quad \mathbb{F}=\{\mathcal{F}_t\}_{0\leq t\leq T}:=\{\mathcal{F}_t^0\otimes\mathcal{F}_t^1\}_{0\leq t\leq T},\quad \mathbb{P}:=\mathbb{P}_0\otimes\mathbb{P}_1,\quad \E:=\E^{\mathbb{P}}.
\end{equation}
In particular, $\mathcal{F}_t:=\sigma(A_0\times A_1: A_0\in\mathcal{F}_t^0, A_1\in\mathcal{F}_t^1)$ and $\mathbb{P}(A_0\times A_1)=\mathbb{P}_0(A_0)\mathbb{P}_1(A_1)$. We shall automatically extend $B^0,B,\mathbb{F}^0,\mathbb{F}^1$ to the product space in the obvious sense, but using the same notation. Note that $B^0$ and $B^1$ are independent $\mathbb{P}$-Brownian motions and are independent of $\mathcal{F}_0$.
\par It is convenient to introduce another filtered probability space $(\tilde{\Omega}_1,\tilde{\mathbb{F}}^1,\tilde{B},\tilde{\mathbb{P}}_1)$ in the same manner as $(\Omega_1,\mathbb{F}^1,B,\mathbb{P}_1)$, and consider the larger filtered probability space given by
\begin{equation}
\label{product space}
    \tilde{\Omega}:=\Omega\times\tilde{\Omega}_1,\quad \tilde{\mathbb{F}}=\{\tilde{\mathcal{F}}_t\}_{0\leq t\leq T}:=\{\mathcal{F}_t\otimes\tilde{\mathcal{F}}_t^1\}_{0\leq t\leq T},\quad \tilde{\mathbb{P}}:=\mathbb{P}\otimes\tilde{\mathbb{P}}_1,\quad \tilde{\E}:=\E^{\tilde{\mathbb{P}}}.
\end{equation}
Given an $\mathcal{F}_t$-measurable random variable $\xi=\xi(\omega^0,\omega^1)$, we say $\tilde{\xi}=\tilde{\xi}(\omega^0,\tilde{\omega}^1)$ is a conditionally independent copy of $\xi$ if, for each $\omega^0$, the $\mathbb{P}_1$-distribution of $\xi(\omega^0,\cdot)$ is equal to the $\tilde{\mathbb{P}}_1$-distribution of $\tilde{\xi}(\omega^0,\cdot)$. That is, conditional on $\mathcal{F}_t^0$, by extending to $\tilde{\Omega}$ the random variables $\xi$ and $\tilde{\xi}$ are conditionally independent and have the same conditional distribution under $\tilde{\mathbb{P}}$. Note that, for any appropriate deterministic function $\varphi$,
\begin{equation}
\begin{gathered}
    \tilde{\E}_{\mathcal{F}_t^0}[\varphi(\xi,\tilde{\xi})](\omega^0)=\E^{\mathbb{P}_1\otimes\tilde{\mathbb{P}}_1}\big[\varphi\big(\xi(\omega^0,\cdot),\tilde{\xi}(\omega^0,\tilde{\cdot})\big)\big],\quad\mathbb{P}_0-\text{a.e. }\omega^0;\\
    \tilde{\E}_{\mathcal{F}_t}[\varphi(\xi,\tilde{\xi})](\omega^0,\omega^1)=\E^{\tilde{\mathbb{P}}_1}\big[\varphi\big(\xi(\omega^0,\omega^1),\tilde{\xi}(\omega^0,\tilde{\cdot})\big)\big],\quad\mathbb{P}-\text{a.e. }(\omega^0,\omega^1).
\end{gathered}
\end{equation}
Here $\E^{\tilde{\mathbb{P}}_1}$ is the expectation on $\tilde{\omega}^1$, and $\E^{\mathbb{P}_1\times\tilde{\mathbb{P}}_1}$ is on $(\omega^1,\tilde{\omega}^1)$. Throughout the paper, we will use the probability space $(\Omega,\mathbb{F},\mathbb{P})$. However, when conditionally independent copies of random variables or processes are needed, we will tacitly use the extension to the larger space $(\tilde{\Omega},\tilde{\mathbb{F}},\tilde{\mathbb{P}})$ without mentioning. When we need two conditionally independent copies, we similarly define $(\bar{\tilde{\Omega}},\bar{\tilde{\mathbb{F}}},\bar{\tilde{\mathbb{P}}},\bar{\tilde{\E}})$ as (\ref{product space}).
\par For any dimension $d$ and any constant $p\geq 1$, let $\mathcal{P}(\R^d)$ denote the set of probability measures on $\R^d$, and $\mathcal{P}_p(\R^d)$ the subset of probability measures with finite $p$-th moment, equipped with the $p$-Wasserstein distance $W_p$. Moreover, for any sub-$\sigma$-algebra $\mathcal{G}\subset\mathcal{F}_T$, $\mathbb{L}^p(\mathcal{G})$ denotes the set of $\R^d$-valued, $\mathcal{G}$-measurable, and $p$-integrable random variables. For any $\mu\in\mathcal{P}_p(\R^d)$, let $\mathbb{L}^p(\mathcal{G};\mu)$ denote the set of $\xi\in\mathbb{L}^p(\mathcal{G})$ with law $\mathcal{L}_{\xi}=\mu$. Similarly, for any sub-filtration $\mathbb{G}\subset\mathbb{F}$, $\mathbb{L}(\mathbb{G};\R^d)$ denotes the set of $\mathbb{G}$-progressively measurable $\R^d$-valued processes.
\par Let $\mathcal{C}^0(\mathcal{P}_2(\R^d))$ denote the set of $W_2$-continuous functions $U:\mathcal{P}_{2}(\R^d)\rightarrow\R$. For a continuous function $U\in\mathcal{C}^0(\mathcal{P}_2(\R^d))$, we use $\partial_{\mu}U: (\mu,\tilde x)\in\mathcal{P}_2(\R^d)\times\R^d\rightarrow\R$ to denote its Lions derivative. We say $U\in \mathcal{C}^1(\mathcal{P}_2(\R^d))$ if $\partial_{\mu}U$ exists and is continuous on $\mathcal{P}_2(\R^d)\times\R^d$. We can further define the second order derivative $\partial_{\mu\mu}U:(\mu,\tilde{x},\bar{x})\in \mathcal{P}_2(\R^d)\times\R^d\times\R^d\to \R$, and we say $U\in \mathcal{C}^2(\mathcal{P}_2(\R^d))$ if $\partial_{\mu}U$, $\partial_{\tilde{x}\mu}U$ and $\partial_{\mu\mu}U$ exist and are continuous. Similarly, let $\mathcal{C}^0(\R^d\times\mathcal{P}_2(\R^d))$ denote the set of continuous functions $U:\R^d\times\mathcal{P}_2(\R^d)\rightarrow\R^d$. We say $U\in\mathcal{C}^1(\R^d\times\mathcal{P}_2(\R^d))$ if $\partial_x U,\partial_{\mu}U$ exist and are continuous. We say $U\in\mathcal{C}^2(\R^d\times\mathcal{P}_2(\R^d))$ if all the derivatives $\partial_x U,\partial_{xx}U,\partial_{\mu}U,\partial_{x\mu}U,\partial_{\tilde{x}\mu}U,\partial_{\mu\mu}U$ exist and continuous. Moreover, $\mathcal{C}_b^1(\R^d\times\mathcal{P}_2(\R^d))$ denotes the set of functions $U\in\mathcal{C}^1(\R^d\times\mathcal{P}_2(\R^d))$ with bounded $\partial_x U,\partial_{\mu}U$; $\mathcal{C}_c^1(\R^d)$ denotes the set of functions $U\in\mathcal{C}^1(\R^d)$ with compact support, and $\mathcal{C}_c^{\infty}(\R^d)$ denotes the set of smooth functions with compact support. We also use the following notations: for $R>0$, $B_R:=\{p\in\R^d:|p|\leq R\},\,B_R^o:=\{p\in\R^d:|p|<R\}$.
\par Given $t_0\in[0,T]$, denote $B_t^{t_0}:=B_t-B_{t_0},\,B_t^{0,t_0}:=B_t^0-B_{t_0}^0,\,t\in[t_0,T]$. Let $\mathcal{A}_{t_0}$ denote the set of admissible controls $\alpha:[t_0,T]\times\R^d\times \mathcal{C}([t_0,T];\R^d)\rightarrow\R^d$ which are progressively measurable and adapted in the path variable and square integrable. Let $\mathbb{L}^2(\mathbb{F}^{B^{0,t_0}};\mathcal{P}_2(\R^{2d}))$ denote the set of $\mathbb{F}^{B^{0,t_0}}$-progressively measurable stochastic probability measure flows $\{\nu.\}=\{\nu_t\}_{t\in[t_0,T]}\subset\mathcal{P}_2(\R^{2d})$.
\subsection{Mean field games of controls}
The data of our MFGC is given by
\[
b:\R^{2d}\times\mathcal{P}_2(\R^{2d})\rightarrow\R^d;\quad f:\R^{2d}\times\mathcal{P}_2(\R^{2d})\rightarrow\R;\quad G:\R^d\times\mathcal{P}_2(\R^d)\rightarrow\R
\]
and $\beta\in[0,\infty)$. For simplicity, we assume that $b$ and $f$ do not depend on time.
\par Given $t_0\in[0,T]$, $x\in\R^d$, $\alpha\in\mathcal{A}_{t_0}$, and $\{\nu.\}\in\mathbb{L}^2(\mathbb{F}^{B^{0,t_0}};\mathcal{P}_2(\R^{2d}))$, the state of the agent satisfies the following controlled stochastic differential equation (SDE) on $[t_0,T]$:
\begin{equation}\label{controlled SDE}
\begin{gathered}
    X_t^{\{\nu.\},\alpha}=x+\int_{t_0}^t b(X_s^{\{\nu.\},\alpha},\alpha_s,\nu_s)\,ds+B_t^{t_0}+\beta B_t^{0,t_0};\\
    \text{where }X^{\{\nu.\},\alpha}=X^{t_0,\{\nu.\};x,\alpha},\quad \alpha_t:=\alpha(t,X_t^{\{\nu.\},\alpha},B_{[t_0,t]}^{0,t_0}).
\end{gathered}
\end{equation}
Let $\pi_1\#\nu_T$ denote the first $\R^d$ marginal measure of $\nu_t$. The expected cost is given by
\begin{equation}
    J(t_0,x;\{\nu.\},\alpha):=\E\Big[G(X_T^{\{\nu.\},\alpha},\pi_1\#\nu_T)+\int_{t_0}^T f(X_t^{\{\nu.\},\alpha},\alpha_t,\nu_t)\,dt\Big].
\end{equation}
\begin{definition}
\label{MFE}
    For any $(t,\mu)\in[0,T]\times\mathcal{P}_2(\R^d)$, we say $(\alpha^*,\{\nu.^*\})\in\mathcal{A}_t\times\mathbb{L}^2(\mathbb{F}^{B^{0,t}};\mathcal{P}_2(\R^{2d}))$ is a mean field equilibrium (MFE) at $(t,\mu)$ if
    \begin{equation}
    \begin{gathered}
        J(t,x;\{\nu.^*\},\alpha^*)=\inf_{\alpha\in\mathcal{A}_t}J(t,x;\{\nu.^*\},\alpha),\quad\text{for }\mu-\text{a.e. }x\in\R^d;\\
        \pi_1\#\nu_t^*=\mu,\quad \nu_s^*:=\mathcal{L}_{(X_s^*,\alpha^*(s,X_s^*,B_{[t,s]}^{0,t}))|\mathcal{F}_s^0},\quad \text{where}\\
        X_s^*=\xi+\int_t^s b(X_r^*,\alpha^*(r,X_r^*,B_{[t,r]}^{0,t}),\nu_r^*)\,dr+B_s^t+\beta B_s^{0,t},\quad\xi\in\mathbb{L}^2(\mathcal{F}_t^1,\mu).
    \end{gathered}
    \end{equation}
\end{definition}
\par When there is a unique MFE for each $(t,\mu)\in[0,T]\times\mathcal{P}_2(\R^d)$, denoted as $(\alpha^*(t,\mu;\cdot),\{\nu.^*(t,\mu)\})$, then the game problem leads to the following value function for the agent:
\begin{equation}\label{value function}
    V(t,x,\mu):=J(t,x;\{\nu.^*(t,\mu)\},\alpha^*(t,\mu;\cdot))\quad\text{for any }x\in\R^d.
\end{equation}
Our goal is to study the master equation which is formally satisfied by the value function $V(t,x,\mu)$. We define the Hamiltonian: for $(x,p,\nu)\in\R^d\times\R^d\times\mathcal{P}_2(\R^{2d})$,
\begin{equation}
    H(x,p,\nu):=\inf_{a\in\R^d}h(x,p,\nu,a),\quad h(x,p,\nu,a):=p\cdot b(x,a,\nu)+f(x,a,\nu).
\end{equation}
\begin{assumption}
\label{fixed point}
    (i) The Hamiltonian $H$ has a unique minimizer $\alpha^*=\phi(x,p,\nu)$, namely
    \begin{equation}
        H(x,p,\nu)=h(x,p,\nu,\phi(x,p,\nu)).
    \end{equation}
    (ii) For any $\xi\in\mathbb{L}^2(\mathcal{F}_0^1)$ and $\eta\in\mathbb{L}^2(\sigma(\xi))$, the following mapping on $\mathcal{P}_2(\R^{2d})$:
    \begin{equation}
        \mathcal{I}^{\xi,\eta}(\nu):=\mathcal{L}_{(\xi,\phi(\xi,\eta,\nu))}
    \end{equation}
    has a unique fixed point $\nu^*:\mathcal{I}^{\xi,\eta}(\nu^*)=\nu^*$, and we shall denote it as $\Phi(\mathcal{L}_{(\xi,\eta)})$.
\end{assumption}

\subsection{The master equation and the MFGC system}
From Assumption \ref{fixed point}, we can denote
\begin{equation}
    \widehat{H}(x,p,\rho):=H(x,p,\Phi(\rho)),\quad (x,p,\rho)\in\R^d\times\R^d\times\mathcal{P}_2(\R^{2d}).
\end{equation}
Note that $\Phi$ is a measure valued functions and the associated derivatives should be understood in the sense of \cite[Section 2.1]{mou2022propagation}.
\par By the fixed point argument, from Definition \ref{MFE} we have the McKean-Vlasov SDE
\begin{equation}
\label{SDE}
\begin{gathered}
    X_t^{\xi}=\xi+\int_{t_0}^t\partial_p\widehat{H}\big(X_s^{\xi},\partial_x V(s,X_s^{\xi},\mu_s),\rho_s\big)\,ds+B_t^{t_0}+\beta B_t^{0,t_0},\\
    \text{where }\mu_s:=\mathcal{L}_{X_s^{\xi}|\mathcal{F}_s^0},\quad \rho_s:=\mathcal{L}_{(X_s^{\xi},\partial_x V(s,X_s^{\xi},\mu_s))|\mathcal{F}_s^0}.
\end{gathered}
\end{equation}
On the other hand, it follows from the standard stochastic control theory that, for given $t_0,\mu$, the optimization in Definition \ref{MFE} is associated with the following backward stochastic differential equation (BSDE):
\begin{equation}
\label{BSDE}
\begin{gathered}
    Y_t^{\xi}=G(X_T^{\xi},\mu_T)-\int_t^T Z_s^{\xi}\cdot dB_s-\int_t^T Z_s^{0,{\xi}}\cdot dB_s^0\\
    +\int_t^T\big[\widehat{H}(\cdot)-\partial_x V(s,X_s^{\xi},\mu_s)\cdot\partial_p\widehat{H}(\cdot)\big]\big(X_s^{\xi},\partial_x V(s,X_s^{\xi},\mu_s),\rho_s\big)\,ds.
    \end{gathered}
\end{equation}
The above SDE (\ref{SDE}) and BSDE (\ref{BSDE}) form the forward-backward stochastic differential equation (FBSDE) system for the MFGC (\ref{controlled SDE})-(\ref{value function}). In particular, we have
\begin{equation}
    Y_t^{\xi}=V(t,X_t^{\xi},\mu_t),\quad Z_t^{\xi}=\partial_x V(t,X_t^{\xi},\mu_t).
\end{equation}
\par Applying It\^{o}'s formula to $V(t,X_t^{\xi},\mu_t)$ and then comparing with the BSDE (\ref{BSDE}), we can derive the master equation: for independent copies $\xi,\tilde{\xi},\bar{\xi}$ with law $\mu$,
\begin{equation}
\label{master equation}
\begin{gathered}
    \mathcal{L}V(t,x,\mu):=\partial_t V+\frac{\widehat{\beta}^2}{2}\tr(\partial_{xx}V)+\widehat{H}(x,\partial_x V,\mathcal{L}_{(\xi,\partial_x V(t,\xi,\mu))})+\mathcal{M}V=0,\\
    V(T,x,\mu)=G(x,\mu),\quad\text{where}\\
    \mathcal{M}V(t,x,\mu):=\tr\Big(\bar{\tilde{\E}}\Big[\frac{\widehat{\beta}^2}{2}\partial_{\tilde{x}\mu}V(t,x,\mu,\tilde{\xi})+\beta^2\partial_{x\mu}V(t,x,\mu,\tilde{\xi})+\frac{\beta^2}{2}\partial_{\mu\mu}V(t,x,\mu,\bar{\xi},\tilde{\xi})\\
    +\partial_{\mu}V(t,x,\mu,\tilde{\xi})\cdot\partial_p\widehat{H}\big(\tilde{\xi},\partial_x V(t,\tilde{\xi},\mu),\mathcal{L}_{(\xi,\partial_{x}V(t,\xi,\mu))}\big)\Big]\Big),\quad\text{and }\widehat{\beta}^2:=1+\beta^2.
\end{gathered}
\end{equation}
\begin{definition}
    \label{classical solution}
    We say $V:[0,T]\times\R^d\times\mathcal{P}_2(\R^d)$ is a classical solution to the master equation if $V$ satisfies the master equation (\ref{master equation}) pointwisely, and all the derivatives involved in the equation exist and are continuous.
\end{definition}
Note that the assumption that $V$ is a classical solution already implies the regularities of $G$. Hence we may not include Assumption \ref{G regularity} again when introducing the a priori estimates later.
\par Given the $\rho$ in the FBSDE system (\ref{SDE})-(\ref{BSDE}), we can further consider the following decoupled FBSDE on $[t_0,T]$:
\begin{equation}
\label{decoupled FBSDE}
\left\{\begin{aligned}
    &X_t^x=x+B_t^{t_0}+\beta B_t^{0,t_0},\\
    &Y_t^{x,\xi}=G(X_T^x,\mu_T)+\int_t^T\widehat{H}(X_s^x,Z_s^{x,\xi},\rho_s)ds-\int_t^T Z_s^{x,\xi}\cdot dB_s-\int_t^T Z_s^{0,x,\xi}\cdot dB_s^0.
\end{aligned}\right.
\end{equation}
The above FBSDE connects to the master equation by
\begin{equation}
    Y_t^{x,\xi}=V(t,X_t^x,\mu_t),\quad Z_t^{x,\xi}=\partial_x V(t,X_t^x,\mu_t).
\end{equation}
\par Equivalently, one may also consider the following forward-backward stochastic partial differential equation (FBSPDE) known as the MFGC system, with the relation $u(t,x)=V(t,x,\mu_t)$:
\begin{equation}
\label{FBSPDE}
\left\{\begin{aligned}
    &d\mu_t(x)=\Big[\frac{\widehat{\beta}^2}{2}\mathrm{tr}(\partial_{xx}\mu_t(x))-\divergence\big(\mu_t(x)\partial_p \widehat{H}(x,\partial_x u(t,x),\rho_t)\big)\Big]\,dt-\beta\partial_x\mu_t(x)\cdot dB_t^0,\\
    &du(t,x)=-\Big[\frac{\widehat{\beta}^2}{2}\tr(\partial_{xx}u(t,x))+\beta\tr(\partial_x v(t,x))+\widehat{H}(x,\partial_x u(t,x),\rho_t)\Big]dt\\
    &\qquad\qquad\quad+v(t,x)\cdot dB_t^0,\\
    &\rho_t=\big(id,\partial_x u(t,\cdot)\big)\#\mu_t,\quad\mu_{t_0}=\mathcal{L}_{\xi},\quad u(T,x)=G(x,\mu_T).
\end{aligned}\right.
\end{equation}
\subsection{Monotonicities and main assumptions}
\par Next we introduce the two types of monotonicity conditions: Lasry-Lions monotonicity and displacement $\lambda$-monotonicity in their integral forms. We call them integral forms because the inequalities (\ref{LL monotone}) and (\ref{disp monotone}) written by expectation can be easily rewritten by integral.
\begin{definition}
    Assume $U\in\mathcal{C}^0(\R^d\times\mathcal{P}_2(\R^d))$. We say $U$ is Lasry-Lions monotone if
    \begin{equation}
    \label{LL monotone}
        \E\big[U(\xi^1,\mathcal{L}_{\xi^1})+U(\xi^2,\mathcal{L}_{\xi^2})-U(\xi^1,\mathcal{L}_{\xi^2})-U(\xi^2,\mathcal{L}_{\xi^1})\big]\geq 0,\quad\forall\xi^1,\xi^2\in\mathbb{L}^2(\mathcal{F}_T^1).
    \end{equation}
\end{definition}

\begin{definition}
    Assume $U,\partial_x U\in\mathcal{C}^0(\R^d\times\mathcal{P}_2(\R^d))$. For any $\lambda\geq 0$, we say $U$ is displacement $\lambda$-monotone if
    \begin{equation}
    \label{disp monotone}
        \E\big[\langle\partial_x U(\xi^1,\mathcal{L}_{\xi^1})-\partial_x U(\xi^2,\mathcal{L}_{\xi^2}),\xi^1-\xi^2\rangle+\lambda |\xi^1-\xi^2|^2\big]\geq 0,\quad\forall\xi^1,\xi^2\in\mathbb{L}^2(\mathcal{F}_T^1).
    \end{equation}
    In particular, we say $U$ is displacement monotone when $\lambda=0$, and displacement semi-monotone if it is displacement $\lambda$-monotone for some $\lambda>0$.
\end{definition}
\begin{remark}
    (i) When $U,\partial_x U\in C^1(\R^d\times \mathcal{P}_2(\R^d))$, the equivalent differential \\forms for Lasry-Lions monotonicity (\ref{LL monotone}) and displacement monotonicity (\ref{disp monotone}) with $\lambda=0$ are given in \cite[Section 2]{mou2022displacement}. They are
    \begin{equation}
        \tilde{\E}\big[\langle\partial_{x\mu}U(\xi,\mathcal{L}_{\xi},\tilde{\xi})\tilde{\eta},\eta\rangle\big]\geq 0,\quad\forall\xi,\eta\in\mathbb{L}^2(\mathcal{F}_T^1),
    \end{equation}
    and
    \begin{equation}
        \tilde{\E}\big[\langle\partial_{x\mu}U(\xi,\mathcal{L}_{\xi},\tilde{\xi})\tilde{\eta},\eta\rangle+\langle\partial_{xx}U(\xi,\mathcal{L}_{\xi})\eta,\eta\rangle\big]\geq 0,\quad\forall\xi,\eta\in\mathbb{L}^2(\mathcal{F}_T^1).
    \end{equation}
    For $\lambda\geq 0$, the displacement $\lambda$-monotonicity (\ref{disp monotone}) can be easily shown to be equivalent to
    \begin{equation}
        \tilde{\E}\big[\langle\partial_{x\mu}U(\xi,\mathcal{L}_{\xi},\tilde{\xi})\tilde{\eta},\eta\rangle+\langle\partial_{xx}U(\xi,\mathcal{L}_{\xi})\eta,\eta\rangle+\lambda|\eta|^2\big]\geq 0,\quad\forall\xi,\eta\in\mathbb{L}^2(\mathcal{F}_T^1).
    \end{equation}
    (ii) It is clear that the displacement monotonicity implies the displacement semi-monotonicity. Moreover, if  $\partial_{xx}U$ is bounded, Lasry-Lions monotonicity also implies displacement semi-monotonicity. 
\end{remark}

\par Throughout this paper, we fix a constant $\lambda\geq 0$, and establish the global well-posedness of the master equation under both Lasry-Lions monotonicity and displacement $\lambda$-monotonicity conditions, respectively. We collect the following assumptions on $G$ and $\widehat{H}$:
\begin{assumption}
\label{G regularity}
    $G\in\mathcal{C}^2(\R^d\times\mathcal{P}_2(\R^d))$ and there exist constants $L_x^G,L_{\mu}^G$ such that
    \[|\partial_x G|,|\partial_{xx}G|\leq L_{x}^G,\quad|\partial_{x\mu}G|\leq L_{\mu}^G.\]
\end{assumption}
\begin{remark}\label{G regularity remark}
    In the above assumption, when $G\in\mathcal{C}^2(\R^d\times\mathcal{P}_2(\R^d))$, the uniform boundedness $|\partial_{x\mu} G|\leq L_{\mu}^G$ is equivalent to the $W_1$-Lipschitz continuity of $\partial_x G$ in $\mu$, with the uniform $W_1$-Lipschitz constant $L_{\mu}^G$. This implies further the $W_2$-Lipschitz continuity of $\partial_x G$ in $\mu$, and we denote this uniform $W_2$-Lipschitz constant by $\tilde{L}_{\mu}^G\leq L_{\mu}^G$.
\end{remark}
\begin{assumption}
\label{H regularity}
    (i) $\widehat{H}\in\mathcal{C}^2(\R^d\times\R^d\times\mathcal{P}_2(\R^{2d}))$, and for any $R>0$, there exists $L^H(R)$ such that
    \[
    \begin{gathered}
        |\partial_x\widehat{H}|,|\partial_p\widehat{H}|,|\partial_{xp}\widehat{H}|,|\partial_{xx}\widehat{H}|,|\partial_{pp}\widehat{H}|\leq L^H(R)\quad\text{on }\mathbb{R}^d\times B_R\times\mathcal{P}_2(\mathbb{R}^{2d}),\\
        |\partial_{\rho}\widehat{H}|,|\partial_{x\rho}\widehat{H}|,|\partial_{p\rho}\widehat{H}|\leq L^H(R)\quad\text{on }\mathbb{R}^d\times B_R\times\mathcal{P}_2(\mathbb{R}^{2d})\times\mathbb{R}^{2d}.
    \end{gathered}
    \]

    (ii) There exists $C_0>0$ such that  
    \[
    |\partial_x\widehat{H}(x,p,\rho)|\leq C_0(1 +|p|)\quad\text{for any } (x,p,\rho)\in\R^d\times\R^d\times\mathcal{P}_2(\R^d).
    \]
    (iii) There exist constants $c_0>c_1>0$ such that $\partial_{pp}\widehat{H}\leq-c_0 I_d$ and $|\partial_{p\rho_2}\widehat{H}|\leq c_1$, where $I_d$ denotes the $d\times d$ identity matrix. We denote $C_1:=c_0-c_1>0$. Here,
    \[\partial_\rho\widehat{H}(x,p,\rho,\tilde{x},\tilde{p})=\big(\partial_{\rho_1}\widehat{H}(x,p,\rho,\tilde{x},\tilde{p}),\partial_{\rho_2}\widehat{H}(x,p,\rho,\tilde{x},\tilde{p})\big)\in\R^d\times\R^d.\]
\end{assumption}
\begin{remark}\label{remark H regularity}
(i) In the above Assumption \ref{H regularity}(i), we only require the local Lipschitz continuity of $\widehat{H}$ with respect to $p$. This means that we allow $\widehat{H}$ to have arbitrary order growth in $p$.\\
(ii) In a standard MFG, we only need to make assumptions on $\partial_{pp}H$ for the concavity. However, in our MFGC problem, $\partial_{p\rho_2}\widehat{H}$ contributes to the concavity of $\widehat{H}$ together with $\partial_{pp}\widehat{H}$. To see this, we compute for any $\xi,\eta^1,\eta^2\in\mathbb{L}^2(\mathcal{F}_T^1)$,
\[
\begin{aligned}
    &\E\Big[\big\langle\partial_p\widehat{H}(\xi,\eta^1,\mathcal{L}_{(\xi,\eta^1)})-\partial_p\widehat{H}(\xi,\eta^2,\mathcal{L}_{(\xi,\eta^2)}),\eta^1-\eta^2\big\rangle\Big]\\
    \leq& -c_0\E[|\eta^1-\eta^2|^2]+c_1|\E[\eta^1-\eta^2]||\tilde{\E}[\tilde{\eta}^1-\tilde{\eta}^2]|\\
    \leq& -C_1\E[|\eta^1-\eta^2|^2],
\end{aligned}
\]
where $c_0,c_1,C_1$ are defined in Assumption \ref{H regularity}(iii).
\end{remark}

For the reader's convenience, we conclude this section by presenting the following estimate, which relates to the a priori regularity estimate of $V$ in $x$.
\begin{proposition}[{\cite[Proposition 6.1]{mou2022displacement}}]
\label{Vxx bounded}
    Let Assumptions \ref{G regularity}, \ref{H regularity}(i)(ii) hold and $\rho:[0,T]\times \Omega\to\mathcal{P}_2(\R^{2d})$ be $\mathbb{F}^0$-progressively measurable with
    \[
    \sup_{t\in[0,T]}\E[\int_{\mathbb{R}^{2d}}|(x,p)|^2\,\rho_t(dx,dp)]<+\infty:
    \]
    (i) For any $x\in\mathbb{R}^d$ and for the $X^x$ in (\ref{decoupled FBSDE}), the following BSDE on $[t_0,T]$ has a unique solution with bounded $Z^x$:
	\begin{equation}
        Y_t^x=G(X_T^x,\mu_T)+\int_t^T\widehat{H}(X_s^x,Z_s^x,\rho_s)\,ds-\int_t^T Z_s^x\cdot dB_s-\int_t^T Z_s^{0,x}\cdot dB_s^0,
	\end{equation}
	where $\pi_1\#\rho=\mu$.\\
    (ii) Denote $u(t_0,x)=Y_{t_0}^x$. Then there exists $L_x^V>0$, depending only on $d,T$, $L_x^G$ in Assumption \ref{G regularity}, $L^H$ in Assumption \ref{H regularity}(i), and $C_0$ in Assumption \ref{H regularity}(ii), such that
    \[
    |\partial_x u(t_0,x)|,|\partial_{xx}u(t_0,x)|\leq L_x^V.
    \]
\end{proposition}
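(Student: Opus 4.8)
The plan is to prove (i) and (ii) together through a single a priori estimate, since the $L^\infty$-bound on $Z^x$ is nothing but the gradient bound $|\partial_x u|\le L_x^V$ in disguise. The only genuine obstruction is that Assumption~\ref{H regularity}(i) permits $\widehat{H}$ to grow arbitrarily fast in $p$, so the BSDE in (\ref{decoupled FBSDE}) is neither Lipschitz nor quadratic and no off-the-shelf existence theory applies. I would circumvent this by truncation: for $R>0$ replace $\widehat{H}$ by a smooth $\widehat{H}^R$ that coincides with $\widehat{H}$ on $\mathbb{R}^d\times B_R\times\mathcal{P}_2(\mathbb{R}^{2d})$, is globally Lipschitz in $(x,p)$ (with constant controlled by $L^H(R)$), and still obeys $|\partial_x\widehat{H}^R(x,p,\rho)|\le C_0(1+|p|)$ everywhere — e.g.\ by composing the $p$-argument with a radial cutoff onto $B_R$. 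For the truncated equation, classical Lipschitz-BSDE theory yields a unique solution $(Y^{R,x},Z^{R,x},Z^{0,R,x})$, and the nonlinear Feynman--Kac formula identifies $u^R(t,x):=Y^{R,x}_t$ as the pathwise solution of the associated backward equation with $u^R\in C^1$ in $x$ and $Z^{R,x}_s=\partial_x u^R(s,X^x_s)$.

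The core step is an $R$-independent bound on $\partial_x u^R$. As $\partial_x X^x_s\equiv I_d$, differentiating the truncated BSDE in $x$ shows $(\nabla Y^{R,x},\nabla Z^{R,x},\nabla Z^{0,R,x})$ solves a linear BSDE with terminal value $\partial_x G(X^x_T,\mu_T)$, bounded driver coefficient $\partial_p\widehat{H}^R(X^x_s,Z^{R,x}_s,\rho_s)$, and source $\partial_x\widehat{H}^R(X^x_s,Z^{R,x}_s,\rho_s)$. Since the driver coefficient is bounded (by $L^H(R)$), a Girsanov change of measure $\mathbb{Q}$ removes the $\partial_p\widehat{H}^R\cdot\nabla Z$ term and gives
\[
\nabla Y^{R,x}_{t}=\E^{\mathbb{Q}}\Big[\partial_x G(X^x_T,\mu_T)+\int_{t}^{T}\partial_x\widehat{H}^R(X^x_s,Z^{R,x}_s,\rho_s)\,ds\,\Big|\,\mathcal{F}_{t}\Big].
\]
Using $|\partial_x G|\le L_x^G$, $|\partial_x\widehat{H}^R|\le C_0(1+|p|)$, and $|Z^{R,x}_s|=|\partial_x u^R(s,X^x_s)|\le m^R(s):=\sup_y|\partial_x u^R(s,y)|$, and invoking the flow property so that the same bound holds from any initial time, one obtains the functional Gr\"onwall inequality $m^R(t)\le L_x^G+C_0\int_t^T(1+m^R(s))\,ds$, hence $m^R(t)\le(L_x^G+C_0T)e^{C_0T}=:L_x^V$, independently of $R$. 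Choosing $R:=L_x^V+1$ forces $\|Z^{R,x}\|_\infty\le L_x^V<R$, so $\widehat{H}^R$ agrees with $\widehat{H}$ along the solution and $(Y^{R,x},Z^{R,x},Z^{0,R,x})$ solves the original BSDE with bounded $Z$ and $|\partial_x u|\le L_x^V$; uniqueness follows because any solution with bounded $Z$ solves the $\widehat{H}^{R'}$-equation for $R'$ exceeding its $Z$-bound, where uniqueness is classical. This gives (i) and the first half of (ii).

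For the bound on $\partial_{xx}u$ one should not differentiate twice directly, since this produces the term $\partial_{pp}\widehat{H}\,(\partial_{xx}u)^{\otimes 2}$, quadratic in $\partial_{xx}u$, against which Gr\"onwall need not close on a fixed horizon — and note that the concavity Assumption~\ref{H regularity}(iii) is deliberately not available here. Instead I would use the smoothing of the nondegenerate forward dynamics: $X^x_s=x+(B_s-B_{t_0})+\beta(B^0_s-B^0_{t_0})$, where the idiosyncratic increment $B_s-B_{t_0}$ is Gaussian of variance $(s-t_0)I_d$ and independent of $(\mathcal{F}_{t_0},B^0,\rho)$, so a Bismut--Elworthy--Li integration by parts differentiates a bounded function of $X^x_s$ in $x$ at the cost of a weight of size $O((s-t_0)^{-1/2})$. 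Writing $u(t,x)=\E[G(X^x_T,\mu_T)+\int_t^T g_s(X^x_s)\,ds\mid\mathcal{F}_t]$ with $g_s(y):=\widehat{H}(y,\partial_x u(s,y),\rho_s)$, so that $\partial_x u(t,x)=\E[\partial_x G(X^x_T,\mu_T)\mid\mathcal{F}_t]+\int_t^T\E[\partial_y g_s(X^x_s)\mid\mathcal{F}_t]\,ds$ with $\partial_y g_s=\partial_x\widehat{H}+\partial_p\widehat{H}\,\partial_{xx}u$, a further $x$-derivative gives the bounded terminal term $\E[\partial_{xx}G\mid\mathcal{F}_t]$ (at most $L_x^G$) plus running terms to which one applies the Bismut identity to the bounded function $\partial_y g_s$; with $\|\partial_y g_s\|_\infty\le C_0(1+L_x^V)+L^H(L_x^V)\,n(s)$ and $n(s):=\sup_y|\partial_{xx}u(s,y)|$ this yields the weakly singular inequality
\[
n(t)\le L_x^G+C\int_t^T\frac{1+n(s)}{\sqrt{s-t}}\,ds,
\]
whose kernel is integrable, so the generalized Gr\"onwall lemma bounds $n$ by a constant depending only on $d,T,L_x^G,L^H,C_0$ (through $L_x^V$ and $L^H(L_x^V)$). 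To make this rigorous one runs the argument first on the truncated, genuinely $C^2$ problem, where the derivative/expectation interchanges and the integration by parts are licit, and then transfers the $R$-uniform bound to the original equation as above.

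The step I expect to be the main obstacle is the a priori $L^\infty$-bound on $Z^x$, obtained insensitively to the $p$-growth of $\widehat{H}$: this is what forces the Girsanov linearization (which crucially never involves $\partial_{xx}u$) together with the truncation bootstrap, and keeping the final constants free of $L^H$ evaluated at the radius $R$ that the argument itself selects requires the truncation to preserve the linear-in-$p$ bound on $\partial_x\widehat{H}^R$ exactly. The second-derivative estimate is then comparatively routine, once one commits to Bismut--Elworthy--Li, modulo the weakly singular Gr\"onwall and the justification of the integration by parts under the common noise.
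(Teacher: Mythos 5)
The paper gives no proof of Proposition \ref{Vxx bounded} at all: it is imported verbatim from \cite[Proposition 6.1]{mou2022displacement}, so there is nothing internal to compare against. Your reconstruction is sound and is in the same spirit as the cited argument: the decisive structural points — truncating $\widehat H$ in $p$ so that the linear-growth bound $|\partial_x\widehat H^R|\le C_0(1+|p|)$ is preserved exactly, obtaining the $R$-uniform gradient bound from the linearized BSDE via Girsanov (so that only $L_x^G$ and $C_0$ enter), and then refusing to differentiate twice, instead exploiting the nondegenerate additive noise of $X^x$ to shift one derivative onto the Gaussian kernel (Bismut--Elworthy--Li) and closing with a weakly singular Gr\"onwall inequality — are precisely what makes the constant $L_x^V$ depend only on $d,T,L_x^G,L^H,C_0$ and not on the truncation radius, and your observation that a direct second differentiation produces a Riccati-type term $\partial_{pp}\widehat H(\partial_{xx}u)^{\otimes2}$ that cannot be closed without Assumption \ref{H regularity}(iii) is the correct reason for taking this route. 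The one point you use implicitly and should make explicit is the decoupling-field identification $Z^{R,x}_s=\partial_x u^R(s,X^x_s)$ together with the fact that $u^R(s,\cdot)$ (hence $g_s$) is an $\mathcal F^0_s$-measurable random field: this is needed both to phrase the Gr\"onwall inequality in terms of $m^R(s)=\operatorname{ess\,sup}_{\omega^0}\sup_y|\partial_x u^R(s,y)|$ and, in the Bismut--Elworthy--Li step, to freeze $g_s$ by conditioning on the common noise so that only the independent increment $B_s-B_t$ is smoothed; on the truncated, globally Lipschitz problem this is standard, and your strategy of proving all estimates there and transferring them by choosing $R>L_x^V$ is the right way to handle the arbitrary $p$-growth. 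With these routine justifications spelled out, your argument establishes both (i) and (ii) with constants of exactly the stated dependence.
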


\section{Relation between forms of monotonicities for the Hamiltonian}
In this section, we introduce the integral forms of Lasry-Lions monotonicity and displacement $\lambda$-monotonicity conditions on the Hamiltonian. We first show that the differential form of Lasry-Lions monotonicity implies its integral form. We further show by a remark that the integral form is indeed a weaker requirement. Finally, the integral and differential forms of displacement $\lambda$-monotonicity condition on $\widehat{H}$ are shown to be equivalent.
\par We first give the Lasry-Lions monotonicity condition on $\widehat{H}$ in the integral form below.
\begin{assumption}
\label{assumption H LL}
    For any $\varphi(x,\mu)\in\mathcal{C}^1(\R^d\times\mathcal{P}_2(\R^d);\R^d)$ with bounded $\varphi,\partial_x\varphi$ and $\mu^1,\mu^2\in\mathcal{P}_2(\R^d)$,
    \begin{equation}
    \label{H LL int}
    \begin{aligned}
        &\int_{\R^d}\big(\widehat{H}(x,\varphi(x,\mu^1),\rho^1)-\widehat{H}(x,\varphi(x,\mu^2),\rho^2)\big)(\mu^1-\mu^2)(dx)\\
        &-\int_{\R^d}\big(\varphi(x,\mu^1)-\varphi(x,\mu^2)\big)\cdot\big(\partial_p\widehat{H}(x,\varphi(x,\mu^1),\rho^1)\mu^1(dx)\\
        &\qquad\qquad-\partial_p\widehat{H}(x,\varphi(x,\mu^2),\rho^2)\mu^2(dx)\big)\geq 0,
    \end{aligned}
    \end{equation} 
    where $\rho^i=\big(id,\varphi(\cdot,\mu^i)\big)\#\mu^i,\,i=1,2$.
\end{assumption}
Next we recall the following Lasry-Lions monotonicity condition on $\widehat{H}$ in the differential form from \cite{mou2022propagation}.

\begin{assumption}\label{assumption H LL diff}
    For any $\xi,\eta,\gamma,\zeta\in\mathbb{L}^2(\mathcal{F}_T^1)$ and Lipschitz continuous $\varphi:\R^d\rightarrow\R^d$,
    \begin{equation}
    \label{H LL diff}
    \begin{aligned}
        &\tilde{\E}\Big[\big\langle\zeta,\widehat{H}_{pp}(\xi)\zeta\big\rangle-\big\langle\eta,\widehat{H}_{x\rho_1}(\xi,\tilde{\xi})\tilde{\eta}+\widehat{H}_{x\rho_2}(\xi,\tilde{\xi})[\tilde{\gamma}+\tilde{\zeta}]\big\rangle\\
        &-\big\langle\gamma-\zeta,\widehat{H}_{p\rho_1}(\xi,\tilde{\xi})\tilde{\eta}+\widehat{H}_{p\rho_2}(\xi,\tilde{\xi})[\tilde{\gamma}+\tilde{\zeta}]\big\rangle\Big]\leq 0,\quad\text{where}\\
    \end{aligned}
    \end{equation}
    $\widehat{H}_{pp}(x):=\partial_{pp}\widehat{H}\big(x,\varphi(x),\mathcal{L}_{(\xi,\varphi(\xi))}\big)$, $\widehat{H}_{x\rho}
(x,\tilde{x}):=\partial_{x\rho}\widehat{H}\big(x,\varphi(x),\mathcal{L}_{(\xi,\varphi({\xi}))},\tilde{x},\varphi(\tilde{x})\big)$, and similarly for $\widehat{H}_{p\rho}(x,\tilde{x})$.
\end{assumption}
To show that (\ref{H LL diff}) implies (\ref{H LL int}), we need the following chain rule:
\begin{lemma}
\label{chain rule}
    Let $\{\mu_t\}_{t\in[0,1]}\subset\mathcal{P}_2(\R^d)$ be a continuous measure flow satisfying $\sup_{t\in[0,1]}\int_{\R^d}|x|^2\mu_t(dx)<\infty$. Let $v_t\in L^{\infty}([0,1]\times\R^d;\R^d)$ be a velocity field for $\mu_t$, that is, $v_t$ is the unique solution to the equation $\partial_t\mu_t+\divergence(\mu_t v_t)=0$ in the sense of distribution. Let $\varphi_t(x)\in\mathcal{C}_b^1([0,1]\times\R^d;\R^d)$. Define the measure flow $\rho_t:=\varphi_t(\cdot)\#\mu_t$. Then, for any function $f\in\mathcal{C}_b^1(\mathcal{P}_2(\R^d))$, we have the chain rule:
    \begin{equation}
        \frac{d}{dt}f(\rho_t)=\int_{\R^d}\partial_{\mu}f\big(\rho_t,\varphi_t(x)\big)\cdot\big[\partial_x\varphi_t(x)v_t(x)+\partial_t\varphi_t(x)\big]\mu_t(dx).
    \end{equation}
\end{lemma}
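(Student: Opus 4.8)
The plan is to prove the chain rule by reducing it to two known facts: the representation of the velocity field of $\mu_t$ through its Lions derivative structure, and the definition of the Lions derivative of $f$ along flows of measures. First I would fix a test time $t_0\in[0,1]$ and compute the difference quotient $\frac{1}{h}\big(f(\rho_{t_0+h})-f(\rho_{t_0})\big)$. The idea is to build an explicit coupling between $\rho_{t_0}$ and $\rho_{t_0+h}$ by pushing forward the random variable whose law is $\mu_{t_0}$. Concretely, on the probability space $(\Omega_1,\mathcal{F}_T^1,\mathbb{P}_1)$ choose $\xi_{t_0}\in\mathbb{L}^2(\mathcal{F}_T^1;\mu_{t_0})$ and, using the continuity equation $\partial_t\mu_t+\divergence(\mu_t v_t)=0$ together with the boundedness of $v_t$, realize the flow of $\mu_t$ near $t_0$ as $\mu_{t_0+h}=\mathcal{L}_{\xi_{t_0+h}}$ where $\xi_{t_0+h}$ solves the ODE $\dot\xi_t = v_t(\xi_t)$; more carefully, since $v_t$ is only $L^\infty$ and not necessarily Lipschitz, I would instead use the superposition principle (Ambrosio) to obtain a random characteristic $t\mapsto\xi_t$ with $\mathcal{L}_{\xi_t}=\mu_t$ and $\xi_{t_0+h}=\xi_{t_0}+\int_{t_0}^{t_0+h}v_s(\xi_s)\,ds$ almost surely, so that $\frac{1}{h}(\xi_{t_0+h}-\xi_{t_0})\to v_{t_0}(\xi_{t_0})$ in $\mathbb{L}^2$ along a.e. $t_0$.

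Next I would write $\rho_t=\mathcal{L}_{\varphi_t(\xi_t)}$, so that $\rho_{t_0}$ and $\rho_{t_0+h}$ are coupled through the single random variable $\theta_h:=\varphi_{t_0+h}(\xi_{t_0+h})$ versus $\theta_0:=\varphi_{t_0}(\xi_{t_0})$. By the $\mathcal{C}^1_b$ regularity of $\varphi$ in both variables, $\frac{1}{h}(\theta_h-\theta_0)\to \partial_x\varphi_{t_0}(\xi_{t_0})v_{t_0}(\xi_{t_0})+\partial_t\varphi_{t_0}(\xi_{t_0})$ in $\mathbb{L}^2$. Then I apply the fundamental identity for the Lions derivative of $f\in\mathcal{C}^1_b(\mathcal{P}_2(\R^d))$: for random variables $\theta_0,\theta_h$,
\[
f(\mathcal{L}_{\theta_h})-f(\mathcal{L}_{\theta_0})=\E\big[\partial_\mu f(\mathcal{L}_{\theta_0},\theta_0)\cdot(\theta_h-\theta_0)\big]+o(\|\theta_h-\theta_0\|_{\mathbb{L}^2}),
\]
which follows from the definition of $\partial_\mu f$ as the $L^2$-gradient of the lifted functional and its continuity. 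Dividing by $h$, sending $h\to 0$, and using the $\mathbb{L}^2$-convergence of the difference quotient together with continuity of $\partial_\mu f$ and boundedness gives
\[
\frac{d}{dt}\Big|_{t=t_0}f(\rho_t)=\E\Big[\partial_\mu f\big(\rho_{t_0},\varphi_{t_0}(\xi_{t_0})\big)\cdot\big(\partial_x\varphi_{t_0}(\xi_{t_0})v_{t_0}(\xi_{t_0})+\partial_t\varphi_{t_0}(\xi_{t_0})\big)\Big],
\]
and rewriting the expectation as an integral against $\mu_{t_0}=\mathcal{L}_{\xi_{t_0}}$ yields exactly the claimed formula at $t=t_0$. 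Since the right-hand side is continuous in $t$ (by $W_2$-continuity of $\rho_t$, continuity of $\partial_\mu f$, and continuity of the integrand coefficients), $t\mapsto f(\rho_t)$ is $\mathcal{C}^1$ on $[0,1]$ and the identity holds everywhere.

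The main obstacle I expect is the low regularity of the velocity field $v_t$: because it is merely in $L^\infty$ rather than Lipschitz, one cannot directly construct a classical flow, and the pointwise-in-$t$ differentiation of $f(\rho_t)$ must be justified along Lebesgue-almost-every $t$ first (via the superposition principle and the Lebesgue differentiation theorem applied to $s\mapsto v_s(\xi_s)$), then upgraded to every $t$ by the continuity of the right-hand side. A secondary technical point is making the error term $o(\|\theta_h-\theta_0\|_{\mathbb{L}^2})$ uniform enough to survive division by $h$; this is handled by the standard fact that for $f\in\mathcal{C}^1_b(\mathcal{P}_2(\R^d))$ the first-order Taylor expansion in the $\mathbb{L}^2$-lift has remainder controlled by a modulus of continuity of $\partial_\mu f$ times $\|\theta_h-\theta_0\|_{\mathbb{L}^2}$, and $\|\theta_h-\theta_0\|_{\mathbb{L}^2}=O(h)$ from the bounds on $v$, $\partial_x\varphi$, $\partial_t\varphi$. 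Everything else is a routine dominated-convergence argument.
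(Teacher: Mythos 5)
Your argument is essentially correct in its core mechanism, but it follows a genuinely different, Lagrangian route from the paper's Eulerian one. The paper never constructs characteristics: it extends the class of admissible test functions for the weak continuity equation from $\mathcal{C}_c^1$ to $\mathcal{C}_b^1$ by a cutoff argument, tests against $\frac{\delta f}{\delta\mu}(\mu,\varphi_t(\cdot))$ (the linear functional derivative, whose existence for $f\in\mathcal{C}_b^1(\mathcal{P}_2(\R^d))$ is taken from \cite{carmona2018probabilistic}), combines this with the first-order expansion of $f$ in terms of $\frac{\delta f}{\delta\mu}$, and controls the remainder through the metric-derivative bound $W_2(\rho_{t+\epsilon},\rho_t)=O(\epsilon)$ from \cite[Theorem 8.3.1]{ambrosio2008gradient}. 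You instead invoke Ambrosio's superposition principle to realize $\mu_t$ as the law of a random characteristic $\xi_t$ with $\xi_{t+h}-\xi_t=\int_t^{t+h}v_s(\xi_s)\,ds$, differentiate $\theta_t:=\varphi_t(\xi_t)$ in $\mathbb{L}^2$, and use the Fr\'echet differentiability of the lifted functional. Your route trades the linear functional derivative and the test-function extension for the heavier superposition principle, but it is legitimate and more probabilistic; your key quantitative input $\|\theta_{t+h}-\theta_t\|_{\mathbb{L}^2}=O(h)$ is precisely the coupled version of the paper's $W_2(\rho_{t+h},\rho_t)=O(h)$ estimate.

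The one step that would not survive scrutiny is the upgrade ``from a.e.\ $t_0$ to every $t$ by continuity of the right-hand side.'' Since $v$ is only an $L^\infty$ function of $(t,x)$, it is defined up to modification on a Lebesgue-null set of times, so the right-hand side of the chain rule need not be continuous in $t$, and your Lebesgue-point argument genuinely delivers the identity only for a.e.\ $t$. The clean repair is to note that $t\mapsto f(\rho_t)$ is Lipschitz (from $\|\theta_{t+h}-\theta_t\|_{\mathbb{L}^2}=O(h)$ and the boundedness of $\partial_\mu f$), hence absolutely continuous, so the a.e.\ derivative formula integrates to $f(\rho_{t_2})-f(\rho_{t_1})=\int_{t_1}^{t_2}\int_{\R^d}\partial_\mu f(\rho_s,\varphi_s(x))\cdot[\partial_x\varphi_s(x)v_s(x)+\partial_t\varphi_s(x)]\,\mu_s(dx)\,ds$; this integrated form is all that is used in the proof of Theorem \ref{equiH_LL}, where moreover $v_t=\nabla\phi_t$ is smooth in $t$ so the everywhere statement does hold. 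To be fair, the paper's own passage identifying $\lim_{\epsilon\to0}\frac1\epsilon\int_t^{t+\epsilon}(\cdots)\,ds$ with the integrand at $s=t$ ``due to the boundedness of the integrand'' faces the same Lebesgue-point subtlety, so this is a shared delicacy rather than a defect unique to your approach; but as written, your continuity claim for the right-hand side is unjustified and should be replaced by the absolute-continuity argument above.
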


\begin{proof}
    Since $v_t$ satisfies the equation in the sense of distribution, for any $\Psi_t(x)\in\mathcal{C}_c^1([0,1]\times\R^d)$ and $[t_1,t_2]\subset[0,1]$ we have
    \begin{equation}
    \label{chain rule test}
        \int_{\R^d}\Psi_{t_2}(x)\mu_{t_2}(dx)-\int_{\R^d}\Psi_{t_1}(x)\mu_{t_1}(dx)=\int_{t_1}^{t_2}\int_{\R^d}\big[\partial_x\Psi_t(x)\cdot v_t(x)+\partial_t\Psi_t(x)\big]\mu_t(dx)\,dt.
    \end{equation}
    We claim that we can set the test functions to be $\Psi_t(x)\in\mathcal{C}_b^1([0,1]\times\R^d)$. In fact, we can approximate the function $\Psi_t$ by $\Psi_t\chi_R$ with $\chi_R\in\mathcal{C}_c^{\infty}(\R^d),\,0\leq\chi_R\leq 1,|\nabla\chi_R|\leq 2$ and $\chi_R=1$ on $B_R$; see \cite[Remark 8.1.1]{ambrosio2008gradient}. It is clear that (\ref{chain rule test}) holds if we replace $\Psi_t$ with $\Psi_t\chi_R$. To conclude the convergence as $R\rightarrow\infty$, we need to examine the integrals
    \[
    \begin{gathered}  \int_{\R^d}\Psi_t\chi_R\,\mu_t(dx),\quad\int_0^1\int_{\R^d}(\partial_x\Psi_t\chi_R)\cdot v_t\,\mu_t(dx)dt,\\\int_0^1\int_{\R^d}(\Psi_t\partial_x\chi_R)\cdot v_t\,\mu_t(dx)dt,\quad\int_0^1\int_{\R^d}\partial_t\Psi_t\chi_R\,\mu_t(dx)dt.
    \end{gathered}
    \]
    Since $\Psi_t(x)\in\mathcal{C}_b^1([0,1]\times\R^d)$, $\sup_{t\in[0,1]}\int_{\R^d}|x|^2\mu_t(dx)<\infty$ and $v_t\in\mathbb{L}^{\infty}([0,1]\times\R^d;\R^d)$, we can apply dominated convergence theorem to obtain the desired result.
    \par Since $f\in\mathcal{C}_b^1(\mathcal{P}_2(\R^d))$, the existence of linear functional derivative $\frac{\delta f}{\delta\mu}$ can be guaranteed by \cite[I, Proposition 5.51]{carmona2018probabilistic}, and moreover, $\partial_x\frac{\delta f}{\delta\mu}(\mu,x)=\partial_{\mu}f(\mu,x)$. Then we can compute that for any $\mu\in\mathcal{P}_2(\R^d)$, $\partial_x\frac{\delta f}{\delta\mu}(\mu,\varphi_t(x))=\partial_{\mu}f(\mu,\varphi_t(x))\partial_x\varphi_t(x)$ and $\partial_t\frac{\delta f}{\delta\mu}(\mu,\varphi_t(x))=\partial_{\mu}f(\mu,\varphi_t(x))\partial_t\varphi_t(x)$ are continuous and bounded, so $\frac{\delta f}{\delta\mu}(\mu,\varphi_t(x))$ is an admissible test function. 
    \par Next, for any $[t,t+\epsilon]\subset[0,1]$, we substitute the above test function into (\ref{chain rule test}) and obtain
    \[
    \begin{aligned}
    &\int_{\R^d}\frac{\delta f}{\delta\mu}(\mu,x)\big(\rho_{t+\epsilon}(dx)-\rho_t(dx)\big)\\
    =&\int_{\R^d}\frac{\delta f}{\delta\mu}(\mu,\varphi_{t+\epsilon}(x))\mu_{t+\epsilon}(dx)-\int_{\R^d}\frac{\delta f}{\delta\mu}(\mu,\varphi_t(x))\mu_t(dx)\\
    =&\int_t^{t+\epsilon}\int_{\R^d}\partial_{\mu}f(\mu,\varphi_s(x))\cdot\big[\partial_x\varphi_s(x)v_s(x)+\partial_t\varphi_s(x)\big]\mu_s(dx)\,ds,
    \end{aligned}
    \]
    which implies
    \begin{equation}\label{chain rule 1}
    \begin{aligned}
        &\lim_{\epsilon\rightarrow 0}\frac{1}{\epsilon}\int_{\R^d}\frac{\delta f}{\delta\mu}(\mu,x)\big(\rho_{t+\epsilon}(dx)-\rho_t(dx)\big)\\
        =&\int_{\R^d}\partial_{\mu}f(\mu,\varphi_t(x))\cdot\big[\partial_x\varphi_t(x)v_t(x)+\partial_t\varphi_t(x)\big]\mu_t(dx)
    \end{aligned}
    \end{equation}
    due to the boundedness of the integrand.
    \par By a first-order expansion of $f(\rho_t)$ (see \cite[I, Proposition 5.44]{carmona2018probabilistic}), we have
    \begin{equation}\label{chain rule 2}
        f(\rho_{t+\epsilon})-f(\rho_t)=\int_{\R^d}\frac{\delta f}{\delta\mu}(\rho_t,x)(\rho_{t+\epsilon}(dx)-\rho_t(dx))+o(W_2(\rho_t,\rho_{t+\epsilon})).
    \end{equation}
    To see $W_2(\rho_{t+\epsilon},\rho_t)=O(\epsilon)$, we refer to \cite[Theorem 8.3.1]{ambrosio2008gradient}, which gives the estimate
    \begin{equation}
        W_2(\mu_{t+\epsilon},\mu_t)\leq\int_t^{t+\epsilon}\Big(\int_{\R^d}|v_s(x)|^2\,\mu_s(dx)\Big)^{\frac12}\,ds=O(\epsilon).
    \end{equation}
    Since $\varphi_t(x)\in\mathcal{C}_b^1([0,1]\times\R^d;\R^d])$, combining with the above estimate, we have
    \begin{equation}
        W_2^2(\rho_{t+\epsilon},\rho_t)\leq\E[|\varphi_{t+\epsilon}(\xi_{t+\epsilon})-\varphi_t(\xi_t)|^2]\leq C\epsilon^2+CW_2^2(\mu_{t+\epsilon},\mu_t)=O(\epsilon^2),
    \end{equation}
    where we choose random variables $\xi_t,\,\xi_{t+\epsilon}$ such that $\mathcal{L}_{\xi_t}=\mu_t,\,\mathcal{L}_{\xi_{t+\epsilon}}=\mu_{t+\epsilon}$ and $W_2^2(\mu_{t+\epsilon},\mu_t)=\E[|\xi_{t+\epsilon}-\xi_t|^2]$. Therefore, combining (\ref{chain rule 1}) with (\ref{chain rule 2}), we conclude that
    \begin{equation}
        \frac{d}{dt}f(\rho_t)=\int_{\R^d}\partial_{\mu}f\big(\rho_t,\varphi_t(x)\big)\cdot\big[\partial_x\varphi_t(x)v_t(x)+\partial_t\varphi_t(x)\big]\mu_t(dx).
    \end{equation}
\end{proof}
The relation of Lasry-Lions monotonicity for the Hamiltonian is established below.
\begin{theorem}\label{equiH_LL}
    Suppose $\widehat{H}\in\mathcal{C}^2(\R^d\times\R^d\times\mathcal{P}_2(\R^{2d}))$. If $\widehat{H}$ satisfies the differential form of Lasry-Lions monotonicity condition Assumption \ref{assumption H LL diff}, then $\widehat{H}$ satisfies the integral form of Lasry-Lions monotonicity condition Assumption \ref{assumption H LL}.
\end{theorem}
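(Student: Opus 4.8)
The plan is to interpolate between the two configurations $(\mu^1,\rho^1)$ and $(\mu^2,\rho^2)$ along a suitable curve of measures, apply the chain rule from Lemma~\ref{chain rule} to reduce the integral inequality~(\ref{H LL int}) to an integrated version of the differential inequality~(\ref{H LL diff}), and then invoke Assumption~\ref{assumption H LL diff} pointwise in the interpolation parameter. Concretely, fix $\varphi(x,\mu)\in\mathcal{C}^1(\R^d\times\mathcal{P}_2(\R^d);\R^d)$ with $\varphi,\partial_x\varphi$ bounded and $\mu^1,\mu^2\in\mathcal{P}_2(\R^d)$. I would first connect $\mu^1$ and $\mu^2$ by a curve $\{\mu_t\}_{t\in[0,1]}$ — the natural choice being the $W_2$-geodesic (constant-speed displacement interpolation), which comes equipped with a bounded velocity field $v_t$ solving $\partial_t\mu_t+\divergence(\mu_t v_t)=0$; an optimal-transport-map representation $\mu_t=(\,(1-t)\,\mathrm{id}+t\,T\,)\#\mu^1$ makes the boundedness of $v_t$ (at least after a standard compact-support/mollification approximation) transparent. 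Along this curve set $\varphi_t(x):=\varphi(x,\mu_t)$ and $\rho_t:=(\mathrm{id},\varphi_t(\cdot))\#\mu_t$; note $\varphi_t\in\mathcal{C}^1_b$ in $x$, but $t\mapsto\varphi_t$ need not be differentiable a priori, so a first reduction is to further mollify/approximate so that Lemma~\ref{chain rule} applies, then pass to the limit at the end.

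Next I would introduce the scalar function
\[
\Lambda(t):=\int_{\R^d}\widehat H\big(x,\varphi_t(x),\rho_t\big)\,\mu_t(dx)-\int_{\R^d}\varphi_t(x)\cdot\partial_p\widehat H\big(x,\varphi_t(x),\rho_t\big)\,\mu_t(dx)+(\text{correction terms}),
\]
designed so that $\Lambda(1)-\Lambda(0)$ equals the left-hand side of~(\ref{H LL int}); the precise bookkeeping here — making sure the two $\partial_p\widehat H$-against-$\mu^i$ terms in~(\ref{H LL int}) are exactly recovered — is the fiddly part and is where I expect to spend the most care. Having matched endpoints, the inequality~(\ref{H LL int}) becomes $\int_0^1\Lambda'(t)\,dt\ge 0$, and it suffices to show $\Lambda'(t)\ge 0$ for a.e.\ $t$. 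Differentiating $\Lambda$ requires both the transport-equation identity~(\ref{chain rule test}) (for the terms where $t$ enters through $\mu_t$) and Lemma~\ref{chain rule} (for the $\rho_t$-dependence, since $\rho_t$ is a pushforward of $\mu_t$ under the moving map $\varphi_t$); after collecting terms, $\Lambda'(t)$ will be an expression involving $\partial_{pp}\widehat H$, $\partial_{x\rho}\widehat H$, $\partial_{p\rho}\widehat H$ paired against the "velocities" $\eta:=\partial_\mu(\text{test})$-type quantities, $\gamma:=\partial_x\varphi_t\,v_t+\partial_t\varphi_t$ (the velocity of $\rho_t$'s second marginal), and $\zeta$ related to $v_t$ itself — i.e.\ precisely the combination appearing on the left of~(\ref{H LL diff}), integrated against $\mu_t$. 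At that point Assumption~\ref{assumption H LL diff}, applied with $\xi$ distributed as $\mu_t$ and $\varphi=\varphi_t$, $\eta,\gamma,\zeta$ the induced fields, gives $\Lambda'(t)\ge 0$.

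The main obstacle, as flagged, is the algebraic identification: writing $\Lambda'(t)$ in a form that exactly matches~(\ref{H LL diff}) — in particular tracking which derivative ($\partial_{\rho_1}$ vs.\ $\partial_{\rho_2}$) hits which velocity, and verifying that the "extra" first-order terms coming from $\frac{d}{dt}[\varphi_t\cdot\partial_p\widehat H]$ cancel against the chain-rule contributions rather than leaving a sign-indefinite remainder. A secondary technical point is the approximation argument: the $W_2$-geodesic velocity field and the composed map $\varphi_t$ are only as regular as Lemma~\ref{chain rule} demands after a density/mollification step, so I would carry out the computation for smooth, compactly supported approximants and then pass to the limit using $W_2$-continuity of $\widehat H$ and its derivatives (Assumption~\ref{H regularity}(i) supplies the needed local bounds) together with dominated convergence, exactly as in the proof of Lemma~\ref{chain rule}. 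Once $\Lambda'(t)\ge0$ is established for the approximants and stable under the limit, integrating over $t\in[0,1]$ and taking the limit yields~(\ref{H LL int}).
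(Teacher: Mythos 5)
Your overall strategy --- interpolate between the two configurations, use Lemma \ref{chain rule}, and apply the differential form slice-wise in the interpolation parameter --- is the same as the paper's, but the two concrete choices you make break exactly the step you flag as the crux, and the gap is real. The left-hand side of (\ref{H LL int}) is not the increment of a functional of a single configuration: it pairs the \emph{fixed} differences $\mu^1-\mu^2$ and $\varphi(\cdot,\mu^1)-\varphi(\cdot,\mu^2)$ against endpoint quantities. The paper turns it into $\int_0^1(\cdots)\,dt$ with a slice-wise sign precisely because it uses the affine interpolation of densities $m_t=tm^1+(1-t)m^2$ (with velocity $v_t=\nabla\phi_t$ from the elliptic problem $\partial_t m_t+\divergence(m_t\nabla\phi_t)=0$, after a density reduction to smooth densities bounded below on a ball) together with the linear interpolation $\varphi_t=t\varphi(\cdot,\mu^1)+(1-t)\varphi(\cdot,\mu^2)$ of the two \emph{frozen} endpoint functions: then $\partial_t m_t\equiv m^1-m^2$ and $\partial_t\varphi_t\equiv\varphi^1-\varphi^2$ are constant in $t$, so the fixed differences in (\ref{H LL int}) may be replaced by $-\divergence(m_tv_t)$ and $\partial_t\varphi_t$ \emph{at the same} $t$ at which the integrand's derivatives are evaluated; after integration by parts in $x$, each $t$-slice is exactly the left side of (\ref{H LL diff}) with $\xi\sim\mu_t$, Lipschitz map $\varphi_t$, $\eta=v_t(\xi)$, $\gamma=\partial_x\varphi_t(\xi)v_t(\xi)$, $\zeta=\varphi(\xi,\mu^1)-\varphi(\xi,\mu^2)$. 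Along your $W_2$-displacement geodesic $\partial_t\mu_t$ is not constant in $t$, so this same-$t$ substitution is unavailable, and no choice of the unspecified ``correction terms'' in your $\Lambda$ is exhibited (or apparent) that makes $\Lambda(1)-\Lambda(0)$ equal the left side of (\ref{H LL int}) while keeping $\Lambda'(t)$ of the form covered by Assumption \ref{assumption H LL diff}; your outline stops exactly where this identity has to be produced.

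The choice $\varphi_t(x):=\varphi(x,\mu_t)$ compounds the problem: differentiating in $t$ generates terms involving $\partial_\mu\varphi$ along the curve (and, as you note, the $t$-differentiability of $t\mapsto\varphi(x,\mu_t)$ is itself an issue), yet (\ref{H LL int}) only ever involves $\varphi(\cdot,\mu^1)$ and $\varphi(\cdot,\mu^2)$, so these terms have nothing to cancel against and are not matched by the fixed-$\varphi$ structure of Assumption \ref{assumption H LL diff}. The remedy is not more mollification but replacing both choices: interpolate the densities affinely and the endpoint functions linearly as above. With those substitutions your computation closes and coincides with the paper's proof.
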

\begin{proof}     
We first use a $W_1$-geodesic interpolation argument as in \cite[Remark 2.4]{mou2022displacement}. By a standard density argument, we can assume that $\mu^i$ admit smooth densities $m^i\in\mathcal{C}^{\infty}(B_R)$ with $\min_{B_R}m^i>0$ for $i=1,2$. Define the interpolation $m_t=tm^1+(1-t)m^2$ for $t\in[0,1]$, and let $\mu_t$ denote the corresponding probability measure. Since $m_t$ is bounded away from $0$ on $[0,1]\times B_R$, then for each $t$, there exists a unique solution $\phi_t\in H_0^1(B_R^o)\cap \mathcal{C}^{\infty}(B_R)$ to the elliptic equation $\partial_t m_t+\divergence(m_t \nabla\phi_t)=0$. Let $v_t:=\nabla\phi_t$, then $v_t$ serves as a velocity for the curve $t\rightarrow\mu_t$. 
\par For the given $\varphi\in\mathcal{C}^1(\R^d\times\mathcal{P}_2(\R^d);\R^d)$ with bounded $\varphi,\partial_x\varphi$, and $\mu^1,\mu^2\in\mathcal{P}_2(\R^d)$, define $\varphi^i(\cdot):=\varphi(\cdot,\mu^i),\,\,\varphi_t(\cdot):=t\varphi^1(\cdot)+(1-t)\varphi^2(\cdot)$ and $\rho^i:=(id,\varphi(\cdot,\mu^i))\#\mu^i,\, \rho_t:=\big(id,\varphi_t(\cdot)\big)\#\mu_t$ for $i=1,2$. Noting that $\varphi_t(x)\in\mathcal{C}_b^1([0,1]\times\R^d)$, we can apply the chain rule Lemma \ref{chain rule} to compute that
\small\begin{align*}
    &\int_{\R^d}\big(\widehat{H}(x,\varphi^1(x),\rho^1)-\widehat{H}(x,\varphi^2(x),\rho^2)\big)(\mu^1-\mu^2)(dx)\\
    &-\int_{\R^d}\big(\varphi^1(x)-\varphi^2(x)\big)\cdot\big(\partial_p\widehat{H}(x,\varphi^1(x),\rho^1)\mu^1(dx)-\partial_p\widehat{H}(x,\varphi^2(x),\rho^2)\mu^2(dx)\big)\\
    =&-\int_0^1\int_{\R^d}\frac{d}{dt}\widehat{H}\big(x,\varphi_t(x),\rho_t\big)\nabla(m_tv_t)\,dx\,dt\\
    &-\int_0^1\int_{\R^d}\frac{d}{dt}\Big[\big(\varphi^1(x)-\varphi^2(x)\big)\cdot\partial_p\widehat{H}\big(x,\varphi_t(x),\rho_t\big)m_t(x)\Big]\,dx\,dt\\
    =&\int_0^1\int_{\R^d}\int_{\R^d}\Big[\Big\langle v_t(x),\widehat{H}_{x\rho_1}(x,\tilde{x})v_t(\tilde{x})+\widehat{H}_{x\rho_2}(x,\tilde{x})\big(\partial_x\varphi_t(\tilde{x})v_t(\tilde{x})+\varphi^1(\tilde{x})-\varphi^2(\tilde{x})\big)\Big\rangle\\
     &\quad+\Big\langle\partial_x\varphi_t(x)v_t(x)-\varphi^1(x)+\varphi^2(x),\widehat{H}_{p\rho_1}(x,\tilde{x})v_t(\tilde{x})+\widehat{H}_{p\rho_2}(x,\tilde{x})\\
     &\quad\qquad\cdot\big(\partial_x\varphi_t(\tilde{x})v_t(\tilde{x})+\varphi^1(\tilde{x})-\varphi^2(\tilde{x})\big)\Big\rangle\\
    &\quad-\Big\langle\varphi^1(x)-\varphi^2(x),\widehat{H}_{pp}(x)\big(\varphi^1(x)-\varphi^2(x)\big)\Big\rangle\Big]m_t(x)m_t(\tilde{x})\,dx\,d\tilde{x}\,dt,
\end{align*}\normalsize
where in the last equality we use integration by parts. The notations for derivatives are defined as $\widehat{H}_{pp}(x):=\partial_{pp}H(x,\varphi_t(x),\rho_t)$, $\widehat{H}_{x\rho}(x,\tilde{x}):=\partial_{x\rho}\widehat{H}(x,\varphi_t(x),\rho_t,\tilde{x},\varphi_t(\tilde{x}))$, and it is similar for $\widehat{H}_{p\rho}(x,\tilde{x})$. From (\ref{H LL diff}), we see that the above integral is nonnegative so $\widehat{H}$ satisfies Assumption \ref{assumption H LL}.
\end{proof}
\begin{remark}
    Conversely, we can see that the integral form of Lasry-Lions monotonicity implies its differential form for specific function $\varphi$ and random variables $\zeta,\gamma$. This is why we mentioned the integral form for Lasry-Lions monotonicity condition is weaker than the differential form. For any $\varphi\in\mathcal{C}^1(\R^d\times\mathcal{P}_2(\R^d);\R^d)$ with bounded $\varphi,\partial_x\varphi$, and $\mu^1,\mu^2\in\mathcal{P}_2(\R^d)$ with $\xi^i\in\mathbb{L}^2(\mathcal{F}_T^1;\mu^i)$, we denote $\varphi^i(\cdot):=\varphi(\cdot,\mu^i),\,\rho^i:=\big(id,\varphi(\cdot,\mu^i)\big)\#\mu^i$ for $i=1,2$. For $\lambda\in[0,1]$, we denote $\xi_{\lambda}:=\lambda\xi^1+(1-\lambda)\xi^2,\,\varphi_{\lambda}(\cdot):=\lambda\varphi^1(\cdot)+(1-\lambda)\varphi^2(\cdot)$, $\rho_\lambda:=\mathcal{L}\big(\xi_{\lambda},\lambda \varphi^1(\xi^1)+(1-\lambda)\varphi^2(\xi^2)\big),\,\mu_{\lambda}(dx):=\lambda\mu^1(dx)+(1-\lambda)\mu^2(dx)$. Then \eqref{H LL int} implies
\begin{align*}
    0\leq&\int_{\R^d}\big(\widehat{H}(x,\varphi^1(x),\rho^1)-\widehat{H}(x,\varphi^2(x),\rho^2)\big)\,(\mu^1-\mu^2)(dx)\\
    &-\int_{\R^d}\big(\varphi^1(x)-\varphi^2(x)\big)\cdot\big[\partial_p\widehat{H}(x,\varphi^1(x),\rho^1)\,\mu^1(dx)-\partial_p\widehat{H}(x,\varphi^2(x),\rho^2)\,\mu^2(dx)\big]\\
    =&\tilde{\E}\int_0^1\int_0^1\Big\{\Big\langle\xi^1-\xi^2,\partial_{x\rho_1}\widehat{H}\big(\xi_{\theta},\varphi_{\lambda}(\xi_{\theta}),\rho_{\lambda},\tilde{\xi}_{\lambda},\lambda\varphi^1(\tilde{\xi}^1)+(1-\lambda)\varphi^2(\tilde{\xi}^2)\big)(\tilde{\xi}^1-\tilde{\xi}^2)\Big\rangle\\
    &\quad+\Big\langle\partial_x \varphi_{\lambda}(\xi_{\theta})(\xi^1-\xi^2),\partial_{p\rho_1}\widehat{H}(\tilde{\xi}^1-\tilde{\xi}^2)\Big\rangle+\Big\langle\xi^1-\xi^2,\partial_{x\rho_2}\widehat{H}\big(\varphi^1(\tilde{\xi}^1)-\varphi^2(\tilde{\xi}^2)\big)\Big\rangle\\
    &\quad+\Big\langle\partial_x\varphi_{\lambda}(\xi_{\theta})(\xi^1-\xi^2),\partial_{p\rho_2}\widehat{H}\big(\varphi^1(\tilde{\xi}^1)-\varphi^2(\tilde{\xi}^2)\big)\Big\rangle\,\Big\}d\theta\,d\lambda\\
    &-{\tilde{\E}}\int_0^1{\int_{\R^d}\Big\{}\Big\langle\varphi^1(x)-\varphi^2(x),\partial_{pp}\widehat{H}\big(x,\varphi_{\lambda}(x),\rho_{\lambda}\big)\big(\varphi^1(x)-\varphi^2(x)\big)\Big\rangle\\
    &\quad+\Big\langle\varphi^1(x)-\varphi^2(x),\partial_{p\rho_1}\widehat{H}(\tilde{\xi}^1-\tilde{\xi}^2)\Big\rangle\\
    &\quad+\Big\langle\varphi^1(x)-\varphi^2(x),\partial_{p\rho_2}\widehat{H}\big(\varphi(\tilde{\xi}^1,\mu^1)-\varphi(\tilde{\xi}^2,\mu^2)\big)\Big\rangle\,\Big\}\,\mu_{\lambda}(dx)d\lambda.
\end{align*}
\par For given $\xi,\eta\in\mathbb{L}^2(\mathcal{F}_T^1)$ with $\mathcal{L}_{\xi}=\mu\in\mathcal{P}_2(\R^d)$, we set $\xi^2=\xi,\,\xi^1=\xi+\epsilon\eta$, divide the above inequality by $\epsilon^2$ and let $\epsilon\rightarrow 0$. Note that
\[
    \partial_x\varphi_{\lambda}(\xi_{\theta})\rightarrow\partial_x\varphi(\xi),\quad\frac{1}{\epsilon}[\varphi(x,\mathcal{L}_{\xi+\epsilon\eta})-\varphi(x,\mathcal{L}_{\xi})]\rightarrow\tilde{\E}_{\mathcal{F}_T^1}[\partial_{\mu}\varphi(x,\mathcal{L}_{\xi},\tilde{\xi})\tilde{\eta}]
\]
as $\epsilon\rightarrow 0$. We conclude the Lasry-Lions monotonicity in the differential form (\ref{H LL diff}) for $\varphi\in\mathcal{C}^1(\R^d\times\mathcal{P}_2(\R^d);\R^d)$ and $\zeta:=\tilde{\E}_{\mathcal{F}_T^1}[\partial_{\mu}\varphi(\xi,\mu,\tilde{\xi})\tilde{\eta}],\,\gamma:=\partial_x\varphi(x,\mu)\eta$.
\end{remark}
\begin{remark}
    In \cite{kobe2022}, the authors proved the uniqueness of the MFGC system (\ref{FBSPDE}) through both the convexity and the following Lasry-Lions monotonicity condition for Lagrangian:
    \begin{equation}\label{f LL} \int_{\R^{2d}}\big(f(x,a,\rho^1)-f(x,a,\rho^2)\big)(\rho^1-\rho^2)(dx,da)\geq 0.
    \end{equation}
    In contrast, our Lasry-Lions monotonicity condition (\ref{H LL int}) for the Hamiltonian seems to be weaker if we translate our condition (\ref{H LL int}) back to the Lagrangian. More precisely, following the notations in Assumption \ref{assumption H LL}, we further denote $a^i:=\partial_p\widehat{H}(x,\varphi(x,\mu^i),\rho^i)$ for $i=1,2$, ad then we can rewrite (\ref{H LL int}) as
    \begin{equation}
        \begin{aligned}
        &\int_{\R^d}\big(\widehat{H}(x,\varphi(x,\mu^1),\rho^1)-\widehat{H}(x,\varphi(x,\mu^2),\rho^2)\big)(\mu^1-\mu^2)(dx)\\
        &-\int_{\R^d}\big(\varphi(x,\mu^1)-\varphi(x,\mu^2)\big)\cdot\big(\partial_p\widehat{H}(x,\varphi(x,\mu^1),\rho^1)\mu^1(dx)\\
        &\qquad\qquad-\partial_p\widehat{H}(x,\varphi(x,\mu^2),\rho^2)\mu^2(dx)\big)\\
        =&\int_{\R^{2d}}\big(f(x,a,\tilde{\rho}^1)-f(x,a,\tilde{\rho}^2)\big)(\tilde{\rho}^1-\tilde{\rho}^2)(dx,da)\\
        &+\int_{\R^d}\big(f(x,a^1,\tilde{\rho}^2)-f(x,a^2,\tilde{\rho}^2)-(a^1-a^2)\cdot\partial_a f(x,a^2,\tilde{\rho}^2)\big)\mu^1(dx)\\
        &+\int_{\R^d}\big(f(x,a^2,\tilde{\rho}^1)-f(x,a^1,\tilde{\rho}^1)-(a^2-a^1)\cdot\partial_a f(x,a^1,\tilde{\rho}^1)\big)\mu^2(dx),
    \end{aligned}
    \end{equation}
    where $\tilde{\rho}^i:=\Phi(\rho^i),i=1,2$ are defined in Assumption \ref{fixed point}. We see that, if (\ref{f LL}) and the convexity of Lagrangian hold, our Lasry-Lions monotonicity condition (\ref{H LL int}) holds.
\end{remark}
\par Next we focus on the displacement $\lambda$-monotonicity. We first give the following integral form of displacement $\lambda$-monotonicity condition on $\widehat{H}$.
\begin{assumption}
\label{assumption H disp}
    For any $\xi^i,\eta^i\in\mathbb{L}^2(\mathcal{F}_T^1)$ with $\mathcal{L}_{(\xi^i,\eta^i)}=\rho^i\in\mathcal{P}_2(\R^{2d}),\,i=1,2$,
    \begin{equation}
    \label{H displacement int}
    \begin{aligned}
        &\E\Big[\big\langle\partial_x\widehat{H}(\xi^1,\eta^1,\rho^1)-\partial_x\widehat{H}(\xi^2,\eta^2,\rho^2),\xi^1-\xi^2\big\rangle\Big]\\
        &-\E\Big[\big\langle\partial_p\widehat{H}(\xi^1,\eta^1,\rho^1)-\partial_p\widehat{H}(\xi^2,\eta^2,\rho^2),\eta^1-\eta^2\big\rangle\Big]\\
        &-2\lambda\E\Big[\big\langle\partial_p\widehat{H}(\xi^1,\eta^1,\rho^1)-\partial_p\widehat{H}(\xi^2,\eta^2,\rho^2),\xi^1-\xi^2\big\rangle\Big]\geq 0.
    \end{aligned}
    \end{equation}
\end{assumption}
The following differential form of displacement $\lambda$-monotonicity on $\widehat{H}$ is from \cite{mou2022propagation}.
\begin{assumption}
    For any $\xi,\eta,\gamma,\zeta\in\mathbb{L}^2(\mathcal{F}_T^1)$,
    \begin{equation}
    \label{H displacement diff}
    \begin{aligned}
        &\tilde{\E}\Big[\big\langle\zeta,\widehat{H}_{pp}(\xi,\eta)\zeta\big\rangle-\big\langle\gamma,[\widehat{H}_{xx}(\xi,\eta)-2\lambda\widehat{H}_{xp}(\xi,\eta)]\gamma\big\rangle\\
        &+\big\langle\zeta,[\widehat{H}_{p\rho_1}(\xi,\eta,\tilde{\xi},\tilde{\eta})-\widehat{H}_{x \rho_2}(\tilde{\xi},\tilde{\eta},\xi,\eta)+2\lambda\widehat{H}_{p\rho_2 }(\tilde{\xi},\tilde{\eta},\xi,\eta)]\tilde{\gamma}+2\lambda\widehat{H}_{pp}(\xi,\eta)\gamma\big\rangle\\
        &+\big\langle\zeta,\widehat{H}_{p\rho_2}(\xi,\eta,\tilde{\xi},\tilde{\eta})\tilde{\zeta}\big\rangle-\big\langle\gamma,[\widehat{H}_{x\rho_1}(\xi,\eta,\tilde{\xi},\tilde{\eta})-2\lambda\widehat{H}_{p\rho_1}(\xi,\eta,\tilde{\xi},\tilde{\eta})]\tilde{\gamma}\big\rangle\Big]\leq 0,
    \end{aligned}
    \end{equation}
    where $\widehat{H}_{pp}(x,p):=\partial_{pp}\widehat{H}(x,p,\mathcal{L}_{(\xi,\eta)})$, $\widehat{H}_{x\rho}(x,p,\tilde{x},\tilde{p}):=\partial_{x\rho}\widehat{H}(x,p,\mathcal{L}_{(\xi,\eta)},\tilde{x},\tilde{p})$, and it is similar for $\widehat{H}_{xp},\widehat{H}_{p\rho}$.
\end{assumption}
\par The integral and differential forms of displacement $\lambda$-monotonicity for $\widehat{H}$ can be shown to be equivalent.
\begin{theorem}\label{equiH_disp}
    Suppose $\widehat{H}\in\mathcal{C}^2(\R^d\times\R^d\times\mathcal{P}_2(\R^{2d}))$. Then the displacement $\lambda$-monotonicity of $\widehat{H}$ in the integral form (\ref{H displacement int}) is equivalent to its differential form (\ref{H displacement diff}).
\end{theorem}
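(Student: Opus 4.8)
The plan is to establish the equivalence of \eqref{H displacement int} and \eqref{H displacement diff} by the same interpolation strategy already used in the proof of Theorem \ref{equiH_LL}, working in both directions. For the implication $\eqref{H displacement int}\Rightarrow\eqref{H displacement diff}$, I would fix $\xi,\eta,\gamma,\zeta\in\mathbb{L}^2(\mathcal{F}_T^1)$, set $\xi^\epsilon:=\xi+\epsilon\gamma$, $\eta^\epsilon:=\eta+\epsilon\zeta$, and $\xi^2:=\xi$, $\eta^2:=\eta$, and plug these into \eqref{H displacement int}. Each of the three expectation terms is a difference of the form $g(\xi^\epsilon,\eta^\epsilon,\mathcal{L}_{(\xi^\epsilon,\eta^\epsilon)})-g(\xi,\eta,\mathcal{L}_{(\xi,\eta)})$ paired against a linear increment; dividing by $\epsilon^2$ and letting $\epsilon\to0$, a first-order Taylor expansion in the three arguments (state, momentum, and measure — the latter handled via the Lions derivative, using $\mathcal{L}_{(\xi^\epsilon,\eta^\epsilon)}$ converging along the straight line and producing terms $\tilde{\E}[\partial_\rho\widehat{H}(\cdot,\tilde\xi,\tilde\eta)(\tilde\gamma,\tilde\zeta)]$) produces exactly the bilinear expression in \eqref{H displacement diff}. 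Here $\widehat{H}\in\mathcal{C}^2$ guarantees the remainder is $o(\epsilon^2)$ after division, so the limit inequality is precisely \eqref{H displacement diff}.

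For the converse $\eqref{H displacement diff}\Rightarrow\eqref{H displacement int}$, I would mimic the geodesic-interpolation computation in the proof of Theorem \ref{equiH_LL}. Given $\xi^i,\eta^i$ with $\mathcal{L}_{(\xi^i,\eta^i)}=\rho^i$, set $\xi_t:=t\xi^1+(1-t)\xi^2$, $\eta_t:=t\eta^1+(1-t)\eta^2$, and $\rho_t:=\mathcal{L}_{(\xi_t,\eta_t)}$ for $t\in[0,1]$. Then write the left-hand side of \eqref{H displacement int} as an integral over $t$ of $\frac{d}{dt}$ of appropriate quantities: the first term as $\int_0^1\frac{d}{dt}\E[\langle\partial_x\widehat{H}(\xi_t,\eta_t,\rho_t),\xi^1-\xi^2\rangle]\,dt$, and similarly for the $\partial_p$-terms. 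Differentiating in $t$, using $\dot\xi_t=\xi^1-\xi^2=:\gamma$, $\dot\eta_t=\eta^1-\eta^2=:\zeta$, and the Lions chain rule for $t\mapsto\widehat{H}(\cdot,\cdot,\rho_t)$ along the straight-line coupling (which yields the $\partial_\rho\widehat{H}$ contractions against $(\tilde\gamma,\tilde\zeta)$), collects all second-order terms into an integrand of exactly the form on the left of \eqref{H displacement diff} with the specific choices $\gamma=\xi^1-\xi^2$, $\zeta=\eta^1-\eta^2$. By hypothesis that integrand is $\leq0$ pointwise in $t$ (note \eqref{H displacement diff} reads as ``$\leq0$'' and the boundary terms in the integration-by-parts in $t$ vanish or telescope correctly), so the $t$-integral gives \eqref{H displacement int}.

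The main obstacle I expect is bookkeeping rather than anything conceptual: one must verify that the cross terms produced by differentiating the three separate pieces of \eqref{H displacement int} reassemble into precisely the pattern of \eqref{H displacement diff}, including the $2\lambda$-weighted combinations $\widehat{H}_{xx}-2\lambda\widehat{H}_{xp}$, the mixed block $\widehat{H}_{p\rho_1}-\widehat{H}_{x\rho_2}+2\lambda\widehat{H}_{p\rho_2}$, and the symmetric term $2\lambda\widehat{H}_{pp}$; the asymmetry between the $\rho_1$ and $\rho_2$ slots (and the fact that in \eqref{H displacement diff} the arguments of $\widehat{H}_{x\rho_2}$ are swapped, $\widehat{H}_{x\rho_2}(\tilde\xi,\tilde\eta,\xi,\eta)$) must be matched by carefully tracking which variable is integrated and which is the ``tilde'' copy when applying the Lions chain rule, using the symmetry $\E[\langle a,\tilde{\E}[M(\xi,\tilde\xi)]b\rangle]=\tilde{\E}[\langle \tilde a,M(\tilde\xi,\xi)b\rangle]$ after relabelling. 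A secondary technical point is justifying the differentiation under the expectation and the $\mathcal{C}_b^1$-regularity needed to apply Lemma \ref{chain rule} (or its $\mathbb{L}^2$-analogue): since $\widehat{H}$ need only be locally Lipschitz in $p$ by Assumption \ref{H regularity}, one should first argue the bilinear forms in question only involve $\partial_{xx}\widehat{H},\partial_{xp}\widehat{H},\partial_{pp}\widehat{H},\partial_{x\rho}\widehat{H},\partial_{p\rho}\widehat{H}$ evaluated along the bounded-momentum interpolation (the $\eta^i$ are fixed $\mathbb{L}^2$ random variables, hence the relevant range of $p$ is $\mathbb{L}^2$-bounded), so the local bounds of Assumption \ref{H regularity}(i) suffice for the dominated-convergence arguments. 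Once these matchings are checked, both implications close.
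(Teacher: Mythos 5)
Your proposal is correct and follows essentially the same route as the paper: for $\eqref{H displacement diff}\Rightarrow\eqref{H displacement int}$ the paper writes the differences $\partial_x\widehat{H}(\xi^1,\eta^1,\rho^1)-\partial_x\widehat{H}(\xi^2,\eta^2,\rho^2)$ and $\partial_p\widehat{H}(\xi^1,\eta^1,\rho^1)-\partial_p\widehat{H}(\xi^2,\eta^2,\rho^2)$ as Taylor/FTC integrals along the linear interpolation $(\xi_\theta,\eta_\theta)$ of the random variables (equivalently your $t$-derivative computation) and applies the differential form pointwise with $\gamma=\xi^1-\xi^2$, $\zeta=\eta^1-\eta^2$, while the converse is exactly your $\epsilon$-perturbation $\xi+\epsilon\gamma$, $\eta+\epsilon\zeta$ with $\epsilon\to 0$. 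The only cosmetic difference is that the paper works directly with the Lions-derivative expansion of $\widehat{H}$ composed with random variables, so no appeal to Lemma \ref{chain rule} or a geodesic velocity field is needed.
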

\begin{proof}
   Note that
    \[
    \begin{aligned}
        &\partial_x\widehat{H}(\xi^1,\eta^1,\rho^1)-\partial_x\widehat{H}(\xi^2,\eta^2,\rho^2)\\
        =&\int_0^1\Big\{\partial_{xx}\widehat{H}(\xi_{\theta},\eta_{\theta},\mathcal{L}_{(\xi_{\theta},\eta_{\theta})})(\xi^1-\xi^2)+\partial_{xp}\widehat{H}(\xi_{\theta},\eta_{\theta},\mathcal{L}_{(\xi_{\theta},\eta_{\theta})})(\eta^1-\eta^2)\\
        &+\tilde{\E}\Big[\partial_{x\rho_1}\widehat{H}(\xi_{\theta},\eta_{\theta},\mathcal{L}_{(\xi_{\theta},\eta_{\theta})},\tilde{\xi}_{\theta},\tilde{\eta}_{\theta})(\tilde{\xi}^1-\tilde{\xi}^2)\\
        &\qquad+\partial_{x\rho_2}\widehat{H}(\xi_{\theta},\eta_{\theta},\mathcal{L}_{(\xi_{\theta},\eta_{\theta})},\tilde{\xi}_{\theta},\tilde{\eta}_{\theta})(\tilde{\eta}^1-\tilde{\eta}^2)\Big]\Big\}\,d\theta
    \end{aligned}
    \]
    and
    \[
    \begin{aligned}
        &\partial_p\widehat{H}(\xi^1,\eta^1,\rho^1)-\partial_p\widehat{H}(\xi^2,\eta^2,\rho^2)\\
        =&\int_0^1\Big\{\partial_{xp}\widehat{H}(\xi_{\theta},\eta_{\theta},\mathcal{L}_{(\xi_{\theta},\eta_{\theta})})(\xi^1-\xi^2)+\partial_{pp}\widehat{H}(\xi_{\theta},\eta_{\theta},\mathcal{L}_{(\xi_{\theta},\eta_{\theta})})(\eta^1-\eta^2)\\
        &+\tilde{\E}\Big[\partial_{p\rho_1}\widehat{H}(\xi_{\theta},\eta_{\theta},\mathcal{L}_{(\xi_{\theta},\eta_{\theta})},\tilde{\xi}_{\theta},\tilde{\eta}_{\theta})(\tilde{\xi}^1-\tilde{\xi}^2)\\
        &\qquad+\partial_{p\rho_2}\widehat{H}(\xi_{\theta},\eta_{\theta},\mathcal{L}_{(\xi_{\theta},\eta_{\theta})},\tilde{\xi}_{\theta},\tilde{\eta}_{\theta})(\tilde{\eta}^1-\tilde{\eta}^2)\Big]\Big\}\,d\theta.
    \end{aligned}
    \]
    Then we immediately see that (\ref{H displacement diff}) implies (\ref{H displacement int}) by choosing $\xi=\xi_{\theta},\,\eta=\eta_{\theta},\,\gamma=\xi^1-\xi^2,\,\zeta=\eta^1-\eta^2$. Conversely, setting $\xi^2=\xi,\,\xi^1=\xi+\epsilon\gamma,\,\eta^2=\eta,\,\eta^1=\eta+\epsilon\zeta$ and let $\epsilon\rightarrow 0$, we see that (\ref{H displacement int}) implies (\ref{H displacement diff}).
\end{proof}
\begin{remark}
    When the model does not involve a mean field term for the control, the problem reduces to a classical MFG. To compare with this degenerate case, we let $\widehat{H}_{x\rho_2}=\widehat{H}_{p\rho_2}=0$ in the monotonicity conditions (\ref{H LL diff}) and (\ref{H displacement diff}). 
    \par By completing the square for the $\widehat{H}_{pp}$ term, we observe that the displacement $\lambda$-monotonicity condition (\ref{H displacement diff}) is exactly the displacement monotonicity for the Hamiltonian defined in \cite{mou2022displacement} when $\lambda=0$.
    \par For the Lasry-Lions monotonicity, it is meaningless to discuss the degenerate case for a nonseparable Hamiltonian. Specifically, if we set $\widehat{H}_{x\rho_2}=\widehat{H}_{p\rho_2}=0$ in (\ref{H LL diff}), only the first-order term in $\gamma$ remains. Hence the inequality cannot hold unless $\widehat{H}_{p\rho_1}=0$, which is exactly the separable case. 
\end{remark}

\section{Propagation of the monotonicities}
The first step to global well-posedness is to show that either Lasry-Lions monotonicity or displacement $\lambda$-monotonicity is propagated along any classical solution $V$ to the master equation (\ref{master equation}). In this section, we shall establish the propagation of monotonicity conditions in their integral forms.
\par In \cite{mou2022propagation} the authors have proved the propagation of monotonicity conditions in their differential forms for the MFGC problem. However, in \cite{mou2022propagation}, and \cite{mou2022displacement} which studies standard MFG, it is assumed that $V$ is a classical solution to the master equation (\ref{master equation}) with further regularities $\partial_{xx}V(t,\cdot,\cdot)\in\mathcal{C}^2(\R^d\times\mathcal{P}_2(\R^d))$, $\partial_{x\mu}V(t,\cdot,\cdot,\cdot)\in\mathcal{C}^2(\R^d\times\mathcal{P}_2(\R^d)\times\R^d)$, and all of the second and higher-order derivatives involved above are uniformly bounded and continuous in $t$. With the new expressions of monotonicities in their integral forms, we can avoid differentiating the master equation so that the assumptions on higher-order derivatives may be dropped. In the later discussion, we can see that the proof of the Lipschitz continuity in measure also benefits from the integral forms. Therefore, unlike in \cite{mou2022displacement,mou2022propagation}, here only the $\mathcal{C}^2$-regularities for data are needed for the global well-posedness. 
\par The propagation of Lasry-Lions monotonicity in the integral form is given below.
\begin{theorem}\label{propagation LL}
    Suppose $V$ is a classical solution to the master equation (\ref{master equation}) with bounded $\partial_x V,\partial_{xx}V$ and $\partial_{x\mu}V$. Let Assumptions \ref{fixed point}, \ref{H regularity}(i) and \ref{assumption H LL} hold. If moreover $G$ satisfies the Lasry-Lions monotonicity condition (\ref{LL monotone}), then $V(t,\cdot,\cdot)$ also satisfies the Lasry-Lions monotonicity condition (\ref{LL monotone}) for all $t\in[t_0,T]$.
\end{theorem}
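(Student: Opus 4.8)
The plan is to propagate the monotonicity backward in time along the forward--backward characteristics of the master equation, i.e.\ through the McKean--Vlasov system (\ref{SDE})--(\ref{BSDE}), rather than by differentiating (\ref{master equation}) as in \cite{mou2022propagation}. Fix $t_0\in[0,T]$ and $\mu^1,\mu^2\in\mathcal P_2(\R^d)$; since $V$ solves the master equation on all of $[0,T]$ and does not depend on the choice of initial time, it suffices to prove $\int_{\R^d}\big(V(t_0,x,\mu^1)-V(t_0,x,\mu^2)\big)(\mu^1-\mu^2)(dx)\ge 0$. Pick $\xi^1,\xi^2\in\mathbb L^2(\mathcal F_{t_0}^1)$ with $\mathcal L_{\xi^i}=\mu^i$, driven by the same $B^0$ and $B$, and let $(X^i,\mu^i_t,\rho^i_t)$ solve (\ref{SDE}), where $\mu^i_t=\mathcal L_{X^i_t|\mathcal F^0_t}$ and $\rho^i_t=\mathcal L_{(X^i_t,\partial_x V(t,X^i_t,\mu^i_t))|\mathcal F^0_t}$. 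Well-posedness of this McKean--Vlasov SDE follows from the boundedness of $\partial_x V$ (so the momentum stays in a fixed ball) together with the Lipschitz continuity of $x\mapsto\partial_p\widehat H(x,\partial_x V(t,x,\mu_t),\rho_t)$ in state and measure, guaranteed by Assumption \ref{H regularity}(i) and the boundedness of $\partial_{xx}V,\partial_{x\mu}V$.

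The central object is the scalar function
\[
\Delta_t:=\E\big[V(t,X^1_t,\mu^1_t)-V(t,X^1_t,\mu^2_t)-V(t,X^2_t,\mu^1_t)+V(t,X^2_t,\mu^2_t)\big],\qquad t\in[t_0,T].
\]
Since $X^i_{t_0}=\xi^i$ has law $\mu^i$, one has $\Delta_{t_0}=\int_{\R^d}\big(V(t_0,x,\mu^1)-V(t_0,x,\mu^2)\big)(\mu^1-\mu^2)(dx)$, the quantity to be shown nonnegative. Conditioning on $\mathcal F_T^0$, using that $\mu^1_T,\mu^2_T$ are then deterministic, that $V(T,\cdot,\cdot)=G$, and the Lasry--Lions monotonicity (\ref{LL monotone}) of $G$ (applied to random variables with conditional laws $\mu^1_T,\mu^2_T$, which exist since $\mathcal F_T^1$ supports every law), gives $\Delta_T\ge 0$. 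The plan is therefore to show $t\mapsto\Delta_t$ is nonincreasing, whence $\Delta_{t_0}\ge\Delta_T\ge 0$; since $t_0$ is arbitrary, $V(t,\cdot,\cdot)$ is then Lasry--Lions monotone for every $t$.

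To compute $\tfrac{d}{dt}\Delta_t$ I would identify the drift of each of the four processes $V(t,X^i_t,\mu^j_t)$. The two diagonal ones come from the BSDE (\ref{BSDE}): $V(t,X^i_t,\mu^i_t)$ has drift $\partial_x V(t,X^i_t,\mu^i_t)\cdot\partial_p\widehat H\big(X^i_t,\partial_x V(t,X^i_t,\mu^i_t),\rho^i_t\big)-\widehat H\big(X^i_t,\partial_x V(t,X^i_t,\mu^i_t),\rho^i_t\big)$. For the cross term $V(t,X^1_t,\mu^2_t)$ I would apply the It\^o--Lions formula, noting that $X^1$ and the population generating $\mu^2$ interact only through the common noise $B^0$, and substitute $\partial_t V+\tfrac{\widehat\beta^2}{2}\tr(\partial_{xx}V)=-\widehat H-\mathcal M V$ from (\ref{master equation}). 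The key point is that every term of $\mathcal M V(t,X^1_t,\mu^2_t)$---the $\partial_{\tilde x\mu}V$, $\partial_{x\mu}V$ and $\partial_{\mu\mu}V$ pieces and the nonlocal transport term $\partial_\mu V\cdot\partial_p\widehat H$---cancels exactly against the term produced by the evolution of $\mu^2_t$, its quadratic variation, and the $X^1$--$\mu^2$ cross-variation; what remains is $\partial_x V(t,X^1_t,\mu^2_t)\cdot\partial_p\widehat H\big(X^1_t,\partial_x V(t,X^1_t,\mu^1_t),\rho^1_t\big)-\widehat H\big(X^1_t,\partial_x V(t,X^1_t,\mu^2_t),\rho^2_t\big)$, and symmetrically for $V(t,X^2_t,\mu^1_t)$. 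Assembling the four drifts, taking expectation to kill the martingale parts, and using $X^i_t|\mathcal F^0_t\sim\mu^i_t$ together with $\rho^i_t=(id,\partial_x V(t,\cdot,\mu^i_t))\#\mu^i_t$, one recognizes $\tfrac{d}{dt}\Delta_t$ as precisely $-\E$ of the left-hand side of the integral Lasry--Lions inequality (\ref{H LL int}), applied conditionally on $\mathcal F^0_t$ to $\mu^1_t,\mu^2_t$ and the map $\varphi(x,\mu):=\partial_x V(t,x,\mu)$, which for fixed $t$ lies in $\mathcal C^1(\R^d\times\mathcal P_2(\R^d);\R^d)$ with bounded $\varphi,\partial_x\varphi$. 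Hence $\tfrac{d}{dt}\Delta_t\le 0$ by Assumption \ref{assumption H LL}.

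The main obstacle is the It\^o--Lions expansion of the cross term $V(t,X^1_t,\mu^2_t)$ with common noise and checking that, after invoking the master equation, it collapses exactly to the two-argument drift above: one must verify that the $\beta^2\partial_{x\mu}V$ common-noise cross-variation and the nonlocal $\partial_\mu V\cdot\partial_p\widehat H$ transport term match the corresponding pieces of $\mathcal M V$ evaluated at the ``wrong'' measure $\mu^2_t$, using the definition of $\rho^2_t$ as a pushforward of $\mu^2_t$. Beyond that, one must justify differentiating $\Delta_t$ under the expectation and the integrability of the drift terms; this rests on the uniform bounds on $\partial_x V,\partial_{xx}V,\partial_{x\mu}V$, the local regularity of $\widehat H$ in Assumption \ref{H regularity}(i), and standard second-moment estimates for $X^i_t$.
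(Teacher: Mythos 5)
Your proposal is correct and follows essentially the same route as the paper: both propagate the monotonicity along the McKean--Vlasov characteristics of (\ref{SDE}), compute the drifts of the four quantities $V(t,X^i_t,\mu^j_t)$, use the Lasry--Lions monotonicity of $G$ at the terminal time, and apply the integral form (\ref{H LL int}) with $\varphi(x,\mu)=\partial_x V(t,x,\mu)$ to sign the time derivative. The only cosmetic difference is that the paper obtains the cross-term drift via the It\^o--Wentzell formula applied to the MFGC system solution $u^i(t,x)=V(t,x,\mu^i_t)$, whereas you expand $V$ directly with the It\^o--Lions formula and cancel against $\mathcal{M}V$ from the master equation --- the same computation packaged differently.
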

\begin{proof}
    For any $\xi^i\in\mathbb{L}^2(\mathcal{F}_{t_0}),\,i=1,2$, let $X_t^i$ be the strong solution to the SDE
    \begin{equation}
        X_t^i=\xi^i+\int_{t_0}^t\partial_p\widehat{H}\big(X_s^i,\partial_x V(s,X_s^i,\mu_s^i),\rho_s^i\big)\,ds+B_t^{t_0}+\beta B_t^{0,t_0},
    \end{equation}
    where $\mu_s^i:=\mathcal{L}_{X_s^i|\mathcal{F}_s^0}$ and $\rho_s^i:=\mathcal{L}_{(X_s^i,\partial_x V(s,X_s^i,\mu_s^i))|\mathcal{F}_s^0}$. Define $u^i(t,x):=V(t,x,\mu_t^i)$ and $v^i(t,x):=\beta\E_{\mathcal{F}_t^0}[\partial_{\mu}V(t,x,\mu_t^i,\tilde{X}_t^i)]$, where $\tilde{X}^i$ is an independent copy of $X^i$ conditioning on $\mathbb{F}^0$. Then $(\mu^i,u^i,v^i)$ is a classical solution to the MFGC system (\ref{FBSPDE}) with initial condition $\mu^i_{t_0}=\mathcal{L}_{\xi^i}$.
    \par Using the equation satisfied by $u^i$, by the It\^{o}-Wentzell formula, we compute that for $i,j=1,2$,
    \[
    \begin{aligned}
        du^i(t,X_t^j)=&\big[\partial_x u^i(t,X_t^j)\cdot\partial_p\widehat{H}\big(X_t^j,\partial_x u^j(t,X_t^j),\rho_t^j\big)-\widehat{H}\big(X_t^j,\partial_x u^i(t,X_t^j),\rho_t^i\big)\big]\,dt\\
        &+\partial_x u^i(t,X_t^j)\cdot d B_t+\big(v^i(t,X_t^j)+\beta\partial_x u^i(t,X_t^j)\big)\cdot dB_t^0.
    \end{aligned}
    \]
    Hence, by the Lasry-Lions monotonicity conditions (\ref{LL monotone}) for $G$ and (\ref{H LL int}) for $\widehat{H}$, we obtain
    \[
    \begin{aligned}
        &\E[u^1(t,X_t^1)+u^2(t,X_t^2)-u^1(t,X_t^2)-u^2(t,X_t^1)]\\
        =&\int_{\R^d}\big(G(x,\mu_T^1)-G(x,\mu_T^2)\big)\,(\mu_T^1-\mu_T^2)(dx)\\
        &+\int_t^T\Big[\int_{\R^d}\big(\widehat{H}(x,\partial_x u^1(s,x),\rho_s^1)-\widehat{H}(x,\partial_x u^2(s,x),\rho_s^2)\big)\,(\mu_s^1-\mu_s^2)(dx)\\
        &-\int_{\R^d}\big(\partial_x u^1(s,x)-\partial_x u^2(s,x)\big)\\
        &\qquad\cdot\big[\partial_p\widehat{H}(x,\partial_x u^1(s,x),\rho_s^1)\,\mu_s^1(dx)-\partial_p\widehat{H}(x,\partial_x u^2(s,x),\rho_s^2)\,\mu_s^2(dx)\big]\Big]\,ds\\
        \geq &0,
    \end{aligned}
    \]
    which is exactly the Lasry-Lions monotonicity condition for $V(t,\cdot,\cdot)$.
\end{proof}
\par We next introduce the propagation of displacement $\lambda$-monotonicity in the integral form.
\begin{theorem}\label{propagation disp}
    Suppose that $V$ is a classical solution to the master equation (\ref{master equation}) with bounded $\partial_x V,\partial_{xx}V$ and $\partial_{x\mu}V$. Let Assumptions \ref{fixed point}, \ref{H regularity}(i) and \ref{assumption H disp} hold. If moreover $G$ satisfies the displacement $\lambda$-monotonicity condition (\ref{disp monotone}), then $V(t,\cdot,\cdot)$ also satisfies the displacement $\lambda$-monotonicity condition (\ref{disp monotone}) for all $t\in[t_0,T]$.
\end{theorem}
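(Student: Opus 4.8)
The plan is to run the monotonicity argument of Theorem~\ref{propagation LL} at the level of the gradient $\partial_x V$, using the costate equation in place of the equation for $V$ itself. Fix $t\in[t_0,T]$. Since the quantity in (\ref{disp monotone}) depends only on the joint law $\mathcal{L}_{(\xi^1,\xi^2)}$ and $\mathcal{F}_t^1\supseteq\mathcal{F}_0^1$ is atomless, it suffices to verify (\ref{disp monotone}) for $V(t,\cdot,\cdot)$ over pairs $\xi^1,\xi^2\in\mathbb{L}^2(\mathcal{F}_t^1)$. For such a pair I would solve the McKean--Vlasov FBSDE (\ref{SDE})--(\ref{BSDE}) on $[t,T]$ with initial data $X_t^i=\xi^i$, producing $X^i$, $\mu_s^i:=\mathcal{L}_{X_s^i\mid\mathcal{F}_s^0}$, $Z_s^i:=\partial_x V(s,X_s^i,\mu_s^i)$ and $\rho_s^i:=\mathcal{L}_{(X_s^i,Z_s^i)\mid\mathcal{F}_s^0}$; at $s=t$ one has $\mu_t^i=\mathcal{L}_{\xi^i}$ and $Z_t^i=\partial_x V(t,\xi^i,\mathcal{L}_{\xi^i})$, so (\ref{disp monotone}) for $V(t,\cdot,\cdot)$ is exactly the assertion $I(t)\ge 0$, where $I(s):=\E\big[\langle Z_s^1-Z_s^2,X_s^1-X_s^2\rangle+\lambda|X_s^1-X_s^2|^2\big]$. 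Well-posedness and uniform second-moment bounds for $(X^i,Z^i)$ follow from Assumption~\ref{H regularity} together with the assumed boundedness of $\partial_x V$ and $\partial_{xx}V$, which in particular keeps $Z^i$ in a fixed ball so that only the local-in-$p$ bounds on $\widehat H$ are used (cf.\ Proposition~\ref{Vxx bounded}).

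The key ingredient is the costate (adjoint) equation: $Z_s^i$ solves
\[
 dZ_s^i=-\partial_x\widehat H(X_s^i,Z_s^i,\rho_s^i)\,ds+(\text{martingale}),\qquad Z_T^i=\partial_x G(X_T^i,\mu_T^i).
\]
I would derive this by applying the It\^o--Wentzell formula to $\partial_x u^i(\cdot,\cdot)$ along the flow of $X^i$, where $u^i(s,\cdot):=V(s,\cdot,\mu_s^i)$ is the classical solution of the MFGC system (\ref{FBSPDE}); the decisive cancellation is that the term $\partial_{xx}u^i\,\partial_p\widehat H$ generated by the transport part of It\^o--Wentzell offsets the identical term (with opposite sign) that appears when $\partial_x$ hits $\widehat H(x,\partial_x u^i,\rho_s^i)$ in the $u^i$-equation, and similarly the Laplacian and $\tr\partial_x v^i$ terms cancel, leaving only the drift $-\partial_x\widehat H$. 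Since only this drift (not the martingale part, and not the third-order $x$-derivatives that appear in the intermediate computation) enters the rest of the proof, this identity can be made rigorous under the present $\mathcal{C}^2$ regularity by a standard mollification of the data combined with BSDE stability, or by invoking the Pontryagin adjoint representation for the MFGC problem.

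Granting this, I would differentiate $I$. Because $X^1$ and $X^2$ are driven by the same Brownian motions with identical constant diffusion coefficients, $\Delta X_s:=X_s^1-X_s^2$ has absolutely continuous paths with $d\Delta X_s=\big[\partial_p\widehat H(X_s^1,Z_s^1,\rho_s^1)-\partial_p\widehat H(X_s^2,Z_s^2,\rho_s^2)\big]\,ds$; hence all cross-variation and It\^o-correction terms disappear from the product rule, the martingale parts of $Z^i$ integrate against the adapted process $\Delta X_s$ to zero expectation, and
\[
 \frac{d}{ds}I(s)=-\E\Big[\big\langle\Delta\partial_x\widehat H_s,\Delta X_s\big\rangle-\big\langle\Delta\partial_p\widehat H_s,\Delta Z_s\big\rangle-2\lambda\big\langle\Delta\partial_p\widehat H_s,\Delta X_s\big\rangle\Big],
\]
with $\Delta\partial_x\widehat H_s:=\partial_x\widehat H(X_s^1,Z_s^1,\rho_s^1)-\partial_x\widehat H(X_s^2,Z_s^2,\rho_s^2)$, $\Delta Z_s:=Z_s^1-Z_s^2$, and $\Delta\partial_p\widehat H_s$ analogous. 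Conditioning on $\mathcal{F}_s^0$ and applying Assumption~\ref{assumption H disp}, i.e.\ (\ref{H displacement int}), to the $\mathbb{P}_1$-sections of $(X_s^i,Z_s^i)$ (whose conditional laws are $\rho_s^i$) shows the bracket is nonnegative, so $I$ is nonincreasing on $[t,T]$. Since $I(T)=\E\big[\langle\partial_x G(X_T^1,\mu_T^1)-\partial_x G(X_T^2,\mu_T^2),X_T^1-X_T^2\rangle+\lambda|X_T^1-X_T^2|^2\big]\ge 0$ by applying the displacement $\lambda$-monotonicity (\ref{disp monotone}) of $G$ conditionally on $\mathcal{F}_T^0$ and then integrating, we obtain $I(t)\ge I(T)\ge 0$, which is (\ref{disp monotone}) for $V(t,\cdot,\cdot)$. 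I expect the only real obstacle to be the rigorous justification of the costate equation with merely $\mathcal{C}^2$ data, since the remaining steps are routine stochastic calculus.
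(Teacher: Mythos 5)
Your proposal is correct and follows essentially the same route as the paper: you run the equilibrium flow started from $\xi^1,\xi^2$, identify $Z^i_s=\partial_x u^i(s,X^i_s)$ as the solution of the adjoint FBSDE with driver $\partial_x\widehat H$ and terminal datum $\partial_x G$ (via It\^o--Wentzell plus a mollification argument, exactly as in the paper, which cites \cite[Proposition 3.3]{bansil2025degenerate}), and then apply the displacement $\lambda$-monotonicity of $G$ at $T$ and Assumption \ref{assumption H disp} along the flow to conclude $I(t)\ge 0$. The only differences are presentational — you phrase it as a differential inequality for $I(s)$ rather than the integrated identity, and you make explicit the conditioning on $\mathcal{F}^0_s$ and the reduction to $\mathcal{F}^1_t$-measurable initial data, which the paper leaves implicit.
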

\begin{proof}
    We follow the notations in Theorem \ref{propagation LL}. In addition, we define $Y_t^i:=\partial_x u^i(t,X_t^i),\,Z_t^i:=\partial_{xx}u^i(t,X_t^i),\,Z_t^{0,i}:=\beta\partial_{xx}u^i(t,X_t^i)+\partial_x v^i(t,X_t^i)$. Using the MFGC system (\ref{FBSPDE}), we can verify that $(X^i,Y^i,Z^i,Z^{0,i})$ is a strong solution to the FBSDE:
    \[
    \left\{\begin{aligned}
        &X_t^i=\xi^i+\int_{t_0}^t\partial_p\widehat{H}\big(X_s^i,Y_s^i,\rho_s^i\big)\,ds+B_t^{t_0}+\beta B_t^{0,t_0},\\
        &Y_t^i=\partial_x G(X_T^i,\mu_T^i)+\int_t^T\partial_x\widehat{H}(X_s^i,Y_s^i,\rho_s^i)\,ds-\int_t^T Z_s^i\cdot dB_s-\int_t^T Z_s^{0,i}\cdot dB_s^0.
    \end{aligned}\right.
    \]
    Note that we cannot directly apply It\^{o}-Wentzell's formula to $\partial_x u^i$ due to the lack of regularities, so a standard mollification argument is needed here. We refer to \cite[Proposition 3.3]{bansil2025degenerate} for details.
    \par Now applying It\^{o}'s formula and taking the expectation, we obtain that
    \[
    \begin{aligned}
        &\E\Big[\big\langle\partial_x V(t,X_t^1,\mu_t^1)-\partial_x V(t,X_t^2,\mu_t^2),X_t^1-X_t^2\big\rangle+\lambda|X_t^1-X_t^2|^2\Big]\\
        =&\E\Big[\big\langle Y_t^1-Y_t^2,X_t^1-X_t^2\big\rangle+\lambda|X_t^1-X_t^2|^2\Big]\\
        =&\E\Big[\big\langle\partial_x G(X_T^1,\mu_T^1)-\partial_x G(X_T^2,\mu_T^2),X_T^1-X_T^2\big\rangle+\lambda|X_T^1-X_T^2|^2\Big]\\
        &+\int_t^T\E\Big[\big\langle\partial_x\widehat{H}(X_s^1,Y_s^1,\rho_s^1)-\partial_x\widehat{H}(X_s^2,Y_s^2,\rho_s^2),X_s^1-X_s^2\big\rangle\\
        &\qquad-\big\langle\partial_p\widehat{H}(X_s^1,Y_s^1,\rho_s^1)-\partial_p\widehat{H}(X_s^2,Y_s^2,\rho_s^2),Y_s^1-Y_s^2\big\rangle\\
        &\qquad-2\lambda\big\langle\partial_p\widehat{H}(X_s^1,Y_s^1,\rho_s^1)-\partial_p\widehat{H}(X_s^2,Y_s^2,\rho_s^2),X_s^1-X_s^2\big\rangle\Big]\,ds\\
        \geq&0,
    \end{aligned}
    \]
    where we use the displacement $\lambda$-monotonicity condition (\ref{disp monotone}) for $G$ and (\ref{H displacement int}) for $\widehat{H}$. This is exactly the displacement $\lambda$-monotonicity condition for $V(t,\cdot,\cdot)$.
\end{proof}

\section{The uniform Lipschitz continuity in the measure variable}
In this section we will show that, if a classical solution $V$ to the master equation satisfies displacement $\lambda$-monotonicity condition, then $\partial_x V$ is always uniformly $W_2$-Lipschitz continuous in the measure variable. Moreover, with the similar method we can show that if $V$ satisfies Lasry-Lions monotonicity condition, then $\partial_x V$ is uniformly $W_1$-Lipschitz continuous. Using the monotonicities in their integral forms allows us to bypass differentiating the master equation to obtain the Lipschitz estimate, thereby requiring less regularity assumptions on data.
\subsection{The \texorpdfstring{$W_2$}{W2}-Lipschitz continuity under displacement \texorpdfstring{$\lambda$}{lambda}-monotonicity condition}
We first show that the displacement $\lambda$-monotonicity of $V(t,\cdot,\cdot)$ implies the uniform Lipschitz continuity of $\partial_x V$ in the measure variable under $W_2$.
\begin{theorem}
\label{W2 displacement}
    Suppose $V$ is a classical solution to the master equation (\ref{master equation}) with bounded $\partial_x V,\partial_{xx}V$ and $\partial_{x\mu}V$. Let Assumptions \ref{fixed point},  \ref{G regularity} and \ref{H regularity}(i)(iii) hold. Furthermore, suppose that $V(t,\cdot,\cdot)$ satisfies the displacement $\lambda$-monotonicity condition (\ref{disp monotone}). Then $\partial_x V$ is uniformly Lipschitz continuous with respect to $\mu$ under $W_2$, with the Lipschitz constant depending only on $d,T,\lambda,\|\partial_x V\|_{L^{\infty}},\|\partial_{xx}V\|_{L^{\infty}}$ and $L_x^G,L_{\mu}^G$ in Assumption \ref{G regularity}, $L^H,C_1$ in Assumption \ref{H regularity}.
\end{theorem}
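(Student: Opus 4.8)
The strategy is to realize $\partial_x V$ along the McKean--Vlasov FBSDE attached to the master equation — the one appearing in the proof of Theorem~\ref{propagation disp} — started from two initial laws, and to extract the estimate from the competition between the assumed displacement $\lambda$-monotonicity of $V(t,\cdot,\cdot)$ and the strict $p$-concavity of $\widehat H$ from Assumption~\ref{H regularity}(iii). Fix $t_0\in[0,T]$, $\mu^1,\mu^2\in\mathcal P_2(\R^d)$, and $\xi^i\in\mathbb L^2(\mathcal F_{t_0}^1)$ with $\mathcal L_{\xi^i}=\mu^i$; let $(X^i,Y^i,Z^i,Z^{0,i})$ solve that FBSDE with $X^i_{t_0}=\xi^i$, so that $Y^i_t=\partial_x V(t,X^i_t,\mu^i_t)$ and $Y^i_{t_0}=\partial_x V(t_0,\xi^i,\mu^i)$, with $\mu^i_t=\mathcal L_{X^i_t|\mathcal F_t^0}$ and $\rho^i_t=\mathcal L_{(X^i_t,Y^i_t)|\mathcal F_t^0}$. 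Write $\delta X:=X^1-X^2$ and $\delta Y:=Y^1-Y^2$; crucially $\delta X$ carries no martingale part, since the idiosyncratic and common Brownian increments cancel in the difference, so no covariation term enters $d\langle\delta Y_t,\delta X_t\rangle$.

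Set $g(t):=\E\big[\langle\delta Y_t,\delta X_t\rangle+\lambda|\delta X_t|^2\big]$. Applying the assumed displacement $\lambda$-monotonicity of $V(t,\cdot,\cdot)$ conditionally on $\mathcal F^0_t$ (legitimate since $\mu^1_t,\mu^2_t$ are $\mathcal F^0_t$-measurable) and then integrating gives $g(t)\ge 0$ for every $t\in[t_0,T]$; at $t=T$ this is displacement $\lambda$-monotonicity of $G=V(T,\cdot,\cdot)$, and, since $G\in\mathcal C^2$ with $|\partial_{xx}G|\le L_x^G$ and $|\partial_{x\mu}G|\le L_\mu^G$ (cf.\ Remark~\ref{G regularity remark}), one also gets the opposite bound $g(T)\le (L_x^G+L_\mu^G+\lambda)\,\E|\delta X_T|^2$. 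Differentiating, with $\widehat H^i:=\widehat H(X^i_t,Y^i_t,\rho^i_t)$,
\[
g'(t)=\E\big[-\langle\partial_x\widehat H^1-\partial_x\widehat H^2,\ \delta X_t\rangle+\langle\delta Y_t+2\lambda\delta X_t,\ \partial_p\widehat H^1-\partial_p\widehat H^2\rangle\big].
\]
The decisive term is $\E\langle\delta Y_t,\partial_p\widehat H^1-\partial_p\widehat H^2\rangle$: expanding along the segment joining $(X^2_t,Y^2_t,\rho^2_t)$ to $(X^1_t,Y^1_t,\rho^1_t)$ and combining $\partial_{pp}\widehat H\le-c_0I_d$ with $|\partial_{p\rho_2}\widehat H|\le c_1$ exactly as in Remark~\ref{remark H regularity}(ii) produces a leading part $\le -C_1\,\E|\delta Y_t|^2$, $C_1=c_0-c_1>0$, while all remaining contributions (those carrying $\partial_x\widehat H$, $\partial_{xp}\widehat H$, $\partial_{p\rho_1}\widehat H$, bounded by $L^H(R)$ with $R=\|\partial_x V\|_{L^\infty}$ fixed) are absorbed by Young's inequality, yielding $g'(t)\le-\tfrac{C_1}{2}\E|\delta Y_t|^2+C\,\E|\delta X_t|^2$ with $C$ depending only on the data in the statement. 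Combining this with $g(t)\ge 0$ at every intermediate time, the terminal bound on $g(T)$, and the forward estimate $\E|\delta X_t|^2\le C\big(\E|\delta\xi|^2+\int_{t_0}^t\E|\delta Y_s|^2\,ds\big)$ coming from the $\delta X$-equation and the local Lipschitz bounds, one runs a Grönwall argument to obtain $\sup_{t\in[t_0,T]}\E|\delta X_t|^2+\int_{t_0}^T\E|\delta Y_t|^2\,dt\le C\,\E|\delta\xi|^2$. Reading off the $t=t_0$ slice for arbitrary couplings $\xi^1,\xi^2$ (an optimal $W_2$-coupling of $\mu^1,\mu^2$ in particular) gives $\E|\partial_x V(t_0,\xi^1,\mu^1)-\partial_x V(t_0,\xi^2,\mu^2)|^2\le C\,\E|\xi^1-\xi^2|^2$, the claimed uniform $W_2$-Lipschitz continuity of $\partial_x V$ in $\mu$; the pointwise-in-$x$ form follows by additionally using $\|\partial_{xx}V\|_{L^\infty}<\infty$ (or via the pointwise representation of $\partial_{x\mu}V$).

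I expect the delicate point to be precisely the last Grönwall step: the output constant must depend only on $d,T,\lambda,\|\partial_x V\|_{L^\infty},\|\partial_{xx}V\|_{L^\infty},L_x^G,L_\mu^G,L^H,C_1$ and \emph{not} on $\|\partial_{x\mu}V\|_{L^\infty}$, which is a priori finite but not quantitatively controlled and is essentially the quantity being computed. A crude Grönwall on the forward $\delta X$-equation reintroduces exactly such a dependence and is circular; the resolution must play the coercive term $-\tfrac{C_1}{2}\E|\delta Y_t|^2$, available only through the \emph{strict} concavity $C_1>0$, against the nonnegativity of $g$ used at \emph{all} times $t\in[t_0,T]$, not merely at $t_0$ and $T$, so that $\int_{t_0}^T\E|\delta Y_t|^2\,dt$ can genuinely be absorbed. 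Keeping careful track of the conditioning on the common noise $\mathcal F^0_t$ (so that the monotonicity and concavity inequalities hold pathwise for the conditional laws $\mu^i_t$, $\rho^i_t$) is a further, routine, complication.
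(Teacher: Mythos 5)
Your setup and the forward computation coincide with the paper's: the same FBSDE realization $Y^i_t=\partial_x V(t,X^i_t,\mu^i_t)$, the same quantity $g(t)=\E[\langle\delta Y_t,\delta X_t\rangle+\lambda|\delta X_t|^2]$, the use of $g(t)\geq 0$ from the assumed monotonicity of $V(t,\cdot,\cdot)$, and the coercive term $-C_1\E|\delta Y_t|^2$ from Assumption \ref{H regularity}(iii). The gap is in how you close the estimate. Your claimed Gr\"onwall output $\sup_t\E|\delta X_t|^2+\int_{t_0}^T\E|\delta Y_t|^2\,dt\leq C\,\E|\delta\xi|^2$ requires a bound on $g(t_0)$, and $g(t_0)$ contains $\E\langle\partial_x V(t_0,\xi^1,\mu^1)-\partial_x V(t_0,\xi^2,\mu^2),\xi^1-\xi^2\rangle$; after splitting off the $x$-increment using $\|\partial_{xx}V\|_{L^\infty}$, what remains is $\E[|\partial_x V(t_0,\xi^2,\mu^1)-\partial_x V(t_0,\xi^2,\mu^2)|^2]^{1/2}(\E|\delta\xi|^2)^{1/2}$, i.e.\ precisely the $W_2$-Lipschitz modulus being proved. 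You never say how this term is controlled, so the circularity sits exactly where you did not place it: the forward $\delta X$-equation is in fact harmless (one keeps the coefficient difference as $\delta Y_s$ and uses the Lipschitz bounds of $\partial_p\widehat H$ from Assumption \ref{H regularity}(i); no $\partial_{x\mu}V$ is needed there), whereas $g(t_0)$ is not. Moreover, even granting the integrated bound, ``reading off the $t=t_0$ slice'' is not legitimate: $\int_{t_0}^T\E|\delta Y_t|^2\,dt$ controls $\delta Y$ only in time average and yields no information at the single time $t_0$; and upgrading an $L^2(\mu^2)$-in-$x$ bound to the pointwise-in-$x$ statement via $\|\partial_{xx}V\|_{L^\infty}$ alone does not work either.

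The paper closes both holes with one additional ingredient absent from your proposal: a backward, $L^\infty$-type comparison in the spirit of \cite[Proposition 3.4]{bansil2025degenerate},
\[
\sup_{t\in[t_0,T]}\E\|\partial_x V(t,\cdot,\mu_t^1)-\partial_x V(t,\cdot,\mu_t^2)\|_{L^{\infty}}\leq C\sup_{t\in[t_0,T]}\E\,W_2(\mu_t^1,\mu_t^2),
\]
obtained by comparing the backward equations for $\partial_x u^1,\partial_x u^2$ and using the $W_2$-Lipschitz continuity of $\partial_x G$ in $\mu$ (this is where $L_\mu^G$ actually enters the constant; in your write-up it only appears in the unused terminal bound on $g(T)$). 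With this estimate, the forward inequality $\E|\delta X_t|^2\leq C\E|\delta\xi|^2+C\,\E[|\partial_x V(t_0,\xi^2,\mu^1)-\partial_x V(t_0,\xi^2,\mu^2)|^2]^{1/2}(\E|\delta\xi|^2)^{1/2}$ and a Young-inequality bootstrap give $\sup_t\E W_2(\mu_t^1,\mu_t^2)\leq C(\E|\delta\xi|^2)^{1/2}$, and the same backward estimate evaluated along an optimal coupling at $t_0$ delivers the stated $W_2$-Lipschitz continuity of $\partial_x V$. Without such a backward comparison, your scheme produces neither the time-$t_0$ bound nor a constant free of the unknown Lipschitz modulus, so as written the proof does not go through.
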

\begin{proof}
    Recall the notations in Theorem \ref{propagation LL} and Theorem \ref{propagation disp}. We denote $\partial_x V_t^1:=\partial_x V(t,X_t^1,\mu_t^1),\partial_x V_t^2:=\partial_x V(t,X_t^2,\mu_t^2)$.  Following the computations in Theorem \ref{propagation disp}, we have
    \[
    \begin{aligned}
        &\E\Big[\big\langle\partial_x V(t,X_t^1,\mu_t^1)-\partial_x V(t,X_t^2,\mu_t^2),X_t^1-X_t^2\big\rangle+\lambda|X_t^1-X_t^2|^2\Big]\\
        =&\E\Big[\big\langle\partial_x V(t_0,\xi^1,\mu_{t_0}^1)-\partial_x V(t_0,\xi^2,\mu_{t_0}^2),\xi^1-\xi^2\big\rangle+\lambda|\xi^1-\xi^2|^2\Big]\\
        &-\int_{t_0}^t\E\Big[\big\langle\partial_x\widehat{H}(X_s^1,\partial_x V_s^1,\rho_s^1)-\partial_x\widehat{H}(X_s^2,\partial_x V_s^2,\rho_s^2),X_s^1-X_s^2\big\rangle\\
        &\qquad-\big\langle\partial_p\widehat{H}(X_s^1,\partial_x V_s^1,\rho_s^1)-\partial_p\widehat{H}(X_s^2,\partial_x V_s^2,\rho_s^2),\partial_x V_s^1-\partial_x V_s^2\big\rangle\\
        &\qquad-2\lambda\big\langle\partial_p\widehat{H}(X_s^1,\partial_x V_s^1,\rho_s^1)-\partial_p\widehat{H}(X_s^2,\partial_x V_s^2,\rho_s^2),X_s^1-X_s^2\big\rangle\Big]\,ds\\
        \leq&\E\Big[\big\langle\partial_x V(t_0,\xi^1,\mu_{t_0}^1)-\partial_x V(t_0,\xi^2,\mu_{t_0}^2),\xi^1-\xi^2\big\rangle+\lambda|\xi^1-\xi^2|^2\Big]\\
        &+\int_{t_0}^t\Big\{ C\E[|X_s^1-X_s^2|^2]+C\E[|X_s^1-X_s^2||\partial_x V_s^1-\partial_x V_s^2|]\\
        &\qquad-C_1\E[|\partial_x V_s^1-\partial_x V_s^2|^2]\Big\}\,ds,
    \end{aligned}
    \]
    where we use Assumption \ref{H regularity}(iii) (see also Remark \ref{remark H regularity}(ii)) in the last inequality. Since $V$ satisfies the displacement $\lambda$-monotonicity condition, applying Young's inequality, we obtain
    \[
    \begin{aligned}
        &\int_{t_0}^t \E[|\partial_x V(s,X_s^1,\mu_s^1)-\partial_x V(s,X_s^2,\mu_s^2)|^2]ds \\
        \leq& C\E\Big[\big\langle\partial_x V(\xi^1,\mu_{t_0}^1)-\partial_x V(\xi^2,\mu_{t_0}^2),\xi^1-\xi^2\big\rangle+\lambda|\xi^1-\xi^2|^2\Big]+C\int_{t_0}^t\E[|X_s^1-X_s^2|^2]\,ds\\
        \leq& C\E[|\xi^1-\xi^2|^2]+C\E[|\partial_x V(\xi^2,\mu_{t_0}^1)-\partial_x V(\xi^2,\mu_{t_0}^2)|^2]^{\frac12}\E[|\xi^1-\xi^2|^2]^{\frac12}\\
        &+C\int_{t_0}^t\E[|X_s^1-X_s^2|^2]\,ds.
    \end{aligned}
    \]
    Using the above estimate and combining with the SDE satisfied by $X^1,X^2$, we have
    \begin{equation}
    \label{disp difference mu}
    \begin{aligned}
        &\E[W_2^2(\mu_t^1,\mu_t^2)]\leq\E[|X_t^1-X_t^2|^2]\\
        \leq& C\E[|\xi^1-\xi^2|^2]+C\E[|\partial_x V(\xi^2,\mu_{t_0}^1)-\partial_x V(\xi^2,\mu_{t_0}^2)|^2]^{\frac12}\E[|\xi^1-\xi^2|^2]^{\frac12}.
    \end{aligned}
    \end{equation}
    Moreover, following the proof of \cite[Proposition 3.4]{bansil2025degenerate}, we can directly compute that
    \begin{equation}
    \label{disp difference Vx}
        \sup_{t\in[t_0,T]}\E\|\partial_x V(t,\cdot,\mu_t^1)-\partial_x V(t,\cdot,\mu_t^2)\|_{L^{\infty}}\leq C\sup_{t\in[t_0,T]}\E W_2(\mu_t^1,\mu_t^2).
    \end{equation}
    Plugging (\ref{disp difference Vx}) into (\ref{disp difference mu}), we obtain
    \begin{equation}
        \sup_{t\in[t_0,T]}\E W_2(\mu_t^1,\mu_t^2)\leq C\Big(\E[|\xi^1-\xi^2|^2]\Big)^{\frac12}.
    \end{equation}
    We can choose $\xi^1,\xi^2$ such that $W_2(\mu_{t_0}^1,\mu_{t_0}^2)=\E[|\xi^1-\xi^2|^2]^{\frac12}$. Then, from (\ref{disp difference Vx}) we conclude that
    \[
     \sup_{t\in[t_0,T]}\E\|\partial_x V(t,\cdot,\mu_t^1)-\partial_x V(t,\cdot,\mu_t^2)\|_{L^{\infty}}\leq C W_2(\mu_{t_0}^1,\mu_{t_0}^2),
    \]
    which implies the $W_2$-Lipschitz continuity of $\partial_x V$ at $t_0$.
\end{proof}

\subsection{The \texorpdfstring{$W_1$}{W1}-Lipschitz continuity under Lasry-Lions monotonicity condition}
\par The following theorem can be viewed as an analogue of Theorem \ref{W2 displacement} under the setting of Lasry-Lions monotonicity. However, in this case we can directly obtain the $W_1$-Lipschitz estimate.

\begin{theorem}
\label{W1 LL}
    Suppose $V$ is a classical solution to the master equation (\ref{master equation}) with bounded $\partial_x V,\partial_{xx}V$ and $\partial_{x\mu}V$. Let Assumptions \ref{fixed point}, \ref{G regularity} and \ref{H regularity}(i)(iii) hold. Assume further $V(t,\cdot,\cdot)$ satisfies the Lasry-Lions monotonicity condition (\ref{LL monotone}).
    Then $\partial_x V$ is uniformly Lipschitz continuous in $\mu$ under $W_1$, where the Lipschitz constant depends only on $d, T, \|\partial_x V\|_{L^{\infty}},\|\partial_{xx}V\|_{L^{\infty}}$, and $L_x^G,L_{\mu}^G$ in Assumption \ref{G regularity}, $L^H,C_1$ in Assumption \ref{H regularity}.
\end{theorem}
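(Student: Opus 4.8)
The plan is to follow the blueprint of Theorem \ref{W2 displacement}, with the propagation of Lasry--Lions monotonicity (Theorem \ref{propagation LL}) taking over the role played there by the propagation of displacement $\lambda$-monotonicity. The reason one lands on $W_1$ directly is that the only measure-dependent data constant entering the argument is $L^G_\mu=|\partial_{x\mu}G|$, i.e. precisely the $W_1$-Lipschitz constant of $\partial_x G$ in $\mu$ (Remark \ref{G regularity remark}), while $\partial_x\widehat H$ and $\partial_p\widehat H$ are $W_1$-Lipschitz in $\rho$ by Assumption \ref{H regularity}(i).

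Concretely, fix $t_0$ and $\mu^1,\mu^2\in\mathcal{P}_2(\R^d)$, put $D:=W_1(\mu^1,\mu^2)$, take $\xi^i\in\mathbb{L}^2(\mathcal{F}_{t_0})$ with $\mathcal{L}_{\xi^i}=\mu^i$ forming an optimal $W_1$-coupling, and run the MFGC system (and the FBSDE of Theorem \ref{propagation disp}) from each $\xi^i$ with common Brownian motions, producing $X^i,\mu^i_t,\rho^i_t,u^i$, $\delta X_t:=X^1_t-X^2_t$ and $\delta_xV(s):=\partial_xV(s,\cdot,\mu^1_s)-\partial_xV(s,\cdot,\mu^2_s)$. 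The first step is the uniform $L^\infty$ transfer: working with the \emph{decoupled} FBSDE (\ref{decoupled FBSDE}) and differentiating it in $x$ after the standard mollification (as in \cite[Proposition 3.3]{bansil2025degenerate}), the linear backward equation solved by $\partial_x(u^1-u^2)$ has zeroth- and first-order coefficients controlled by $\|\partial_xV\|_{L^\infty}$, $\|\partial_{xx}V\|_{L^\infty}$ and $L^H$ only (and \emph{not} by $\|\partial_{x\mu}V\|_{L^\infty}$), terminal datum of size $O(L^G_\mu W_1(\mu^1_T,\mu^2_T))$ and source of size $O(L^H W_1(\rho^1_s,\rho^2_s))$; a maximum principle together with $W_1(\rho^1_s,\rho^2_s)\le C\,W_1(\mu^1_s,\mu^2_s)+\|\delta_xV(s)\|_{L^\infty}$ and a backward Grönwall argument then give the $W_1$-analogue of (\ref{disp difference Vx}), namely
\[
\sup_{t\in[t_0,T]}\E\big[\|\delta_xV(t)\|_{L^\infty}\big]\le C\sup_{t\in[t_0,T]}\E\big[W_1(\mu^1_t,\mu^2_t)\big].
\]

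The second step is the energy estimate. By Theorem \ref{propagation LL} the functional $\Psi(t):=\E[u^1(t,X^1_t)+u^2(t,X^2_t)-u^1(t,X^2_t)-u^2(t,X^1_t)]$ is nonnegative and equals $\mathcal{G}_T+\int_t^T\mathcal{H}_s\,ds$, with $\mathcal{G}_T\ge 0$ the terminal Lasry--Lions functional of $G$ and $\mathcal{H}_s\ge 0$ the Lasry--Lions integrand of $\widehat H$ (Assumption \ref{assumption H LL}) along the solutions; since $\Psi(t_0)=\E[g(\xi^1)-g(\xi^2)]$ with $g:=V(t_0,\cdot,\mu^1)-V(t_0,\cdot,\mu^2)$ and the coupling is $W_1$-optimal, this yields $\int_{t_0}^T\mathcal{H}_s\,ds\le\|\delta_xV(t_0)\|_{L^\infty}\,D$. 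Expanding $\mathcal{H}_s$ via the interpolation used in the proof of Theorem \ref{equiH_LL} and invoking the net concavity $\partial_{pp}\widehat H\le-c_0I_d$, $|\partial_{p\rho_2}\widehat H|\le c_1$ of Assumption \ref{H regularity}(iii) (cf. Remark \ref{remark H regularity}(ii)) to absorb the cross terms produces the coercive lower bound $\mathcal{H}_s\gtrsim C_1\,\E\big[\|\delta_xV(s)\|^2_{L^2(\mu^2_s)}\big]-C\,\E\big[|\delta X_s|^2\big]$, the state marginal of $\rho^1_s-\rho^2_s$ being harmless since $\E W_1(\mu^1_s,\mu^2_s)^2\le\E|\delta X_s|^2$. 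Feeding this into the forward equation for $\delta X$ and applying Grönwall gives, exactly as in (\ref{disp difference mu}), $\sup_t\E|\delta X_t|^2\le C\big(\E|\xi^1-\xi^2|^2+\|\delta_xV(t_0)\|_{L^\infty}D\big)$, hence $\sup_t\E W_1(\mu^1_t,\mu^2_t)\le C\big(\E|\xi^1-\xi^2|^2+\|\delta_xV(t_0)\|_{L^\infty}D\big)^{1/2}$. Combining with the transfer estimate of the first step, choosing the coupling appropriately, and absorbing the resulting $\|\delta_xV(t_0)\|_{L^\infty}$-term by a weighted Young inequality, one concludes $\|\delta_xV(t_0)\|_{L^\infty}\le C\,W_1(\mu^1,\mu^2)$, i.e. the asserted uniform $W_1$-Lipschitz continuity of $\partial_xV$ in $\mu$, with $C$ depending only on $d,T$, $\|\partial_xV\|_{L^\infty}$, $\|\partial_{xx}V\|_{L^\infty}$, $L^G_x,L^G_\mu,L^H,C_1$.

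I expect the main obstacle to be the coercive lower bound on the Lasry--Lions integrand $\mathcal{H}_s$: unlike in the displacement case, Theorem \ref{propagation LL} only supplies the \emph{scalar} nonnegative functional $\Psi$, so one must genuinely open up the nonseparable Lasry--Lions quadratic form, isolate the contribution of the control marginal of $\rho^1_s-\rho^2_s$ (the piece governed by $c_1$) and merge it with the $p$-concavity $c_0$ so that exactly the gap $C_1$ survives in front of $\|\delta_xV(s)\|^2_{L^2(\mu^2_s)}$, while routing every remaining term into $\E[|\delta X_s|^2]$ via Young's inequality. A secondary technical point is the Wasserstein/moment bookkeeping needed to keep the whole argument at the $W_1$ level (so that $D=W_1$, and not $W_2$, appears at the end) together with the $\mathcal{F}^0$-conditional expectations created by the common noise, both handled as in \cite{mou2022propagation,bansil2025degenerate}.
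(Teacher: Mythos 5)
Your overall skeleton shares the paper's two pillars (an energy estimate giving integrated control of $\partial_xV(s,\cdot,\mu^1_s)-\partial_xV(s,\cdot,\mu^2_s)$ along coupled flows, plus the $L^\infty$-transfer estimate of the type of \cite[Proposition 3.4]{bansil2025degenerate}, combined through Young's inequality and an optimal $W_1$-coupling), and your Step 1 matches the paper's estimate (\ref{LL difference Vx}). The gap is in your Step 2, and it is twofold. First, your energy identity runs forward from $t_0$ to $T$, so it needs $\mathcal{G}_T\ge 0$ (Lasry--Lions monotonicity of $G$) and then a coercive lower bound on the Lasry--Lions integrand $\mathcal{H}_s$ of $\widehat H$ -- i.e.\ you are implicitly re-importing monotonicity of the \emph{data}, which is not among the hypotheses of Theorem \ref{W1 LL} (only $V(t,\cdot,\cdot)$ is assumed Lasry--Lions monotone, together with Assumptions \ref{fixed point}, \ref{G regularity}, \ref{H regularity}(i)(iii)). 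The paper instead integrates the It\^o--Wentzell dynamics of the cross functional from $t_0$ to $t$, writes it as $I_1+I_2+I_3$, and uses the assumed monotonicity of $V(t,\cdot,\cdot)$ only for the sign $I_1+I_2+I_3\ge 0$; the coercivity then comes for free from $I_2$, where the $p$-concavity $\partial_{pp}\widehat H\le -c_0I_d$ is applied at \emph{frozen} $\rho^i_s$, so the Lasry--Lions quadratic form of $\widehat H$ never has to be opened up. The lower bound on $\mathcal{H}_s$ that you defer as ``the main obstacle'' is thus not an incidental technicality: it is the step your route replaces by the paper's $I_2/I_3$ splitting, and you have not carried it out (note also that expanding $\mathcal{H}_s$ via the interpolation of Theorem \ref{equiH_LL} brings in the interpolation velocity $v_t$, whose weighted $L^2$ size is an $\dot H^{-1}$-type quantity not obviously controlled by $W_1(\mu^1_s,\mu^2_s)$ or $\E|\delta X_s|$).

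Second, your moment bookkeeping does not close at the $W_1$ level as written. Your claimed bound $\mathcal{H}_s\gtrsim C_1\E\|\delta_xV(s)\|^2_{L^2(\mu^2_s)}-C\,\E[|\delta X_s|^2]$ and the subsequent estimate $\sup_t\E|\delta X_t|^2\le C(\E|\xi^1-\xi^2|^2+\cdots)$ involve \emph{second} moments of the coupling; under the $W_1$-optimal coupling, $\E[|\xi^1-\xi^2|^2]$ is not controlled by $W_1(\mu^1,\mu^2)$ (a small mass transported a long distance makes it arbitrarily large relative to $W_1$), so the final Young-absorption step would deliver at best a $W_2$-type conclusion. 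The paper's estimates (\ref{estimate I3})--(\ref{LL difference mu}) are engineered precisely to avoid this: because the $\rho$-differences enter $\widehat H$ only through its (bounded) measure derivatives, all error terms appear as products of \emph{first} moments, e.g.\ $(\E[|X^1_s-X^2_s|])^2$, the forward Gr\"onwall argument runs on $\E[|X^1_t-X^2_t|]$, and the $W_1$-optimal coupling $\E[|\xi^1-\xi^2|]=W_1(\mu^1_{t_0},\mu^2_{t_0})$ then closes the loop. To repair your proposal you would need to redo Step 2 so that (a) only the monotonicity of $V$ itself is used for the sign, and (b) every error term is expressed through $\E[|\delta X_s|]$ rather than $\E[|\delta X_s|^2]$; this is essentially what the paper's $I_1,I_2,I_3$ decomposition accomplishes.
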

\begin{proof}
    Following the computation in Theorem \ref{propagation LL}, we have
    \begin{equation}
        \E[V(t,X_t^1,\mu_t^1)+V(t,X_t^2,\mu_t^2)-V(t,X_t^2,\mu_t^1)-V(t,X_t^1,\mu_t^2)]=I_1+I_2+I_3,
    \end{equation}
    where
    \[
    I_1:=\E[V(t_0,\xi^1,\mu_{t_0}^1)+V(t_0,\xi^2,\mu_{t_0}^2)-V(t_0,\xi^2,\mu_{t_0}^1)-V(t_0,\xi^1,\mu_{t_0}^2)],
    \]
    \[
    \begin{aligned}
        I_2:=&\int_{t_0}^t\E\Big[\big\langle\partial_x V(s,X_s^1,\mu_s^1)-\partial_x V(s,X_s^1,\mu_s^2),\partial_p\widehat{H}\big(X_s^1,\partial_x V(s,X_s^1,\mu_s^1),\rho_s^1\big)\big\rangle\\
        &\qquad-\widehat{H}\big(X_s^1,\partial_x V(s,X_s^1,\mu_s^1),\rho_s^1\big)+\widehat{H}\big(X_s^1,\partial_x V(s,X_s^1,\mu_s^2),\rho_s^1\big)\\
        &\qquad+\big\langle\partial_x V(s,X_s^2,\mu_s^2)-\partial_x V(s,X_s^2,\mu_s^1),\partial_p\widehat{H}\big(X_s^2,\partial_x V(s,X_s^2,\mu_s^2),\rho_s^2\big)\big\rangle\\
        &\qquad-\widehat{H}\big(X_s^2,\partial_x V(s,X_s^2,\mu_s^2),\rho_s^2\big)+\widehat{H}\big(X_s^2,\partial_x V(s,X_s^2,\mu_s^1),\rho_s^2\big)\Big]\,ds,
    \end{aligned}
    \]
    \[
    \begin{aligned}
        I_3:=&\int_{t_0}^t\E\Big[-\widehat{H}\big(X_s^1,\partial_x V(s,X_s^1,\mu_s^2),\rho_s^1\big)+\widehat{H}\big(X_s^1,\partial_x V(s,X_s^1,\mu_s^2),\rho_s^2\big)\\
        &\qquad-\widehat{H}\big(X_s^2,\partial_x V(s,X_s^2,\mu_s^1),\rho_s^2\big)+\widehat{H}\big(X_s^2,\partial_x V(s,X_s^2,\mu_s^1),\rho_s^1\big)\Big]\,ds.
    \end{aligned}
    \]
    We first have
    \begin{equation}
    \label{estimate I1}
    I_1=\E\Big[\int_0^1\big\langle\partial_x V(t_0,\xi_{\lambda},\mu_{t_0}^1)-\partial_x V(t_0,\xi_{\lambda},\mu_{t_0}^2),\xi^1-\xi^2\big\rangle\,d\lambda\Big],
    \end{equation}
    where $\xi_{\lambda}=\lambda\xi^1+(1-\lambda)\xi^2$. Using the concavity $\partial_{pp}\widehat{H}\leq-c_0 I_d$, we have
    \begin{equation}
    \label{estimate I2}
    \begin{aligned}
        I_2\leq&\int_{t_0}^t\Big( -\frac12 c_0\E[|\partial_x V(s,X_s^1,\mu_s^1)-\partial_x V(s,X_s^1,\mu_s^2)|^2\big]\\
        &-\frac12 c_0\E\big[|\partial_x V(s,X_s^2,\mu_s^1)-\partial_x V(s,X_s^2,\mu_s^2)|^2]\Big)\,ds.
    \end{aligned}
    \end{equation}
    Next, using the boundedness for $\partial_{xx}V$, second order derivatives of $\widehat{H}$ and $|\partial_{p\rho_2}\widehat{H}|\leq c_1$, we have the estimate
    \begin{equation}
    \label{estimate I3}
    \begin{aligned}
        I_3\leq&\int_{t_0}^t\Big\{C\big(\E[|X_s^1-X_s^2|]\big)^2+C\E[|X_s^1-X_s^2|]\E[|\partial_x V(s,X_s^1,\mu_s^1)-\partial_x V(s,X_s^2,\mu_s^2)|]\\
        &\qquad+C\E[|X_s^1-X_s^2|]\E[|\partial_x V(s,X_s^1,\mu_s^2)-\partial_x V(s,X_s^2,\mu_s^1)|]\\
        &\qquad+c_1\E[|\partial_x V(s,X_s^1,\mu_s^2)-\partial_x V(s,X_s^2,\mu_s^1)|]\\
        &\qquad\qquad\cdot\E[|\partial_x V(s,X_s^1,\mu_s^1)-\partial_x V(s,X_s^2,\mu_s^2)|]\Big\}\,ds\\
        \leq&\int_{t_0}^t\Big\{C\big(\E[|X_s^1-X_s^2|]\big)^2+(\frac{c_1}{2}+\epsilon)\Big(\E[|\partial_x V(s,X_s^1,\mu_s^1)-\partial_x V(s,X_s^2,\mu_s^1)|^2]\\
        &\qquad+\E[|\partial_x V(s,X_s^2,\mu_s^1)-\partial_x V(s,X_s^2,\mu_s^2)|^2]\Big)\Big\}\,ds
    \end{aligned}
    \end{equation}
    for some undetermined $\epsilon>0$, where we use Young's inequality in the last inequality.
    Since $V$ satisfies the Lasry-Lions monotonicity condition (\ref{LL monotone}), we know $I_1+I_2+I_3\geq 0$. Now combining with (\ref{estimate I1})-(\ref{estimate I3}), noting that $c_1<c_0$ and choosing $\epsilon$ small enough, we conclude that
    \begin{equation}
    \label{LL difference Vx2}
    \begin{aligned}
        &\int_{t_0}^t\E\Big[|\partial_x V(s,X_s^1,\mu_s^1)-\partial_x V(s,X_s^1,\mu_s^2)|^2+|\partial_x V(s,X_s^2,\mu_s^1)-\partial_x V(s,X_s^2,\mu_s^2)|^2\Big]\,ds\\
        \leq&C\int_{t_0}^t\big(\E[|X_s^1-X_s^2|]\big)^2\,ds\\
        &+C\E\Big[\int_0^1\big\langle\partial_x V(t_0,\xi_{\lambda},\mu_{t_0}^1)-\partial_x V(t_0,\xi_{\lambda},\mu_{t_0}^2),\xi^1-\xi^2\big\rangle\,d\lambda\Big].
    \end{aligned}
    \end{equation} 
    Using the SDE satisfied by $X^1,X^2$, we have the estimate
    \begin{equation}
    \begin{aligned}
        &\E[|X_t^1-X_t^2|]\\
        \leq&\E[|\xi^1-\xi^2|]+C\int_{t_0}^t\E[|\partial_x V(s,X_s^1,\mu_s^1)-\partial_x V(s,X_s^2,\mu_s^2)|]\,ds\\
        \leq&\E[|\xi^1-\xi^2|]+C\int_{t_0}^t\big(\E[|\partial_x V(s,X_s^1,\mu_s^1)-\partial_x V(s,X_s^1,\mu_s^2)|]+\E[|X_s^1-X_s^2|]\big)\,ds.\\      
    \end{aligned}
    \end{equation}
    Using (\ref{LL difference Vx2}) and Gronwall's inequality, we obtain
    \begin{equation}\label{LL difference mu}
    \begin{aligned}
        &\big(\E[W_1(\mu_t^1,\mu_t^2)]\big)^2\leq\big(\E[|X_t^1-X_t^2|]\big)^2\\
        \leq&\big(\E[|\xi^1-\xi^2|]\big)^2+C\E\Big[\int_0^1\big\langle\partial_x V(t_0,\xi_{\lambda},\mu_{t_0}^1)-\partial_x V(t_0,\xi_{\lambda},\mu_{t_0}^2),\xi^1-\xi^2\big\rangle\,d\lambda\Big].
    \end{aligned}
    \end{equation}
    Similar to \cite[Proposition 3.4]{bansil2025degenerate}, we can derive the estimate
    \begin{equation}
    \label{LL difference Vx}
        \sup_{t\in[t_0,T]}\E\|\partial_x V(t,\cdot,\mu_t^1)-\partial_x V(t,\cdot,\mu_t^2)\|_{L^{\infty}}\leq C\sup_{t\in[t_0,T]}\E W_1(\mu_t^1,\mu_t^2).  
    \end{equation}
    Plugging (\ref{LL difference Vx}) into (\ref{LL difference mu}), then applying Young's inequality, we obtain
    \[
    \sup_{t\in[t_0,T]}\E[W_1(\mu_t^1,\mu_t^2)]\leq C\E[|\xi^1-\xi^2|].
    \]
    We can choose $\xi^1,\xi^2$ such that $W_1(\mu_{t_0}^1,\mu_{t_0}^2)=\E[|\xi^1-\xi^2|]$. Then, from (\ref{LL difference Vx}) we conclude that
    \[
    \sup_{t\in[t_0,T]}\E\|\partial_x V(t,\cdot,\mu_t^1)-\partial_x V(t,\cdot,\mu_t^2)\|_{L^{\infty}}\leq CW_1(\mu_{t_0}^1,\mu_{t_0}^2),
    \]
    which implies the $W_1$-Lipschitz continuity of $\partial_x V$ at $t_0$.    
\end{proof}

\section{Global well-posedness of the master equation}
In this section we shall establish the global well-posedness of the master equation (\ref{master equation}). Note that the global well-posedness requires $W_1$-Lipschitz continuity. Hence we shall derive the $W_1$-Lipschitz continuity of $\partial_x V$ in $\mu$ from the $W_2$-Lipschitz continuity established above under the displacement $\lambda$-monotonicity condition. This can be achieved by a pointwise representation for Wasserstein derivatives developed in \cite{mou2020wellposedness}. To derive a representation formula for $\partial_{x\mu}V$, we first consider the following McKean-Vlasov FBSDE: given $t_0\in[0,T]$ and $\xi\in\mathbb{L}^2(\mathcal{F}_{t_0})$,
\begin{equation}
\label{xi}
\left\{
\begin{aligned}
    &X_t^{\xi}=\xi+\int_{t_0}^t\partial_p\widehat{H}(X_s^{\xi},\nabla Y_s^{\xi},\rho_s)\,ds+B_t^{t_0}+\beta B_t^{0,t_0},\\
    &\nabla Y_t^{\xi}=\partial_x G(X_T^{\xi},\mu_T)+\int_t^T\partial_x\widehat{H}(X_s^{\xi},\nabla Y_s^{\xi},\rho_s)\,ds\\
    &\qquad\qquad-\int_t^T \nabla Z_s^{\xi}\cdot dB_s-\int_t^T \nabla Z_s^{0,\xi}\cdot dB_s^0,\\
    &\mu_t:=\mathcal{L}_{X_t^{\xi}|\mathcal{F}_t^0},\quad\rho_t:=\rho_t^{\xi}:=\mathcal{L}_{(X_t^{\xi},\nabla Y_t^{\xi})|\mathcal{F}_t^0}.
\end{aligned}
\right.
\end{equation}
$\nabla Y^{\xi}$ is related to the master equation (\ref{master equation}) as $\nabla Y_t^{\xi}=\partial_x V(t,X_t^{\xi},\mathcal{L}_{X_t^{\xi}|\mathcal{F}_t^0})$. Given $\rho$ as above and $x\in\mathbb{R}^d$, we consider the following FBSDE:
\begin{equation}
\label{xi,x}
\left\{
\begin{aligned}
    &X_t^{\xi,x}=x+\int_{t_0}^t\partial_p\widehat{H}(X_s^{\xi,x},\nabla Y_s^{\xi,x},\rho_s)\,ds+B_t^{t_0}+\beta B_t^{0,t_0},\\
    &\nabla Y_t^{\xi,x}=\partial_x G(X_T^{\xi,x},\mu_T)+\int_t^T\partial_x\widehat{H}(X_s^{\xi,x},\nabla Y_s^{\xi,x},\rho_s)\,ds\\
    &\qquad\qquad-\int_t^T \nabla Z_s^{\xi,x}\cdot dB_s-\int_t^T \nabla Z_s^{0,\xi,x}\cdot dB_s^0.\\
\end{aligned}
\right.
\end{equation}
\par Next we consider the following FBSDEs on $[t_0,T]$:
\begin{equation}
\label{nablak xi,x}
\left\{
\begin{aligned}
    \nabla_k X_t^{\xi,x}=&e_k+\int_{t_0}^t\Big[\partial_{xp}\widehat{H}(X_s^{\xi,x},\nabla Y_s^{\xi,x},\rho_s)\nabla_k X_s^{\xi,x}\\
    &\qquad\qquad+\partial_{pp}\widehat{H}(X_s^{\xi,x},\nabla Y_s^{\xi,x},\rho_s)\nabla_k^2 Y_s^{\xi,x}\Big]\,ds,\\
    \nabla_k^2 Y_t^{\xi,x}=&\partial_{xx}G(X_T^{\xi,x},\mu_T)\nabla_k X_T^{\xi,x}-\int_t^T \nabla_k^2 Z_s^{\xi,x}\cdot dB_s-\int_t^T \nabla_k^2 Z_s^{0,\xi,x}\cdot dB_s^0\\
    &+\int_t^T\Big[\partial_{xx}\widehat{H}(X_s^{\xi,x},\nabla Y_s^{\xi,x},\rho_s)\nabla_k X_s^{\xi,x}\\
    &\qquad\qquad+\partial_{px}\widehat{H}(X_s^{\xi,x},\nabla Y_s^{\xi,x},\rho_s)\nabla_k^2 Y_s^{\xi,x}\Big]\,ds,
\end{aligned}
\right.
\end{equation}
which is a formal differentiation of (\ref{xi,x}) with respect to $x_k$, and
\small\begin{equation}
\label{nablak mathcal xi,x}
\left\{
\begin{aligned}
    \nabla_k\mathcal{X}_t^{\xi,x}=&\int_{t_0}^t\Big\{\partial_{xp}\widehat{H}(X_s^{\xi},\nabla Y_s^{\xi},\rho_s)\nabla_k\mathcal{X}_s^{\xi,x}+\partial_{pp}\widehat{H}(X_s^{\xi},\nabla Y_s^{\xi},\rho_s)\nabla_k^2 \mathcal{Y}_s^{\xi,x}\\
    &\qquad+\tilde{\E}_{\mathcal{F}_s}\big[\partial_{\rho_1 p}\widehat{H}(X_s^{\xi},\nabla Y_s^{\xi},\rho_s,\tilde{X}_s^{\xi,x},\nabla\tilde{Y}_s^{\xi,x})\nabla_k\tilde{X}_s^{\xi,x}\\
    &\qquad+\partial_{\rho_2 p}\widehat{H}(X_s^{\xi},\nabla Y_s^{\xi},\rho_s,\tilde{X}_s^{\xi,x},\nabla\tilde{Y}_s^{\xi,x})\nabla_k^2\tilde{Y}_s^{\xi,x}\\
    &\qquad+\partial_{\rho_1 p}\widehat{H}(X_s^{\xi},\nabla Y_s^{\xi},\rho_s,\tilde{X}_s^{\xi,x},\nabla\tilde{Y}_s^{\xi,x})\nabla_k\tilde{\mathcal{X}}_s^{\xi,x}\\
    &\qquad+\partial_{\rho_2 p}\widehat{H}(X_s^{\xi},\nabla Y_s^{\xi},\rho_s,\tilde{X}_s^{\xi,x},\nabla\tilde{Y}_s^{\xi,x})\nabla_k^2\tilde{\mathcal{Y}}_s^{\xi,x}\big]\Big\}\,ds,\\
    \nabla_k^2\mathcal{Y}_t^{\xi,x}=&\partial_{xx}G(X_T^{\xi},\mu_T)\nabla_k\mathcal{X}_T^{\xi,x}-\int_t^T\nabla_k^2\mathcal{Z}_s^{\xi,x}\cdot dB_s-\int_t^T\nabla_1^2\mathcal{Z}_s^{0,\xi,x}\cdot dB_s^0\\
    &+\tilde{\E}_{\mathcal{F}_T}\big[\partial_{\mu x}G(X_T^{\xi},\mu_T,\tilde{X}_T^{\xi,x})\nabla_k\tilde{X}_T^{\xi,x}+\partial_{\mu x}G(X_T^{\xi},\mu_T,\tilde{X}_T^{\xi})\nabla_k\tilde{\mathcal{X}}_T^{\xi,x}\big]\\
    &+\int_t^T\Big\{\partial_{xx}\widehat{H}(X_s^{\xi},\nabla Y_s^{\xi},\rho_s)\nabla_k\mathcal{X}_s^{\xi,x}+\partial_{px}\widehat{H}(X_s^{\xi},\nabla Y_s^{\xi},\rho_s)\nabla_k^2\mathcal{Y}_s^{\xi,x}\\
    &\qquad+\tilde{\E}_{\mathcal{F}_s}\big[\partial_{\rho_1 x}\widehat{H}(X_s^{\xi},\nabla Y_s^{\xi},\rho_s,\tilde{X}_s^{\xi,x},\nabla\tilde{Y}_s^{\xi,x})\nabla_k\tilde{X}_s^{\xi,x}\\
    &\qquad+\partial_{\rho_2 x}\widehat{H}(X_s^{\xi},\nabla Y_s^{\xi},\rho_s,\tilde{X}_s^{\xi,x},\nabla\tilde{Y}_s^{\xi,x})\nabla_k^2\tilde{Y}_s^{\xi,x}\\
    &\qquad+\partial_{\rho_1 x}\widehat{H}(X_s^{\xi},\nabla Y_s^{\xi},\rho_s,\tilde{X}_s^{\xi,x},\nabla\tilde{Y}_s^{\xi,x})\nabla_k\tilde{\mathcal{X}}_s^{\xi,x}\\
    &\qquad+\partial_{\rho_2 x}\widehat{H}(X_s^{\xi},\nabla Y_s^{\xi},\rho_s,\tilde{X}_s^{\xi,x},\nabla\tilde{Y}_s^{\xi,x})\nabla_k^2\tilde{\mathcal{Y}}_s^{\xi,x}\big]\Big\}\,ds.
\end{aligned}
\right.
\end{equation}\normalsize
Then, the representation formula for $\partial_{x\mu}V$ is given by the following FBSDE:
\small\begin{equation}
\label{representation FBSDE}
\left\{
\begin{aligned}
    \nabla_{\mu_k}X_t^{\xi,x,\tilde{x}}=&\int_{t_0}^t\Big\{\partial_{xp}\widehat{H}(X_s^{\xi,x},\nabla Y_s^{\xi,x},\rho_s)\nabla_{\mu_k}X_s^{\xi,x,\tilde{x}}\\
    &\qquad+\partial_{pp}\widehat{H}(X_s^{\xi,x},\nabla Y_s^{\xi,x},\rho_s)\nabla_{\mu_k}^2 Y_s^{\xi,x,\tilde{x}}\\
    &\qquad+\tilde{\E}_{\mathcal{F}_s}\big[\partial_{\rho_1 p}\widehat{H}(X_s^{\xi,x},\nabla Y_s^{\xi,x},\rho_s,\tilde{X}_s^{\xi,\tilde{x}},\nabla\tilde{Y}_s^{\xi,\tilde{x}})\nabla_k\tilde{X}_s^{\xi,\tilde{x}}\\
    &\qquad+\partial_{\rho_2 p}\widehat{H}(X_s^{\xi,x},\nabla Y_s^{\xi,x},\rho_s,\tilde{X}_s^{\xi,\tilde{x}},\nabla\tilde{Y}_s^{\xi,\tilde{x}})\nabla_k^2\tilde{Y}_s^{\xi,\tilde{x}}\\
    &\qquad+\partial_{\rho_1 p}\widehat{H}(X_s^{\xi,x},\nabla Y_s^{\xi,x},\rho_s,\tilde{X}_s^{\xi,\tilde{x}},\nabla\tilde{Y}_s^{\xi,\tilde{x}})\nabla_k\tilde{\mathcal{X}}_s^{\xi,\tilde{x}}\\
    &\qquad+\partial_{\rho_2 p}\widehat{H}(X_s^{\xi,x},\nabla Y_s^{\xi,x},\rho_s,\tilde{X}_s^{\xi,\tilde{x}},\nabla\tilde{Y}_s^{\xi,\tilde{x}})\nabla_k^2\tilde{\mathcal{Y}}_s^{\xi,\tilde{x}}\big]\Big\}\,ds\\
    \nabla_{\mu_k}^2 Y_t^{\xi,x,\tilde{x}}=&\partial_{xx}G(X_T^{\xi,x},\mu_T)\nabla_{\mu_k}X_T^{\xi,x,\tilde{x}}+\tilde{\E}_{\mathcal{F}_T}\big[\partial_{\mu x}G(X_T^{\xi,x},\mu_T,\tilde{X}_T^{\xi,\tilde{x}})\nabla_k\tilde{X}_T^{\xi,\tilde{x}}\\
    &+\partial_{\mu x}G(X_T^{\xi,x},\mu_T,\tilde{X}_T^{\xi})\nabla_k\tilde{\mathcal{X}}_T^{\xi,\tilde{x}}\big]\\
    &-\int_t^T\nabla_{\mu_k}^2 Z_s^{\xi,x,\tilde{x}}\cdot dB_s-\int_t^T\nabla_{\mu_k}^2 Z_s^{0,\xi,x,\tilde{x}}\cdot dB_s^0\\
    &+\int_t^T\Big\{\partial_{xx}\widehat{H}(X_s^{\xi,x},\nabla Y_s^{\xi,x},\rho_s)\nabla_{\mu_k}X_s^{\xi,x,\tilde{x}}\\
    &\qquad+\partial_{px}\widehat{H}(X_s^{\xi,x},\nabla Y_s^{\xi,x},\rho_s)\nabla_{\mu_k}^2 Y_s^{\xi,x,\tilde{x}}\\
    &\qquad+\tilde{\E}_{\mathcal{F}_s}\big[\partial_{\rho_1 x}\widehat{H}(X_s^{\xi,x},\nabla Y_s^{\xi,x},\rho_s,\tilde{X}_s^{\xi,\tilde{x}},\nabla\tilde{Y}_s^{\xi,\tilde{x}})\nabla_k\tilde{X}_s^{\xi,\tilde{x}}\\
    &\qquad+\partial_{\rho_2 x}\widehat{H}(X_s^{\xi,x},\nabla Y_s^{\xi,x},\rho_s,\tilde{X}_s^{\xi,\tilde{x}},\nabla\tilde{Y}_s^{\xi,\tilde{x}})\nabla_k^2\tilde{Y}_s^{\xi,\tilde{x}}\\
    &\qquad+\partial_{\rho_1 x}\widehat{H}(X_s^{\xi,x},\nabla Y_s^{\xi,x},\rho_s,\tilde{X}_s^{\xi,\tilde{x}},\nabla\tilde{Y}_s^{\xi,\tilde{x}})\nabla_k\tilde{\mathcal{X}}_s^{\xi,\tilde{x}}\\
    &\qquad+\partial_{\rho_2 x}\widehat{H}(X_s^{\xi,x},\nabla Y_s^{\xi,x},\rho_s,\tilde{X}_s^{\xi,\tilde{x}},\nabla\tilde{Y}_s^{\xi,\tilde{x}})\nabla_k^2\tilde{\mathcal{Y}}_s^{\xi,\tilde{x}}\big]\Big\}\,ds.
\end{aligned}\right.
\end{equation}\normalsize
\begin{theorem}
\label{W1 Lipschitz}
    Let Assumptions \ref{fixed point}, \ref{G regularity} and \ref{H regularity}(i)(ii) hold. Then there exists $\delta>0$, depending only on $d$, $L_x^G$ in Assumption \ref{G regularity}, $\tilde{L}_{\mu}^G$ in Remark \ref{G regularity remark}, $L^H(L_x^V)$ in Assumption \ref{H regularity}(i) and Proposition \ref{Vxx bounded}, such that if $T-t_0\leq\delta$, the following statements hold.\\
    (i) All the FBSDEs (\ref{xi})-(\ref{representation FBSDE}) are well-posed on $[t_0,T]$, for any $x\in\R^d,\,\mu\in\mathcal{P}_2(\R^d)$ and $\xi\in\mathbb{L}^2(\mathcal{F}_{t_0};\mu)$.\\
    (ii) Define $\vec{U}(t_0,x,\mu):=\nabla Y_{t_0}^{\xi,x}$. Then we have the pointwise representation formula:
    \begin{equation}
    \label{representation formula}
        \partial_{\mu_k}\vec{U}(t_0,x,\mu,\tilde{x})=\nabla_{\mu_k}^2 Y_{t_0}^{\xi,x,\tilde{x}}.
    \end{equation}
    Moreover, $\partial_{\mu_k}\vec{U}(t_0,x,\mu,\tilde{x})$ is uniformly bounded, where the bound depends only on $d,L^H,L_{x}^G$ and $L_{\mu}^G$.\\
    (iii) The following decoupled FBSDE
    \begin{equation}
    \label{fbsde define V}
    \left\{
    \begin{aligned}
        &X_t^x=x+B_t^{t_0}+\beta B_t^{0,t_0},\\
        &Y_t^{x,\xi}=G(X_T^x,\mu_T)+\int_t^T\widehat{H}(X_s^x,\vec{U}(s,X_s^x,\mu_s),\rho_s)\,ds\\
        &\qquad\qquad-\int_t^T Z_s^{x,\xi}\cdot dB_s-\int_t^T Z_s^{0,x,\xi}\cdot dB_s^0
    \end{aligned}
    \right.
    \end{equation}
    is well-posed on $[t_0,T]$ for any $x\in\mathbb{R}^d$. Define $V(t_0,x,\mu):=Y_{t_0}^{x,\xi}$. Then $V$ is the unique classical solution to the master equation (\ref{master equation}) on $[t_0,T]$ with bounded $\partial_x V,\partial_{xx}V,\partial_{x\mu}V$. Moreover, we have $\partial_x V=\vec{U}$.
\end{theorem}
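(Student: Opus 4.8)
Theorem~\ref{W1 Lipschitz} is a \emph{local} well-posedness statement, so the plan is to run a Banach fixed-point argument on the short interval $[t_0,T]$ and then differentiate the resulting flow maps, choosing $\delta$ at the very end so that every contraction constant that appears is at most $\tfrac12$. First I would settle part (i) for the ``state'' systems \eqref{xi} and \eqref{xi,x}: given any $\mathbb F^0$-progressively measurable $\rho$ with finite second moments, Proposition~\ref{Vxx bounded}---whose hypotheses are precisely Assumptions~\ref{G regularity}, \ref{H regularity}(i)(ii), with the linear growth $|\partial_x\widehat H(x,p,\rho)|\le C_0(1+|p|)$ driving the BSDE estimate---forces any solution to satisfy $\|\nabla Y^{\xi,x}\|_{L^\infty},\|\nabla Y^{\xi}\|_{L^\infty}\le L_x^V$. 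Hence the momentum stays in $B_{L_x^V}$, on which all first and second derivatives of $\widehat H$ used below are bounded by $L^H(L_x^V)$; there \eqref{xi,x} has Lipschitz coefficients, so for $T-t_0\le\delta$ it is uniquely solvable in the usual space of square-integrable adapted processes, and the McKean--Vlasov map sending $\rho$ to the flow $t\mapsto\mathcal L_{(X_t^\xi,\nabla Y_t^\xi)|\mathcal F_t^0}$ is also a contraction for $\delta$ small, which yields \eqref{xi} and then \eqref{xi,x} at the self-consistent $\rho$, as well as its $\tilde x$-indexed copies.

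The linearized systems \eqref{nablak xi,x}, \eqref{nablak mathcal xi,x} and \eqref{representation FBSDE} are \emph{linear} FBSDEs (the last two of conditional McKean--Vlasov type) whose coefficients are second derivatives of $\widehat H$ along the Step-1 solution, hence bounded by $L^H(L_x^V)$, and whose terminal data are built from $\partial_{xx}G$ and $\partial_{\mu x}G$, bounded by $L_x^G$ and $L_\mu^G$. The same short-time contraction gives existence and uniqueness, finishing (i), and the standard linear-FBSDE estimate bounds $\sup_t|\nabla_kX_t^{\xi,x}|$, $\sup_t|\nabla_k^2Y_t^{\xi,x}|$, $\sup_t|\nabla_k\mathcal X_t^{\xi,x}|$, $\sup_t|\nabla_k^2\mathcal Y_t^{\xi,x}|$, $\sup_t|\nabla_{\mu_k}X_t^{\xi,x,\tilde x}|$ and $\sup_t|\nabla_{\mu_k}^2Y_t^{\xi,x,\tilde x}|$ by a constant depending only on $d,L^H,L_x^G,L_\mu^G$---the $\delta$-dependence dropping out once $\delta$ is small---which is exactly the uniform bound asserted in (ii).

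The core of the argument is the pointwise representation in (ii); here $\vec U(t_0,x,\mu):=\nabla Y^{\xi,x}_{t_0}$ is well-defined because the joint law of $(X^\xi,\nabla Y^\xi)$ depends on $\xi$ only through $\mu=\mathcal L_\xi$. For $\partial_{x_k}\vec U$ I would take difference quotients $h^{-1}(\nabla Y^{\xi,x+he_k}-\nabla Y^{\xi,x})$, observe that they solve an FBSDE whose coefficients converge as $h\to0$---by continuity of $\partial^2\widehat H$ and $\partial^2 G$---to those of \eqref{nablak xi,x}, and pass to the limit via $L^2$-stability of FBSDEs, identifying the limit with $\nabla_k^2Y^{\xi,x}$ by uniqueness. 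For the Lions derivative I would perturb $\xi\rightsquigarrow\xi^\varepsilon=\xi+\varepsilon\vartheta$ with $\vartheta\in\mathbb L^2(\mathcal F_{t_0})$, and split the resulting variation of $\nabla Y^{\xi,x}$ into a ``direct'' channel (variation of a test particle started at $\tilde x$, linearized by the $(\nabla_kX,\nabla_k^2Y)$ system) and a ``mean-field'' channel (variation transmitted through $\mu_t$ and $\rho_t$, linearized by the auxiliary $(\nabla_k\mathcal X,\nabla_k^2\mathcal Y)$ system); their sum reproduces exactly the driver and terminal data of \eqref{representation FBSDE}, and showing $\varepsilon^{-1}(\nabla Y^{\xi^\varepsilon,x}_{t_0}-\nabla Y^{\xi,x}_{t_0})\to\sum_k\tilde{\E}[\nabla_{\mu_k}^2Y_{t_0}^{\xi,x,\tilde\xi}\,\tilde\vartheta_k]$ by the same stability-plus-continuity argument both establishes $L$-differentiability and identifies $\partial_{\mu_k}\vec U(t_0,x,\mu,\tilde x)=\nabla_{\mu_k}^2Y_{t_0}^{\xi,x,\tilde x}$. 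I would model this on the pointwise-representation technique of \cite{mou2020wellposedness} and the analogous computations in \cite{mou2022displacement,bansil2025degenerate}; Step 2 then supplies the uniform bound on $\partial_{\mu_k}\vec U$.

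For (iii), with $\vec U$ now bounded, Lipschitz in $x$ and $W_1$-Lipschitz in $\mu$, the decoupled FBSDE \eqref{fbsde define V} has Lipschitz data and is well-posed on $[t_0,T]$, so $V(t_0,x,\mu):=Y_{t_0}^{x,\xi}$ is defined; differentiating the flow $x\mapsto Y^{x,\xi}$ once and twice and $\mu\mapsto Y^{x,\xi}$ once through linearized FBSDEs controlled as above yields continuous $\partial_xV,\partial_{xx}V,\partial_\mu V,\partial_{x\mu}V,\partial_{\tilde x\mu}V,\partial_{\mu\mu}V$ with the stated bounds, and comparing the BSDE for $\nabla Y^{\xi,x}$ with the $x$-gradient of \eqref{fbsde define V} gives $\partial_xV=\vec U$. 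Finally, It\^o's formula for $t\mapsto V(t,X_t^x,\mu_t)$ along the flow \eqref{xi}, matched against \eqref{fbsde define V}, recovers the master equation \eqref{master equation} pointwise with terminal datum $G$, so $V$ is a classical solution; and any competing classical solution with the same bounded derivatives induces, via its characteristics \eqref{SDE}--\eqref{BSDE}, a solution of the McKean--Vlasov FBSDE \eqref{xi} which by (i) coincides with ours, giving uniqueness. The hard part will be the Lions-differentiability in the representation step: justifying convergence of the difference quotients for the fully \emph{coupled} McKean--Vlasov FBSDE and bookkeeping the direct versus mean-field channels so that the limit is exactly the solution of \eqref{representation FBSDE}, while keeping the conditional-expectation terms under uniform control; making the bound in (ii) genuinely $\delta$-independent also requires tracking, not merely bounding, the terminal-data contributions in the linear estimates.
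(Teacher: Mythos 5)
Your treatment of (i) and (ii) — a priori bound $\|\nabla Y\|_{L^\infty}\le L_x^V$ from Proposition \ref{Vxx bounded}, short-time contraction with constants controlled by $L^H(L_x^V),L_x^G,L_\mu^G$, then difference quotients plus $L^2$-stability to identify the Lions derivative with the solution of (\ref{representation FBSDE}) — is essentially the route the paper takes (it omits these details, citing \cite[Proposition 6.2]{mou2022displacement}), and is fine as a sketch. The genuine gap is in (iii), precisely at the step you dispose of in one clause: ``comparing the BSDE for $\nabla Y^{\xi,x}$ with the $x$-gradient of (\ref{fbsde define V}) gives $\partial_x V=\vec U$.'' These two objects are not directly comparable: $\nabla Y^{\xi,x}$ is the backward component of the \emph{coupled} FBSDE (\ref{xi,x}), whose forward process carries the drift $\partial_p\widehat H(X,\nabla Y,\rho)$, while the $x$-gradient of (\ref{fbsde define V}) is a BSDE driven by the driftless process $X^x=x+B^{t_0}+\beta B^{0,t_0}$, with driver $\partial_x\widehat H+\partial_p\widehat H\cdot\partial_x\vec U(s,X_s^x,\mu_s)$. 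To match them you would have to apply It\^o--Wentzell to $t\mapsto\vec U(t,X_t^x,\mu_t)$ (or, equivalently, derive and differentiate the equation satisfied by $\vec U$ on Wasserstein space), which requires regularity of $\vec U$ in $(t,x,\mu)$ beyond what the $\mathcal C^2$ data $G,\widehat H$ deliver. The same obstruction hits your final step, where you invoke It\^o's formula for $t\mapsto V(t,X_t^x,\mu_t)$ to recover the master equation: this needs $\partial_{\mu\mu}V,\partial_{\tilde x\mu}V,\partial_t V$ with enough continuity, and ``differentiating the flow $\mu\mapsto Y^{x,\xi}$ once'' does not produce the second-order measure derivatives. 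This is exactly the difficulty the paper flags (``we cannot directly differentiate the master equation'').

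The paper's resolution is a mollification argument that your proposal does not contain: approximate $(G,\widehat H)$ by smooth $(G_n,\widehat H_n)$ as in \cite{cosso2023smooth}, define $\vec U_n,V_n$ from the corresponding FBSDEs, derive the PDE (\ref{pde V_n}) for the smooth $V_n$ by It\^o, differentiate it in $x$ to get the equation (\ref{pde U_n'}) for $\vec U_n':=\partial_x V_n$, verify by It\^o that $\vec U_n$ satisfies the same equation, and conclude $\vec U_n=\partial_x V_n$ from a uniqueness argument for the (trivial) FBSDE solved by the difference $\Delta\vec U_n$; then $V_n$ solves the mollified master equation, and one passes to the limit using the uniform convergence of $(G_n,\widehat H_n)$ and stability of (\ref{fbsde define V}). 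If you want to salvage your outline, you must insert this regularization layer (or an equivalent device) before the identification $\partial_x V=\vec U$ and before the It\^o verification of the master equation; as written, those two steps assume differentiability that neither the data nor the constructed $\vec U,V$ are known to possess. Your closing remark locates the ``hard part'' in the Lions-differentiability of (ii); in fact that part is comparatively standard, and the real delicacy sits in (iii).
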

\begin{proof}
    The proof of (i)-(ii) is lengthy but similar to that of \cite[Proposition 6.2]{mou2022displacement}, so we omit here.\\
    (iii) From the FBSDE (\ref{xi,x}) for $\nabla Y^{\xi,x}$, we can show that $\vec{U},\partial_x\vec{U}$ defined in (ii) are uniformly bounded. Then the FBSDE (\ref{fbsde define V}) is well-posed. Next we need to verify that $V$ is the unique solution to the master equation (\ref{master equation}) and $\partial_x V=\vec{U}$. The proof is similar to that of \cite[Proposition 5.2]{mou2022anti}. {However, since we work with less regular data $\widehat{H}$ and $G$, we cannot directly differentiate the master equation, so a mollification argument is necessary.}
    \par To this end, we employ the smooth approximation constructed in \cite{cosso2023smooth}. Since $G\in\mathcal{C}^2(\R^d\times\mathcal{P}_2(\R^d))$ and $\widehat{H}\in\mathcal{C}^2(\R^{2d}\times\mathcal{P}_2(\R^{2d}))$, we can construct smooth sequences $\{G_n\}$ and $\{\widehat{H}_n\}$ such that $G_n,\widehat{H}_n$, as well as their first and second order derivatives, converge uniformly those of $\widehat{H},G$. By replacing the data $(G,\widehat{H})$ in FBSDE (\ref{xi,x}) with $(G_n,\widehat{H}_n)$, we can define $\vec{U}_n$ as in (ii). Similarly, by replacing the data $(G,\widehat{H},\vec{U})$ in FBSDE (\ref{fbsde define V}) with $(G_n,\widehat{H}_n,\vec{U}_n)$, we can define $V_n$ as in (iii). Since $U_n,V_n$ are smooth, we can apply It\^{o}'s formula to $V_n$ to deduce that $V_n$ satisfies the following equation:
    \begin{equation}\label{pde V_n}
    \begin{gathered}
        \partial_t V_n+\frac{\widehat{\beta}^2}{2}\tr(\partial_{xx}V_n)+\widehat{H}_n\big(x,\vec{U}_n(t,x,\mu),\mathcal{L}_{(\xi,\vec{U}_n(t,\xi,\mu))}\big)\\
        +\tr\Big(\bar{\tilde{\E}}\Big[\frac{\widehat{\beta}^2}{2}\partial_{\tilde{x}\mu}V_n(t,x,\mu,\tilde{\xi})+\beta^2\partial_{x\mu}V_n(t,x,\mu,\tilde{\xi})+\frac{\beta^2}{2}\partial_{\mu\mu}V_n(t,x,\mu,\bar{\xi},\tilde{\xi})\\+\partial_{\mu}V_n(t,x,\mu,\tilde{\xi})\cdot\partial_p\widehat{H}_n\big(\tilde{\xi},\vec{U}_n(t,\tilde{\xi},\mu),\mathcal{L}_{(\xi,\vec{U}_n(t,\xi,\mu))}\big)\Big]\Big)=0,\\
        V_n(T,x,\mu)=G_n(x,\mu).
    \end{gathered}
    \end{equation}
    Differentiating the above equation with respect to $x$, we obtain the equation satisfied by $\vec{U}_n':=\partial_x V_n$:
    \begin{equation}
    \label{pde U_n'}
    \begin{gathered}
        \partial_t \vec{U}_n'+\frac{\widehat{\beta}^2}{2}\tr(\partial_{xx}\vec{U}_n')+\partial_x\widehat{H}_n\big(x,\vec{U}_n(t,x,\mu),\mathcal{L}_{(\xi,\vec{U}_n(t,\xi,\mu))}\big)+\partial_p\widehat{H}_n\cdot\partial_x\vec{U}_n(t,x,\mu)\\
        +\tr\Big(\bar{\tilde{\E}}\Big[\frac{\widehat{\beta}^2}{2}\partial_{\tilde{x}\mu}\vec{U}_n'(t,x,\mu,\tilde{\xi})+\beta^2\partial_{x\mu}\vec{U}_n'(t,x,\mu,\tilde{\xi})+\frac{\beta^2}{2}\partial_{\mu\mu}\vec{U}_n'(t,x,\mu,\bar{\xi},\tilde{\xi})\\
        +\partial_{\mu}\vec{U}_n'(t,x,\mu,\tilde{\xi})\cdot\partial_p\widehat{H}_n\big(\tilde{\xi},\vec{U}_n(t,\tilde{\xi},\mu),\mathcal{L}_{(\xi,\vec{U}_n(t,\xi,\mu))}\big)\Big]\Big)=0,\\
        \vec{U}_n'(T,x,\mu)=\partial_x G_n(x,\mu).
    \end{gathered}
    \end{equation}
    On the other hand, applying It\^{o}'s formula to $\vec{U}_n$, we can verify that $\vec{U}_n$ also satisfies the equation (\ref{pde U_n'}) above. Denote $\Delta \vec{U}_n:=\vec{U}_n-\vec{U}_n'$. Then $\Delta \vec{U}_n$ satisfies the equation
    \begin{equation}
    \begin{gathered}
        \partial_t \Delta \vec{U}_n+\frac{\widehat{\beta}^2}{2}\tr(\partial_{xx}\Delta \vec{U}_n)+\\
        \tr\Big(\bar{\tilde{\E}}\Big[\frac{\widehat{\beta}^2}{2}\partial_{\tilde{x}\mu}\Delta \vec{U}_n(t,x,\mu,\tilde{\xi})+\beta^2\partial_{x\mu}\Delta \vec{U}_n(t,x,\mu,\tilde{\xi})        +\frac{\beta^2}{2}\partial_{\mu\mu}\Delta \vec{U}_n(t,x,\mu,\bar{\xi},\tilde{\xi})\\
        +\partial_{\mu}\Delta \vec{U}_n(t,x,\mu,\tilde{\xi})\cdot\partial_p\widehat{H}_n\big(\tilde{\xi},\vec{U}_n(t,\tilde{\xi},\mu),\mathcal{L}_{(\xi,\vec{U}_n(t,\xi,\mu))}\big)\Big]\Big)=0
    \end{gathered}
    \end{equation}
    with terminal condition $\Delta\vec{U}_n(T,x,\mu)=0$. By It\^{o}'s formula, the above equation is connected to the following system for $t\in[t_0,T]$,
    \begin{equation}
    \left\{\begin{aligned}
        &X_t^{\xi}=\xi+\int_{t_0}^t\partial_p\widehat{H}_n\big(X_s^{\xi},\vec{U}_n(s,X_s^{\xi},\mathcal{L}_{X_s^{\xi}|\mathcal{F}_s^0}),\mathcal{L}_{\big(X_s^{\xi},\vec{U}_n(s,X_s^{\xi},\mathcal{L}_{X_s^{\xi}|\mathcal{F}_s^0})\big)|\mathcal{F}_s^0}\big)\,ds\\
        &\qquad\quad+B_t^{t_0}+\beta\,B_t^{0,t_0},\\
        &X_t^x=x+B_t^{t_0}+\beta\,B_t^{0,t_0},\\
        &\Delta Y_t^{x,\xi}=0
    \end{aligned}\right. 
    \end{equation}
    with the relation $\Delta Y_t^{x,\xi}=\Delta\vec{U}_n(t,X_t^x,\mathcal{L}_{X_t^{\xi}|\mathcal{F}_t^0})$. Therefore, we can conclude that $\vec{U}_n=\vec{U}_n'=\partial_x V_n$. Plugging this into (\ref{pde V_n}), we see that $V_n$ satisfies the master equation with mollified data $G_n,\widehat{H}_n$. Finally, using the convergence properties of the mollifiers $G_n,\widehat{H}_n$ and the stability of the FBSDE (\ref{fbsde define V}), we conclude that $V$ is a classical solution to the master equation (\ref{master equation}). The uniqueness follows from the well-posedness of FBSDEs (\ref{xi,x})(\ref{fbsde define V}).
\end{proof}
\par With the previous results, we can follow the approach of \cite{chassagneux2014probabilistic,mou2020wellposedness,mou2022anti} to establish the global well-posedness of the master equation (\ref{master equation}). Once the local well-posedness is obtained, with an a priori uniform estimate for $\partial_{x\mu}V$, the existence and uniqueness of the classical solution follows from an induction argument.
\begin{theorem}
\label{global well-posedness}
     Suppose Assumptions \ref{fixed point}, \ref{G regularity}, \ref{H regularity}, and either of the following monotonicity conditions hold:\\
     (i) $\widehat{H}$ satisfies Assumption \ref{assumption H LL} and $G$ satisfies Lasry-Lions monotonicity condition (\ref{LL monotone});\\
     (ii) $\widehat{H}$ satisfies Assumption \ref{assumption H disp} and $G$ satisfies displacement $\lambda$-monotonicity condition (\ref{disp monotone}).\\
     Then the master equation (\ref{master equation}) admits a unique classical solution $V$ with bounded $\partial_x V,\partial_{xx}V$ and $\partial_{x\mu}V$.
\end{theorem}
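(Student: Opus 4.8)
The plan is to run the backward-in-time continuation scheme of \cite{chassagneux2014probabilistic,mou2020wellposedness,mou2022anti}: glue together the short-time classical solutions supplied by Theorem \ref{W1 Lipschitz}, using the a priori estimates of Sections 4 and 5 to keep the structural constants from deteriorating as one marches backwards. Fix one of the two regimes throughout --- case (i), the Lasry--Lions setting (Assumption \ref{assumption H LL} for $\widehat{H}$ and (\ref{LL monotone}) for $G$), or case (ii), the displacement $\lambda$ setting (Assumption \ref{assumption H disp} for $\widehat{H}$ and (\ref{disp monotone}) for $G$). The heart of the argument is an endpoint-free a priori bound: if $V$ is \emph{any} classical solution of (\ref{master equation}) on a subinterval $[t_0,T]\subseteq[0,T]$ with bounded $\partial_x V,\partial_{xx}V,\partial_{x\mu}V$, then those bounds are in fact controlled by structural constants not depending on $t_0$. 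Indeed, since $V$ solves the decoupled FBSDE (\ref{decoupled FBSDE}) driven by its own equilibrium measure flow, Proposition \ref{Vxx bounded} gives $\|\partial_x V\|_{L^{\infty}},\|\partial_{xx}V\|_{L^{\infty}}\le L_x^V$ on $[t_0,T]$, with $L_x^V$ depending only on $d,T,L_x^G,L^H,C_0$. Propagation of monotonicity --- Theorem \ref{propagation LL} in case (i), Theorem \ref{propagation disp} in case (ii) --- shows that $V(t,\cdot,\cdot)$ inherits the monotonicity of $G$ for every $t\in[t_0,T]$; feeding this into the Lipschitz estimates, in case (i) Theorem \ref{W1 LL} yields a uniform $W_1$-Lipschitz bound of $\partial_x V$ in $\mu$, equivalent by Remark \ref{G regularity remark} to $\|\partial_{x\mu}V\|_{L^{\infty}}\le L_\mu^V$, while in case (ii) Theorem \ref{W2 displacement} yields a uniform $W_2$-Lipschitz bound of $\partial_x V$, which together with the pointwise representation (\ref{representation formula}) and $L^{\infty}$-bounds on the linearized system (\ref{nablak xi,x})--(\ref{representation FBSDE}) again gives $\|\partial_{x\mu}V\|_{L^{\infty}}\le L_\mu^V$. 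Here $L_x^V,L_\mu^V$ depend only on $d,T,\lambda$ and the data of Assumptions \ref{fixed point}, \ref{G regularity}, \ref{H regularity}; enlarging if necessary, we assume $L_x^G\le L_x^V$ and $L_\mu^G\le L_\mu^V$.

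Next I would fix a uniform local existence time. The proof of Theorem \ref{W1 Lipschitz} uses the terminal datum only through the bounds in Assumption \ref{G regularity} (via Proposition \ref{Vxx bounded}), so it applies verbatim with $G$ replaced by any $\Phi\in\mathcal{C}^2(\R^d\times\mathcal{P}_2(\R^d))$ satisfying $|\partial_x\Phi|,|\partial_{xx}\Phi|\le L_x^V$ and $|\partial_{x\mu}\Phi|\le L_\mu^V$, and with $T$ replaced by any $b\le T$. This provides a single $\delta=\delta(d,L_x^V,L_\mu^V,L^H(L_x^V))>0$ such that, for every $[a,b]\subseteq[0,T]$ with $b-a\le\delta$ and every such $\Phi$, the master equation with terminal datum $\Phi$ at time $b$ has a unique classical solution on $[a,b]$ with bounded $\partial_x,\partial_{xx},\partial_{x\mu}$; since $G$ itself meets these bounds, $\delta$ is valid starting from $G$ at $T$ as well.

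The induction then runs as follows. Pick a partition $0=s_0<s_1<\dots<s_N=T$ with $s_{j+1}-s_j\le\delta$. On $[s_{N-1},T]$ solve (\ref{master equation}) with terminal datum $G$; being a classical solution with terminal $G$ and bounded derivatives, it obeys, by the a priori bound above, $\|\partial_x V\|_{L^{\infty}},\|\partial_{xx}V\|_{L^{\infty}}\le L_x^V$ and $\|\partial_{x\mu}V\|_{L^{\infty}}\le L_\mu^V$ together with the monotonicity, so its value $\Phi_{N-1}:=V(s_{N-1},\cdot,\cdot)$ is an admissible terminal datum for the local result. Solve (\ref{master equation}) on $[s_{N-2},s_{N-1}]$ with terminal $\Phi_{N-1}$ and concatenate: the two pieces agree at $s_{N-1}$ together with all their $x$- and $\mu$-derivatives (which at a fixed time are determined by the common value $\Phi_{N-1}$), so the time derivative matches through the equation and the concatenation is a classical solution on $[s_{N-2},T]$ with terminal $G$ and bounded $\partial_x,\partial_{xx},\partial_{x\mu}$. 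Applying the a priori bound once more --- the horizon is still $T$, so $L_x^V,L_\mu^V$, hence $\delta$, are unchanged --- we recover the same bounds on $[s_{N-2},T]$ and may repeat, reaching $s_0=0$ after $N$ steps and producing a classical solution $V$ on $[0,T]$ with bounded $\partial_x V,\partial_{xx}V,\partial_{x\mu}V$. Uniqueness propagates backwards from the local uniqueness in Theorem \ref{W1 Lipschitz}(iii): two such global solutions coincide on $[s_{N-1},T]$, hence share the value $\Phi_{N-1}$, hence coincide on $[s_{N-2},s_{N-1}]$, and so on down to $0$.

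I expect the main obstacle to be the a priori bound itself --- above all making the estimate on $\partial_{x\mu}V$ independent of the left endpoint $t_0$, since any deterioration of the constants would force $\delta$ to shrink from step to step and the induction would stall before reaching $0$. In case (ii) this is genuinely two-layered: Theorem \ref{W2 displacement} only produces $W_2$-Lipschitz continuity of $\partial_x V$, and upgrading this to the pointwise bound $\|\partial_{x\mu}V\|_{L^{\infty}}\le L_\mu^V$ requires the representation formula (\ref{representation formula}) and a careful $L^{\infty}$-analysis of the large linearized FBSDE (\ref{representation FBSDE}). A lesser, routine point is the $C^1$-in-time regularity of the concatenation at each junction $s_j$: it holds because all spatial Lions derivatives at a fixed time depend only on the function's value there, so they agree from both sides, after which $\partial_t V$ agrees via the equation.
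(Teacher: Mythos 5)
Your proposal is correct and follows essentially the same route as the paper's proof: uniform a priori bounds on $\partial_xV,\partial_{xx}V$ and on the Lipschitz constant of $\partial_xV$ in $\mu$ from Proposition \ref{Vxx bounded}, the propagation theorems and the Lipschitz theorems, then a uniform local existence time $\delta$ from Theorem \ref{W1 Lipschitz} with $(L_x^G,\tilde{L}_{\mu}^G)$ replaced by the a priori constants $(L_x^V,\tilde{L}_{\mu}^V)$, followed by backward induction and local uniqueness. The only cosmetic difference is in the gluing: the paper solves on overlapping windows of length $\delta$ over a partition of mesh $\delta/2$, so regularity across each junction is inherited from a single local solution via local uniqueness, whereas you concatenate at the partition points and check the matching of the time derivative at each junction directly; both mechanisms are valid.
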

\begin{proof}
    \par We only prove the case (ii): the pair of displacement $\lambda$-monotonicity conditions holds. The other case follows from the same argument with parallel theorems.
    \par We first show the existence. By Theorems \ref{propagation disp} and \ref{W2 displacement}, let $\tilde{L}_{\mu}^V$ be the a priori uniform $W_2$-Lipschitz constant for $\partial_x V$ with respect to $\mu$. Let $\delta>0$ be the constant established in Theorem \ref{W1 Lipschitz}, with $L_x^G$ replaced by $L_x^V$ and $\tilde{L}_{\mu}^G$ replaced by $\tilde{L}_{\mu}^V$. Let $0=T_0<T_1<\ldots<T_n=T$ be a partition of $[0,T]$ such that for $i=1,\cdots,n$, $T_i-T_{i-1}\leq\frac{\delta}{2}$.
    \par Since $T_n-T_{n-2}\leq\delta$, by Theorem \ref{W1 Lipschitz}, the master equation (\ref{master equation}) with terminal condition $G$ admits a unique classical solution on $[T_{n-2},T_n]$. By Theorems \ref{propagation disp} and \ref{W2 displacement}, $\partial_x V(t,\cdot,\cdot)$ is uniformly Lipschitz continuous in $\mu$ under $W_2$ for $t\in[T_{n-2},T_n]$, with the Lipschitz constant $\tilde{L}_{\mu}^V$. Moreover, by Theorem \ref{W1 Lipschitz}(ii), $\partial_{x}V(T_{n-1},\cdot,\cdot)$ is also uniformly Lipschitz continuous in $\mu$ under $W_1$. We next consider the master equation on $[T_{n-3},T_{n-1}]$ with terminal condition $V(T_{n-1},\cdot,\cdot)$. Note that $\partial_{x}V(T_{n-1},\cdot,\cdot)$ has the same uniform Lipschitz continuity with constants $L_{x}^V$ and $\tilde{L}_{\mu}^V$ again. Hence we can apply Theorem \ref{W1 Lipschitz} to obtain a classical solution $V$ on $[T_{n-3},T_{n-1}]$. This extends the classical solution of the master equation to $[T_{n-3},T_n]$. By repeating the above steps finite times, we obtain a classical solution $V$ to the master equation (\ref{master equation}) on $[0,T]$.
    \par The global uniqueness follows from the local uniqueness result. Specifically, suppose $V$ and $V'$ are two classical solutions to the master equation. By comparing them backward in time, starting from $[T_{n-1},T_n]$, the local uniqueness ensures that $V$ and $V'$ coincide on each small time interval. This implies the uniqueness on the entire interval $[0,T]$.
\end{proof}

\section*{Acknowledgments}Shuhui Liu was supported in part by Hong Kong RGC Grant P0031382/S-ZG9U and the Postdoc Matching Fund of PolyU Grant 1-W32B; Xintian Liu was supported in part by Hong Kong RGC Grant GRF 11311422 and Hong Kong RGC Grant GRF 11303223; Chenchen Mou was partially supported by NSFC grant 12522122, Hong Kong RGC Grant GRF 11311422 and Hong Kong RGC Grant GRF 11303223; and Defeng Sun was supported by the Research Center for Intelligent Operations Research and Hong Kong RGC Senior Research Fellow Scheme No. SRFS2223-5S02. 

\bibliographystyle{plain}
\bibliography{references}
\end{document}